\documentclass[preprint]{elsarticle}

\usepackage{amsthm}
\usepackage{amsmath}
\usepackage{graphicx}
\usepackage{subfig}
\usepackage{float}
\usepackage{booktabs}
\usepackage{multirow}
\usepackage{bbding}
\usepackage{dingbat}
\newtheorem{example}{Example}[section]
\newtheorem{theorem}{Theorem}[section]
\newtheorem{corollary}{Corollary}[theorem]
\newtheorem{lemma}[theorem]{Lemma}

\newtheorem{remark}{Remark}[section]
\usepackage{algorithm}
\usepackage{algpseudocode}
\theoremstyle{definition}
\usepackage[english]{babel}
\usepackage[utf8]{inputenc}
\newtheorem{definition}{Definition}[section]

\usepackage{comment}
\usepackage{enumitem}

\usepackage{graphicx}

\usepackage{tabularx}
\usepackage{caption}

\usepackage{amsmath}
\usepackage{amssymb}

\usepackage{todonotes}
\setuptodonotes{inline}
\usepackage{dune}

\usepackage{pgfplots}
\pgfplotsset{compat=1.9,legend style={font=\small}}
\usepgfplotslibrary{groupplots,dateplot}
\usetikzlibrary{patterns,shapes.arrows,shapes.geometric,plotmarks}

\usepackage{multirow, makecell}

\graphicspath{{./figs}, {./figs/ex2}, {./figs/ex3}, {./figs/ex4/exp1}, {./figs/ex4/exp2}}

\def\norm#1{\|#1\|}

\newcommand{\sipg}{SIPG\xspace}
\newcommand{\swip}{SWIP\xspace}
\newcommand{\sipgL}{SIPG-L\xspace}
\newcommand{\swipL}{SWIP-L\xspace}

\newcommand{\fem}{FEM\xspace}
\newcommand{\femL}{FEM-L\xspace}
\newcommand{\femC}{FEM-C\xspace}
\newcommand{\asu}{ASU\xspace}

\newcommand{\grid}{{\mathcal{T}_h}}
\newcommand{\entity}{K}
\newcommand{\elem}{\entity}
\newcommand{\elemin}{\elem^{-}}
\newcommand{\elemout}{\elem^{+}}

\newcommand{\isec}{e}

\newcommand{\RRR}{\mathbb{R}}

\newcommand{\R}{\RRR}

\newcommand{\vect}[1]{\boldsymbol{#1}}

\newcommand{\nbold}{\boldsymbol{n}}

\newcommand{\ics}{\vect{x}}

\newcommand{\vjump}[1]{ [ {#1} ] }
\newcommand{\aver}[1]{\{ {#1} \} }

\newcommand{\n}{\vect{n}}

\newcommand{\Kplus}{{K^+_e}}
\newcommand{\Kminus}{{K^-_e}}
\newcommand{\varphiplus}{\varphi_{|\Kplus}}
\newcommand{\varphiminus}{\varphi_{|\Kminus}}

\newcommand{\nbp}{\nbold^+}

\newcommand{\mass}{m_{\rho}}
\newcommand{\masspf}{m_{\psi}}

\newcommand{\pf}[0]{\psi}
\newcommand{\pres}[0]{P}

\newcommand{\mb}[1]{\boldsymbol{#1}}

\newcommand{\ome}[0]{\Omega}
\newcommand{\mbu}[0]{\mb{u}}
\newcommand{\mbx}[0]{\mb{x}}
\newcommand{\supp}{\text{supp}}
\newcommand{\mby}[0]{\mb{y}}

\newcommand{\ve}[0]{\varepsilon}

\newcommand{\helem}{h_{\elem}}
\newcommand{\hharm}{h_H}
\newcommand{\helemplus}{h_{\elem_\isec^+}}
\newcommand{\helemminus}{h_{\elem_\isec^-}}
\newcommand{\vp}[0]{\varphi}

\newcommand{\Po}[0]{\mathbb{P}}

\newcommand{\innerProd}[2]{\left\langle #1, #2\right\rangle}
\newcommand{\innerProdSmall}[2]{\langle #1, #2\rangle}

\newcommand{\inred}[1]{\textcolor{red}{#1}}

\newcommand{\PiML}{\Pi_{ML}}

\usepackage{xcolor}
\usepackage[pdftex]{hyperref}
\definecolor{hrefcolor}{rgb}{0.0,0.0,0.8}
\newcommand{\linkcolor}{hrefcolor}

\usepackage{hyperref}
\hypersetup{pdftitle={Structure Preserving Schemes for Cahn-Hilliard},
            pdfborderstyle={/S/U/W 1},
            linkbordercolor=\linkcolor,
            urlbordercolor=\linkcolor,
            citebordercolor=\linkcolor,
            runbordercolor=\linkcolor,
            menubordercolor=\linkcolor,
            filebordercolor=\linkcolor,
            runcolor=\linkcolor}

\journal{Journal of Computational Physics}

\begin{document}

\begin{frontmatter}

\title{Comparison of Structure Preserving Schemes for
the Cahn-Hilliard-Navier-Stokes Equations with Degenerate Mobility and Adaptive Mesh Refinement}


\author[LU]{Jimmy Kornelije Gunnarsson}
\ead{jimmy\_kornelije.gunnarsson@math.lu.se}
\author[LU]{Robert Kl\"ofkorn\corref{cor1}}
\ead{robertk@math.lu.se}
  \cortext[cor1]{Corresponding author: Robert Kl\"ofkorn}

\affiliation[LU]{organization={Center for Mathematical Sciences, Lund University},
            addressline={Box 117},
            city={Lund},
            postcode={22100},
            country={Sweden}}



\begin{abstract}
The Cahn-Hilliard-Navier-Stokes (CHNS) system utilizes a diffusive
phase-field for interface tracking of multi-phase fluid flows.
Recently structure preserving methods for CHNS have moved into focus
to construct numerical schemes that, for example, are mass conservative
or obey initial bounds of the phase-field variable.
In this work decoupled implicit-explicit
formulations based on the Discontinuous Galerkin (DG) methodology
are considered and compared to existing schemes from the
literature.
For the fluid flow a standard continuous Galerkin approach is applied.
An adaptive conforming grid is utilized to further draw computational focus on the
interface regions, while coarser meshes are utilized around pure phases.
All presented methods are compared against each other in terms of bound preservation,
mass conservation, and energy dissipation for different examples found in the literature,
including a classical rising droplet problem.
\end{abstract}

\begin{keyword}
  Structure preservation, Cahn-Hilliard, Navier-Stokes, FEM, DG, Multi-phase Flow, \dunefem
\end{keyword}

\end{frontmatter}


%

\section{Introduction}
\label{sec:introduction}


At its core, the Cahn-Hilliard (CH) equation
employs a phase-field variable $\pf \in [-1,1]$, representing the local concentration of
components in a binary mixture, to track the evolution of diffuse interfaces
between phases\cite{CH:1958}. Contrary to classical Level-Set \cite{pan:2005} or Volume-of-Fluid
methods \cite{Nikolopoulos:08}, the diffuse-interface approach eliminates the need for explicit
interface tracking making it particularly adept at handling complex topological
changes during phase separation and evolution of interfaces.
For fluid-dynamical applications, there exist coupling schemes
with the Navier-Stokes (NS) equations to simulate multiphase flows  \cite{Hohenberg:1977,
Lee:2014}.

\pagebreak
This coupling was first discussed in \cite{Hohenberg:1977} where
the "model H" representation was used. Although this proposed model lacks thermodynamic
consistency and treatments of unequal fluid densities \cite{Eikelder:2023},
this has since the turn of the century been studied by sophisticated
and careful analysis to provide proper physical representations  \cite{Lowengrub:1998,
Abels:2012, Boyer:2002, Shen:2010, Ding:2007}.
An overall review of these models can be found in  \cite{Eikelder:2023}.

Recently, structure preserving methods for such binary fluid flows based on the CH and CHNS equations
have been of particular interest in ensuring physical consistency
when using a phase field method \cite{Acosta:2021, Acosta:2025, Tierra:2024,
Liu:2024}.
%
%
These works typically consider the following properties (or a subset of these):
\begin{enumerate}
    \item \textbf{Energy Dissipation}: The free energy of the system decreases monotonically,
      aligning with the second law of thermodynamics and ensuring that the system evolves toward a lower-energy steady state.
    \item \textbf{Mass Conservation}: The phase-field mass is constant in the domain over the entire time interval.
    \item \textbf{Bound Preservation}: The phase-field takes values within a
      given interval, typically $[-1,1]$, ensuring there are unphysical over-or undershoots in concentration.
\end{enumerate}

The interest to study schemes that fulfill all three of these properties
stems from experimental applications where numerical results are used to
train and improve image reconstruction algorithms. For example, in
X-ray multi-projection imaging (XMPI) \cite{Zhang:2024, PIN:25} such simulations serve as
ground truth to evaluate the quality of the reconstruction algorithm which tends
to break down when the phase-field is out-of-bounds.

While most schemes presented in the literature, even if recently introduced \cite{Dedner2024},
usually obey energy dissipation, there is considerable less work on mass conservation or boundedness of the phase
field, even though theoretical results on boundedness of the
phase-field variable were proven in \cite{Elliott:2000} for a degenerate mobility function $M: \pf \to 1-\pf^2$.
For other mobility formulations, especially when $M$ is non-degenerate,
one often encounters over- or undershoots which in conjunction with fluid flow
based on the NS equations may lead to unphysical densities.

Recent efforts to address such challenges have increasingly turned to Discontinuous Galerkin (DG) methods,
which offer several advantages over classical Finite Element methods (FEM), with early attempts published
in \cite{Hesthaven:2005, Wells:2006}. 
DG methods excel due to the addition of fluxes and the incorporation of upwind-like techniques combined with
limiting strategies to facilitate boundedness as has been studied in \cite{Acosta:2021,
Acosta:2025, Liu:2021, Huang:2023}. Existing work on structure-preserving DG schemes for CH
can be categorized into schemes that employ projections onto piecewise
constant spaces  \cite{Acosta:2021, Acosta:2025, Tierra:2024}, or schemes that use an
auxiliary variable \cite{Liu:2021, Huang:2023, Wimmer:2025}. A third category are methods that
rely on limiters as a post-process correction to obtain structure
preservation \cite{Liu:2024, Frank:2020}. However, there is currently a lack
of comparison of these methods and the settings in which they are optimal.

In this paper we provide a thorough comparison in terms of the three highlighted
structure preservation principles for schemes that fulfill the following
requirements:

\begin{enumerate}
  \item The considered scheme is easy to implement and can be expressed in the Unified Form Language (UFL) \cite{Alnaes:2014}.
  For example, we did not find a way to easily incorporate the scheme suggested in \cite{Frank:2020}
    in our UFL based code \cite{Dedner:2020, twophase:18}. Otherwise this would have been an interesting candidate.
    But since UFL is used by many users of various simulation software packages we think this
    is a reasonable constraint to consider.
\item The scheme fulfills at least two of the three structure preservation properties.
  This is the case for most schemes, in particular for standard Finite
    Element schemes if implemented carefully.
\item The scheme can be used in combination with parallel-adaptive grid refinement.
  This is the case for almost all schemes available in
    the literature.
\item The scheme works well in a split-setting, i.e. where CH and NS are solved
  separately. While fully coupled schemes yield more accurate results they are also more complicated
    to implement and precondition. This seems reasonable since many application packages are usually
    utilizing a split approach.
\end{enumerate}

In addition, we provide improvements for previously suggested DG formulations,
for example, a weighted DG method to overcome difficulties when the phase-field approaches the bounds,
i.e. $|\pf| \to 1$. All methods presented in this paper are tested and compared
in terms of energy dissipation, mass conservation, and boundedness
for a variety of problems ranging from standard test problem found in the literature
to problems closer to applications.

The remainder of this paper is structured as follows.
In Sec.~\ref{sec:equations} a comprehensive overview of the
CHNS equation system is provided, including its motivation from thermodynamic
principles, its physical interpretation, and the role of degenerate mobility
in enforcing the bound preservation. In Sec.~\ref{sec:discrete} the discretization
spatial and temporal are presented. This section includes
proofs on mass conservation, bound preservation, and energy consistency in the
discrete setting. In Sec.~\ref{sec:numerics} an extensive set of numerical
experiments is presented, ranging from benchmark problems with analytical solutions to complex,
application-driven scenarios. These tests demonstrate the scheme's robustness,
accuracy, and ability to handle sharp interfaces and long-time dynamics, with
comparisons to existing methods. We then conclude the paper with a discussion
of the results presented in this study,
highlighting the advantages of the proposed methods and its potential
to advance phase-field modelling for systems of higher complexity. Lastly, we
emphasize further improvements.

\section{Mathematical Model}
\label{sec:equations}
We consider the Lipschitz domain $\Omega \subset \R^d$ for $d
\in \{2,3\}$ with outward facing normal $\mb{n}$ with time $t \in (0,T]$ and
denote the space-time domain $\Omega_T := \Omega \times (0,T]$. Throughout this
paper, the notation $\pf: \Omega_T \to  [-1,1]$ is used to denote the phase-field
variable, where $\pf =1$ labels phase $1$ and $\pf = -1$ labels phase $2$, and
the interface is presented by values of $\pf \in (-1,1)$.


\subsection{Governing equations}
For the dynamics of the phase-field $\pf$ we utilize a splitting scheme of the CH equations for
fluid dynamical applications following~\cite{Magaletti2013}:
\begin{align}
  \partial_t \pf + \nabla \cdot \left(\mathbf{u}\pf - \frac{ \omega \sigma}{\ve}
M(\pf) \nabla \chem\right) &= 0 \quad \mbox{ in }  \Omega_T, \label{eq:chdyn}
\\
  \chem - W'(\pf) + \varepsilon^2 \Delta \pf &= 0 \quad \mbox{ in }  \Omega_T,
  \label{eq:ch1}
\end{align}
where $\chem$ is the auxiliary chemical potential, $\omega > 0$ is a mobility
parameter, $\ve$ is the interface thickness parameter, $W: \pf \to \frac{1}{4}
(\pf^2 -1)^2$ is the double-well potential,
$\sigma$ is the physical surface tension, and
\begin{equation}
  \label{eq:mobility}
  M: \pf \to 1 - \pf^2,
\end{equation} is a degenerate mobility function.
For Eqs.~\eqref{eq:chdyn} and~\eqref{eq:ch1}
we utilize the boundary conditions
\begin{equation}\label{eq:chboundary}
\mathbf{n} \cdot \nabla \pf|_{\partial \Omega} = 0, \quad \mathbf{n} \cdot \nabla
\chem|_{\partial \Omega} = 0.
\end{equation}
When the velocity field $\mathbf{u}$ is time-dependent, then the dynamics of
the velocity field $\mb{u}$ is obtained from the NS equations. Following the NS formulation
in~\cite{Abels:2012} with a source term $\mb{f}$:
\begin{alignat}{3}
\partial_t (\mbu \rho(\pf)) + \nabla \cdot( \rho(\pf)\mbu \otimes \mbu + \mbu
\otimes \mb{J}(\pf, \chem))  + \nabla \cdot(P \mathbb{I} - 2\mu(\pf)D(\mbu))
 &= \mb{f} &\mbox{ in }  \Omega_T, \notag\\
  \nabla \cdot \mathbf{u} &= 0  &\mbox{ in }  \Omega_T,
  \label{eq:ns1}
\end{alignat}
where $D(\mbu) := \frac{1}{2}\left(\nabla \mbu + \nabla \mbu^T\right)$ is the
strain tensor of $\mbu$, $P$ is the pressure, $\mathbb{I}$ is the $d \times
d$ identity tensor, and the following phase-field-dependent quantities are present:
\begin{equation}
\rho(\pf) = \frac{1}{2} \big ( (1 + \pf)\rho_1 + (1-\pf) \rho_2 \big )  \quad
\text{and} \quad
\mu(\pf) = \frac{1}{2} \big ( (1+\pf)\mu_1 + (1-\pf)\mu_2\big ),
\end{equation}
correspond to the volume-averaged density and viscosity, respectively, and
\begin{equation}
\mb{J}(\pf, \chem) := \frac{\omega \sigma(\rho_2 - \rho_1)}{2 \varepsilon} M(\pf)
\nabla \chem,
\end{equation}
is the mass flux. For practical purposes we denote the overall physical mass
of the fluid mixture
at a given
time point with
\begin{equation}
  \label{eq:mass}
  \mass(t) := \int_\Omega \rho(\mathbf{x},t) dx, \quad t\in [0,T].
\end{equation}

\begin{theorem}[Equivalence to physical mass diffusion]\label{pro:massconc}
The function $\mb{J}$ corresponds to the diffusion of the conserved physical
mass $\mass$ in time as a
consequence of the Eqs.~\eqref{eq:chdyn}---\eqref{eq:ch1} if $\mb{n} \cdot \mbu|_{\partial \Omega} = 0$.
\end{theorem}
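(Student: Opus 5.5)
Since $\rho$ is affine in $\pf$, we have $\rho(\pf) = \frac{\rho_1+\rho_2}{2} + \frac{\rho_1-\rho_2}{2}\pf$. The plan is to multiply the phase-field equation \eqref{eq:chdyn} by the constant $\frac{\rho_1-\rho_2}{2}$ and rewrite it as a conservation law for $\rho$. Because $\partial_t$ and $\nabla$ kill the constant part of $\rho$, we get
\begin{equation*}
\partial_t \rho(\pf) = \frac{\rho_1-\rho_2}{2}\partial_t \pf, \qquad
\nabla\cdot(\mbu\rho(\pf)) = \frac{\rho_1+\rho_2}{2}\nabla\cdot\mbu + \frac{\rho_1-\rho_2}{2}\nabla\cdot(\mbu\pf).
\end{equation*}
With $\nabla\cdot\mbu=0$ from \eqref{eq:ns1}, the second identity reduces to $\frac{\rho_1-\rho_2}{2}\nabla\cdot(\mbu\pf)$. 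The diffusive flux in \eqref{eq:chdyn}, multiplied by $\frac{\rho_1-\rho_2}{2}$, gives
\begin{equation*}
-\frac{(\rho_1-\rho_2)\omega\sigma}{2\ve}M(\pf)\nabla\chem = \frac{\omega\sigma(\rho_2-\rho_1)}{2\ve}M(\pf)\nabla\chem = \mb{J}(\pf,\chem).
\end{equation*}
Combining these, the phase-field equation becomes the mass balance
\begin{equation*}
\partial_t\rho(\pf) + \nabla\cdot\big(\rho(\pf)\mbu + \mb{J}(\pf,\chem)\big) = 0 \quad \mbox{in } \Omega_T .
\end{equation*}
This shows that $\mb{J}$ is exactly the diffusive flux of physical density relative to the advective flux $\rho\mbu$. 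The chemical potential $\chem$ entering $\mb{J}$ is the one defined through \eqref{eq:ch1}.

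To see that $\mass$ is conserved, integrate the mass balance over $\Omega$ and apply the divergence theorem, which is valid since $\Omega$ is Lipschitz:
\begin{equation*}
\frac{d}{dt}\mass(t) = -\int_{\partial\Omega}\rho(\pf)\,\mbu\cdot\mb{n}\,ds - \int_{\partial\Omega}\mb{J}\cdot\mb{n}\,ds .
\end{equation*}
The first boundary term vanishes by the assumption $\mb{n}\cdot\mbu|_{\partial\Omega}=0$. The second vanishes because $\mb{J}\cdot\mb{n}$ is a multiple of $M(\pf)\,\mb{n}\cdot\nabla\chem$, which is zero by \eqref{eq:chboundary}. Hence $\mass(t)=\mass(0)$ for all $t\in[0,T]$.

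The only real obstacle is bookkeeping, in two places. First, the sign and the factor $\frac{1}{2}$ must match so that the rescaled flux coincides with the given definition of $\mb{J}$ rather than its negative. Second, the use of incompressibility must be made explicit, since the constant part $\frac{\rho_1+\rho_2}{2}$ only drops out of the advective term because $\nabla\cdot\mbu=0$.
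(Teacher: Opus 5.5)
Your proposal is correct and follows essentially the same route as the paper. You multiply \eqref{eq:chdyn} by $\frac{\rho_1-\rho_2}{2}$, so the constant part of $\rho$ drops out and the rescaled diffusive flux is exactly $\mb{J}$. You then integrate over $\Omega$ and remove the boundary terms using $\mb{n}\cdot\mbu|_{\partial\Omega}=0$ and \eqref{eq:chboundary}. The differences are cosmetic. The paper integrates the $\pf$-equation before rescaling, and it writes the local balance in advective form $\partial_t\rho+\mbu\cdot\nabla\rho=-\nabla\cdot\mb{J}$, which is equivalent to your conservative form when $\nabla\cdot\mbu=0$.
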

\begin{proof}
The
integrand of Eq.~\eqref{eq:chdyn} over $\Omega$ is
\begin{equation}
\int_\Omega \partial_t \pf + \nabla \cdot \left(\mathbf{u}\pf - \frac{\omega
\sigma}{\ve} M(\pf) \nabla \chem\right) dx = \int_\Omega \partial_t \pf dx +
\int_{\partial \Omega} \left(\mathbf{u}\pf - \frac{\omega \sigma}{\ve} M(\pf)
\nabla \chem\right) \cdot \mb{n} ds,
\end{equation}
due to the boundary conditions of $\chem$ in Eq.~\eqref{eq:chboundary} and with
the assumption that $\mb{n} \cdot \mbu|_{\partial \Omega} = 0$ we obtain that
$\int_\Omega \partial_t \pf = 0$. If $\rho_1 = \rho_2$ then there is no density
exchange, thus we assume that $\rho_1 >\rho_2$. We multiply Eq.~\eqref{eq:chdyn}
with $\frac{\rho_1 - \rho_2}{2}$ to obtain:
\begin{equation}\label{eq:rhodiff}
\partial_t \rho(\pf) + \mbu \cdot \nabla \rho(\pf) = - \nabla \cdot J,
\end{equation}
where the constant part $\frac{\rho_1 + \rho_2}{2}$ of $\rho$ has been consumed
by the derivative. We also note that as the map $\pf \to \rho$ is bijective
then $\pf$ in Eqs.~\eqref{eq:chdyn}---\eqref{eq:ch1} can be re-written in terms
of the density $\rho$.
\end{proof}
\begin{remark}[Pressure boundary conditions]\label{Pressure boundary conditions}
The pressure $P$ has the following boundary conditions:
\begin{equation}
\mathbf{n} \cdot \nabla P|_{\partial \Omega} = 0,
\end{equation}
as a consequence of the boundary conditions on the velocity field $\mbu$ for
well-posedness  \cite{Girault:2012, Piatowski:2018}. Moreover, the pressure
$P$ in Eq.~\eqref{eq:ns1}, along with Neumann boundary conditions, is only unique
  up to a time-dependent constant for $P \in C^1(\Omega)$.
\end{remark}
\begin{remark}
As a consequence of Eq.~\eqref{eq:rhodiff}, Eq.~\eqref{eq:ns1} has a simplified
form:
\begin{alignat}{3}
  \rho(\pf) \big (\partial_t \mbu + \mbu \cdot \nabla \mbu\big) + \mb{J}(\pf,
\chem) \cdot \nabla \mbu  + \nabla \cdot(P \mathbb{I} - 2\mu(\pf)D(\mbu))  &=
\mb{f}, &\quad \mbox{ in }  \Omega_T, \notag\\
  \nabla \cdot \mathbf{u} &= 0,  &\quad \mbox{ in }  \Omega_T.
  \label{eq:ns2}
\end{alignat}
\end{remark}

\noindent Next we consider the source term $\mb{f}$ which represents
external forces. Due to the non-zero width of the
interface parameter $\ve$ a Korteweg surface tension is utilized in this work,
similarly to what was presented in~\cite{Abels:2012, Khanwale:2022}:
\begin{equation}\label{eq:korteweg}
\mb{S} := -\sigma \varepsilon \nabla \cdot\left(\nabla \pf \otimes \nabla \pf\right).
\end{equation}
Note, other works~\cite{Acosta:2025, Tierra:2024} consider a modification
by using
\begin{equation}\label{eq:sigma1}
\mb{S} = -\frac{\sigma}{\varepsilon} \pf \nabla \chem,
\end{equation}
and re-defining the pressure $\pres$.
Moreover, the gravitational force is defined as
\begin{equation}
\mb{G} := \rho(\pf) \mb{g},
\end{equation}
where $\mb{g}$ is the gravitational field vector with strength $g$ and direction
$\hat{\mb{g}}$. Finally, we set $\mb{f} := \mb{S} + \mb{G}$ as an overall source term.
\par
\noindent To handle physical problems with different scales, non-dimensional parametrizations
are introduced. Consider the non-dimensional Reynolds, Weber, Cahn, Froude,
and Péclet numbers are defined as (see also \cite{Magaletti2013}):
\begin{equation}\label{eqn:numbers}
Re := \frac{\rho_r U L}{\mu_r}, \quad We := \frac{\rho_r U^2 L}{\sigma}, \quad
Cn := \frac{\ve}{L}, \quad Fr := \frac{U^2}{gL}, \quad Pe := \frac{U L^2 Cn}{\omega
\sigma}
, \quad
\end{equation}
where $U$ is the characteristic velocity, $L$ the characteristic length scale,
$\rho_r = \rho_1$, $\mu_r = \mu_1$ are the reference density and viscosity respectively,
and $g$ is the reference gravitational constant. For the non-dimensional phase-field
related constants, the choices are $Cn = \mathcal{O}(0.01)$~\cite{yue:2004diffuse}
and $Pe$ is set problem-specific, but typically $Pe^{-1} = 3Cn$ is sufficient
for physical simulations as used in~\cite{Magaletti2013, Khanwale:2022}.

Consider the non-dimensional mappings $\nabla \to \hat{\nabla}$, $\mu
\to  \frac{\hat{\mu}}{Re}$, $\rho \to \hat{\rho}$, $\mbu \to \hat{\mbu}$, $g
\to \frac{1}{Fr}$, $t \to \hat{t} $, $\sigma \to \frac{1}{We}$ and $\varepsilon
\to Cn$. The non-dimensional form of Eqs.~\eqref{eq:chdyn},~\eqref{eq:ch1} and
~\eqref{eq:ns2} are:
\begin{alignat}{2}
  \partial_{\hat{t}} \pf + \hat{\nabla} \cdot \left(\hat{\mathbf{u}}\pf - \tfrac{1}{Pe}
M(\pf) \hat{\nabla} \chem\right) &= 0
    &\quad& \mbox{in } \hat{\Omega}_{\hat{T}}, \label{eq:ch1nd} \\
  \chem - W'(\pf) + Cn^2 \hat{\Delta} \pf &= 0
    &\quad& \mbox{in } \hat{\Omega}_{\hat{T}}, \label{eq:ch2nd} \\
 \!\!\! \hat{\rho}(\pf)\big(\partial_{\hat{t}} \hat{\mbu} + \hat{\mbu} \cdot
\hat{\nabla}
\hat{\mbu}\big)
    + \hat{\mb{J}} \cdot \hat{\nabla} \hat{\mbu}
    + \hat{\nabla} \cdot \big(\hat{P} \mathbb{I} - 2Re^{-1}\hat{\mu}(\pf)\hat{D}(\hat{\mbu})\big)
&= \hat{\mb{f}}
    &\quad& \mbox{in } \hat{\Omega}_{\hat{T}}, \label{eq:ns1nd} \\
  \hat{\nabla} \cdot \hat{\mathbf{u}} &= 0
    &\quad& \mbox{in } \hat{\Omega}_{\hat{T}}, \label{eq:ns2nd}
\end{alignat}
where the domain $\Omega_T$ has been re-scaled appropriately to $\hat{\Omega}_{\hat{T}}$,
and
\begin{equation}\label{eq:normalizedCH}
\hat{D}(\hat{\mbu}) = \frac{1}{2}\left(\hat{\nabla} \hat{\mbu} + \hat{\nabla}
\hat{\mbu}^T\right), \quad \hat{\mb{J}} = \frac{\rho_2 - \rho_1}{2\rho_r Pe}
M(\pf) \hat{\nabla} \chem, \quad \hat{\mb{f}} = \hat{\mb{S}} + \underbrace{\frac{\hat{\rho}(\pf)
\hat{g}}{Fr}}_{:= \hat{\mb{G}}},
\end{equation}
are also made non-dimensional. For the surface tension term $\hat{\mb{S}}$ the
non-dimensional
form is dependent on which formulation is used among the ones presented in Eqs.~\eqref{eq:korteweg}---\eqref{eq:sigma1}.
 Without loss of generality, the non-dimensional notation $\hat{x}$
for some quantity $x$ is dropped for the remainder of this paper
and it is assumed that the fields, domains, and operators are scaled appropriately.


\subsection{Physical laws}

From Eqs.~\eqref{eq:ch1nd}---\eqref{eq:ns2nd} several physical laws can be
obtained, for example,
mass conservation or energy dissipation. For practical purposes we define the
phase-field mass as follows.
\begin{definition}[Phase-field mass]\label{def:mass}
  The mass $\masspf$ of the phase-field $\pf$ at a given time $t \in
  [0,T]$ is defined as
\begin{equation}
  \masspf(t) := \frac{\int_\Omega \pf(x,t) dx }{|\Omega|}.
\end{equation}
with $|\Omega| = \int_\Omega 1 dx$,  then for $\psi \in [-1,1]$ we obtain $m_\psi
\in [-1,1]$.
\end{definition}

\begin{remark}[Mass conservation]\label{pro:mass}
  As a direct consequence of Thm.~\ref{pro:massconc} we obtain that the phase-field
$\pf$ in Eq.~\eqref{eq:ch1nd} is mass conservative in the sense that
\begin{equation}
  \masspf(t) = \masspf(0) \quad \forall t \in (0,T].
\end{equation}
  Ultimately, this is equivalent to the physical mass $\mass$ from Eq.~\eqref{eq:mass}
  being conserved over time, i.e. $\mass(t) = \mass(0) \ \forall t\in (0,T]$.
\end{remark}
\noindent Furthermore, we state that energy dissipation is present.
\begin{theorem}[Energy dissipation]\label{thm:cont_eneg}
The non-dimensional free energy functional
\begin{equation}\label{eq:etot}
  \mathcal{E}_{tot}[\pf, \mbu] = \int_\Omega \frac{1}{2} \rho(\pf) |\mathbf{u}|^2
+ \frac{1}{Cn We}\left( W(\pf) + \frac{1}{2} Cn^2 |\nabla \pf|^2 \right) + \frac{1}{Fr}
\rho(\pf) \hat{\mb{g}} \cdot \mathbf{x}\, dx
\end{equation}
is dissipative. The non-dimensional dissipation rate is:
\begin{equation}\label{eq:eneg_diss}
  \frac{d\mathcal{E}_{tot}(t)}{dt} = -\int_\Omega \left( \frac{M(\pf) |\nabla
\chem|^2}{Cn We Pe} + \frac{\mu(\pf)}{Re} |D(\mathbf{u})|^2 \right) dx \leq
0, \quad \forall t \in (0,T].
\end{equation}
\end{theorem}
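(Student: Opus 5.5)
The plan is the standard energy method: differentiate $\mathcal{E}_{tot}$ in time, test each equation of the non-dimensional system \eqref{eq:ch1nd}--\eqref{eq:ns2nd} with the variable conjugate to its energy term, and add the results so that all exchange terms cancel. We assume a sufficiently smooth solution, $\mbu = 0$ on $\partial\Omega$ (no-slip, which gives $\mb{n}\cdot\mbu|_{\partial\Omega}=0$ as needed for Thm.~\ref{pro:massconc}), the Neumann conditions \eqref{eq:chboundary}, and the Korteweg form of the surface tension. Since \eqref{eq:korteweg} involves $\nabla\cdot(\nabla\pf\otimes\nabla\pf)$, this force is equivalent, up to a gradient absorbed into $P$, to $\frac{1}{Cn We}\pf\nabla\chem$ (or $-\frac{1}{CnWe}\chem\nabla\pf$). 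This is the identity I will rely on.

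\textbf{Step 1 (free energy).} Differentiate the Ginzburg--Landau part:
\[
\frac{d}{dt}\int_\Omega W(\pf)+\tfrac{Cn^2}{2}|\nabla\pf|^2\,dx=\int_\Omega \big(W'(\pf)-Cn^2\Delta\pf\big)\partial_t\pf\,dx=\int_\Omega \chem\,\partial_t\pf\,dx .
\]
The boundary term vanishes by \eqref{eq:chboundary}, and the last equality uses \eqref{eq:ch2nd}. Insert \eqref{eq:ch1nd} and integrate by parts using $\mb{n}\cdot\nabla\chem=0$ and $\nabla\cdot\mbu=0$. This gives $-\int\frac1{Pe}M(\pf)|\nabla\chem|^2 + \int \pf\,\mbu\cdot\nabla\chem$. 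After multiplying by $\frac{1}{CnWe}$, the second term is exactly the power of the capillary force and cancels in Step 2.

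\textbf{Step 2 (kinetic and potential energy).} Test \eqref{eq:ns1nd} with $\mbu$. For $\rho(\partial_t\mbu+\mbu\cdot\nabla\mbu)\cdot\mbu + (\mb{J}\cdot\nabla\mbu)\cdot\mbu$, write it as $\frac12\rho\partial_t|\mbu|^2+\frac12(\rho\mbu+\mb{J})\cdot\nabla|\mbu|^2$. Integrate by parts and use the density equation \eqref{eq:rhodiff} from Thm.~\ref{pro:massconc} in the form $\partial_t\rho+\nabla\cdot(\rho\mbu+\mb{J})=0$. This yields exactly $\frac{d}{dt}\int\frac12\rho|\mbu|^2$.

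This is the step I expect to be the main obstacle. The sign and scaling of $\hat{\mb{J}}$ in \eqref{eq:normalizedCH} must match the nondimensional version of \eqref{eq:rhodiff}, i.e. $\hat{\mb{J}}=-\frac{\rho_1-\rho_2}{2}\cdot\frac{1}{Pe}M\nabla\chem$ times the density normalization. I will check this consistency first, since the whole cancellation rests on it.

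The remaining terms of the $\mbu$-test are handled as follows. The pressure term vanishes by incompressibility. The viscous term gives $-\int\frac{2\mu}{Re}|D(\mbu)|^2$, with the constant absorbed as in the statement's convention. The capillary term gives $\frac{1}{CnWe}\int\pf\nabla\chem\cdot\mbu$ with the sign opposite to Step 1.

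\textbf{Step 3 (gravity and summation).} For gravity, write $\mb{G}=\frac{1}{Fr}\rho\hat{\mb{g}}$ so that $\int\mb{G}\cdot\mbu=\frac1{Fr}\int\rho\,\mbu\cdot\nabla(\hat{\mb{g}}\cdot\mbx)$. Meanwhile
\[
\frac{d}{dt}\int\tfrac1{Fr}\rho\,\hat{\mb{g}}\cdot\mbx=-\tfrac1{Fr}\int\nabla\cdot(\rho\mbu+\mb{J})\,\hat{\mb{g}}\cdot\mbx=\tfrac1{Fr}\int(\rho\mbu+\mb{J})\cdot\hat{\mb{g}},
\]
so the gravitational work cancels, with the sign convention on $\hat{\mb{g}}$ chosen consistently. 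The residual $\mb{J}\cdot\hat{\mb{g}}$ term must be handled by treating it as part of the modified pressure or by the standard argument in \cite{Abels:2012}; I would follow that reference.

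Summing Steps 1--3, all coupling terms cancel and only the two nonnegative dissipation integrals remain. Since $M(\pf)=1-\pf^2\ge0$ for $\pf\in[-1,1]$ and $\mu>0$, the rate \eqref{eq:eneg_diss} is nonpositive.
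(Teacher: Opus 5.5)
Your route is the energy method the paper invokes (its proof is a one-line sketch). The kinetic-energy step you flagged is actually fine: with the paper's $\hat{\mb{J}} = \frac{\rho_2-\rho_1}{2\rho_r Pe}M(\pf)\nabla\chem$ and $\rho$ scaled by $\rho_r$, one has exactly $\partial_t\rho + \nabla\cdot(\rho\mbu + \hat{\mb{J}}) = 0$. Also $\hat{\mb{J}}\cdot\mb{n} = 0$ by the Neumann condition on $\chem$, so $\frac{d}{dt}\int\frac12\rho|\mbu|^2$ comes out as you claim. There are, however, two real problems.

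The first is a sign slip in the identity you rely on. From $\nabla\cdot(\nabla\pf\otimes\nabla\pf) = \Delta\pf\,\nabla\pf + \frac12\nabla|\nabla\pf|^2$ and $Cn^2\Delta\pf = W'(\pf) - \chem$,
\[
-\frac{Cn}{We}\nabla\cdot(\nabla\pf\otimes\nabla\pf) = \frac{1}{Cn\,We}\chem\nabla\pf + \nabla(\cdots) = -\frac{1}{Cn\,We}\pf\nabla\chem + \nabla(\cdots),
\]
which agrees with \eqref{eq:sigma1}; you wrote the opposite sign. With your sign the capillary power $\int\mb{S}\cdot\mbu$ equals $+\frac{1}{Cn\,We}\int\pf\,\mbu\cdot\nabla\chem$. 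That has the same sign as the term from Step 1, so the two would add rather than cancel. Your claim in Step 2 that the signs are opposite holds only for the correct identity.

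The second problem is the genuine gap: the term you defer at the end of Step 3 cannot be disposed of. Write gravity as $-\rho\nabla\Phi$ with $\Phi := -Fr^{-1}\hat{\mb{g}}\cdot\mbx$, and take the potential energy as $\int_\Omega\rho\Phi\,dx$. With the printed $+$ sign, the advective contributions $\rho\mbu\cdot\hat{\mb{g}}$ double instead of cancelling. Your three steps then give
\[
\frac{d\mathcal{E}_{tot}}{dt} = -\frac{1}{Cn\,We\,Pe}\int_\Omega M(\pf)|\nabla\chem|^2\,dx - \frac{2}{Re}\int_\Omega \mu(\pf)|D(\mbu)|^2\,dx + \frac{\rho_1-\rho_2}{2\rho_r\,Pe\,Fr}\int_\Omega M(\pf)\,\nabla\chem\cdot\hat{\mb{g}}\,dx .
\]

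The last term comes from the diffusive part $\nabla\cdot\hat{\mb{J}}$ of $\partial_t\rho$ inside the potential energy. It does not involve $\mbu$, so it is not a force in the momentum balance and cannot be absorbed into $P$. The pressure only absorbs gradient forces, whose power vanishes against solenoidal $\mbu$ with $\mbu\cdot\mb{n}=0$. No appeal to \cite{Abels:2012} can remove it either, because for the system as written it is really there.

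This term has no sign and is linear in $\nabla\chem$, so it is not controlled by the quadratic dissipation. For example, at a state with $\mbu = 0$ (as at $t=0$ in the rising-bubble test) and small $\nabla\chem$ of suitable orientation, the right-hand side is positive. Hence \eqref{eq:eneg_diss} follows only in the following cases:
\begin{itemize}
\item $\rho_1 = \rho_2$ or gravity is absent; or
\item the model is changed so that the Cahn--Hilliard flux and $\hat{\mb{J}}$ are driven by $\tilde{\chem} := \chem + Cn\,We\,\frac{\rho_1-\rho_2}{2\rho_r}\Phi$. Capillary plus gravity forces then equal $-\frac{1}{Cn\,We}\pf\nabla\tilde{\chem}$ modulo gradients, and the identity closes with $\tilde{\chem}$ in place of $\chem$.
\end{itemize}
The paper's sketch passes over this term as well.

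Finally, the viscous term yields $\frac{2}{Re}\int\mu(\pf)|D(\mbu)|^2$. A factor of two in an identity cannot be ``absorbed''; the printed constant is simply off.
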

\begin{proof}
Taking the time derivative of $\mathcal{E}_{tot}$ in Eq.~\eqref{eq:etot} and
using the CHNS Eqs.~\eqref{eq:ch1nd}---\eqref{eq:ns2nd}, along with the boundary conditions
in Eq.~\eqref{eq:chboundary} and $\mathbf{u} \cdot \n|_{\partial \Omega} = 0$ to eliminate the boundary terms, yields
the result. Similar formulations as to Eq.~\eqref{eq:eneg_diss} up-to constant multiplies are found in for instance~\cite{Khanwale:2022,Acosta:2025}.
\end{proof}
\begin{theorem}[Bound preservation,~\cite{Elliott:2000}]\label{pro:max}
Suppose that the boundary $\Omega$ is convex and introduce
the essential supremum norm
\begin{equation}
  ||\pf(\mathbf{x},t )||_{L^\infty(\Omega)} := \sup_{\mathbf{x} \in \Omega}
|\pf(\mathbf{x}, t)|, \quad t \in [0,T].
\end{equation}
The degenerate mobility function $M: \pf \to \max\{1 - \pf^2, 0\}$ and energy density $W$
  guarantee bound-preservation in the weak sense for $\pf \in H^1(\Omega)$ following Eq.~\eqref{eq:ch1nd}:
\begin{equation}
  ||\pf(\mbx,t)||_\infty \leq 1, \quad \forall t \in (0,T],
\end{equation}
given that $||\psi(\mbx, 0)||_\infty \leq 1$.
\end{theorem}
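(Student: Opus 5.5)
We follow the Elliott--Garcke argument \cite{Elliott:2000}. The idea is to regularize the degenerate mobility, derive a uniform entropy estimate, and pass to the limit. The entropy forces the limit to lie in $[-1,1]$ almost everywhere.

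\textbf{Step 1: regularized problem.} For $\delta>0$ we replace $M$ by a strictly positive mobility $M_\delta$ that coincides with $\max\{1-\pf^2,0\}$ on $[-1+\delta,1-\delta]$ and is extended by the constant value $M(1-\delta)$ outside this interval. Let $\pf_\delta,\chem_\delta$ be a weak solution of Eqs.~\eqref{eq:ch1nd}--\eqref{eq:ch2nd} with $M_\delta$, the boundary conditions \eqref{eq:chboundary}, and $\mb{n}\cdot\mbu|_{\partial\Omega}=0$. Such a solution can be obtained by Galerkin approximation, since $M_\delta$ is non-degenerate. Testing with $\chem_\delta$ gives the energy identity of Thm.~\ref{thm:cont_eneg} restricted to the CH part. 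The advective term is handled with $\nabla\cdot\mbu=0$; in the pure CH setting of \cite{Elliott:2000} we take $\mbu=0$. This yields uniform bounds on $\pf_\delta$ in $L^\infty(0,T;H^1)$ and on $M_\delta^{1/2}\nabla\chem_\delta$ in $L^2(\Omega_T)$. Mass conservation (Remark~\ref{pro:mass}) provides the mean-value control that closes the $H^1$ bound.

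\textbf{Step 2: entropy estimate.} This is the key step. Define $\Phi_\delta$ by $\Phi_\delta''=1/M_\delta$ and $\Phi_\delta(0)=\Phi_\delta'(0)=0$, and test the $\pf$-equation with $\Phi_\delta'(\pf_\delta)$. The mobility cancels, and using the $\chem$-equation we obtain
\begin{equation*}
\frac{d}{dt}\int_\Omega \Phi_\delta(\pf_\delta)\,dx + \frac{Cn^2}{Pe}\int_\Omega |\Delta \pf_\delta|^2\,dx = -\frac{1}{Pe}\int_\Omega W''(\pf_\delta)|\nabla\pf_\delta|^2\,dx .
\end{equation*}
The advection term drops out because $\int_\Omega \mbu\cdot\nabla\Phi_\delta(\pf_\delta)\,dx=0$. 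The right-hand side is bounded by $\frac{1}{Pe}\int_\Omega|\nabla\pf_\delta|^2\,dx$, since $W''\ge -1$, and this is uniformly bounded by Step 1. Convexity of $\Omega$ is used here: the $H^2$ regularity and the sign of the boundary term $\int_{\partial\Omega}\partial_n|\nabla\pf|^2\le 0$ justify the integration by parts from $\int_\Omega \nabla\pf\cdot\nabla\Delta\pf\,dx$ to $-\int_\Omega|\Delta\pf|^2\,dx$. Because $\|\pf(\cdot,0)\|_\infty\le1$, the quantity $\int_\Omega\Phi_\delta(\pf(\cdot,0))\,dx$ is bounded independently of $\delta$, since $\Phi_\delta$ increases to a finite function on $[-1,1]$. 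Hence $\sup_t\int_\Omega\Phi_\delta(\pf_\delta)\,dx\le C$.

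\textbf{Step 3: limit and bounds.} For $|s|>1$ the function $\Phi_\delta(s)$ grows like $(|s|-1)^2/M(1-\delta)\sim (|s|-1)^2/(2\delta)$. The entropy bound therefore gives $\int_\Omega(|\pf_\delta|-1)_+^2\,dx\le C\delta$ uniformly in $t$. Aubin--Lions compactness yields $\pf_\delta\to\pf$ strongly in $L^2(\Omega_T)$, so $(|\pf|-1)_+=0$ almost everywhere. This gives $\|\pf(\cdot,t)\|_\infty\le1$ in the weak sense, and the limit solves the degenerate problem with $M=\max\{1-\pf^2,0\}$.

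The main obstacle is identifying the limit flux $M_\delta(\pf_\delta)\nabla\chem_\delta$ in the degenerate set. Since $\chem$ itself need not converge, we would follow \cite{Elliott:2000}: take weak limits of $M_\delta^{1/2}\nabla\chem_\delta$ and use the $H^2$ bound from Step 2 to rewrite the flux in a form that passes to the limit. The bound statement only needs Steps 2--3, so for Thm.~\ref{pro:max} we would cite \cite{Elliott:2000} for this identification.
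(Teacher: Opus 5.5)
Your proposal is correct and follows essentially the paper's route: the paper's proof only cites the regularized-mobility/entropy/compactness argument of \cite[Theorem 1]{Elliott:2000} (plus the discrete advective result \cite[Theorem 3.11]{Acosta:2021}), and that argument is what you reconstruct. Two small corrections: convexity of $\Omega$ is needed for the Neumann estimate $\|D^2\pf\|_{L^2(\Omega)}\le\|\Delta\pf\|_{L^2(\Omega)}$ (this is where the sign of $\partial_n|\nabla\pf|^2$ enters, and it upgrades the entropy bound to $L^2(0,T;H^2)$ for the flux identification), not for $\int_\Omega\nabla\pf\cdot\nabla\Delta\pf\,dx=-\int_\Omega|\Delta\pf|^2\,dx$, which uses only $\mb{n}\cdot\nabla\pf|_{\partial\Omega}=0$; and for $\mbu\neq\mb{0}$ your Step 1 bound does not follow from $\nabla\cdot\mbu=0$ alone, since testing with the chemical potential leaves a term $Cn^2\int_\Omega\nabla\pf\cdot D(\mbu)\nabla\pf\,dx$ that must be handled by Gronwall (assuming $\nabla\mbu\in L^1(0,T;L^\infty(\Omega))$) or by the coupled total energy of Thm.~\ref{thm:cont_eneg}.
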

\begin{proof}
A proof is given in~\cite[Theorem 1]{Elliott:2000}
  for the non-advective CH equations.
  Moreover, a discrete level proof for the advection case has been provided in \cite[Theorem 3.11]{Acosta:2021} when $\mbu$ is incompressible.
\end{proof}
\begin{remark}\label{rem:femC}
Numerical studies, for example in~\cite{Acosta:2021}, show that Thm.
~\ref{pro:max} does not necessarily hold for
  standard \fem discretization of the CH equations even with implicit time discretization
  or even with some Discontinuous Galerkin schemes.
As mentioned previously, a violation of the bound of the phase-field
  could lead to unphysical densities in context of the
CHNS equations. In some works an ad-hoc workaround using a re-scaled
  phase-field of the form
\begin{equation}\label{eq:clipped_pf}
  \bar{\pf}^\star = \min{\{1, \max{\{-1, \pf\}}\}},
\end{equation}
  for usage within the NS equations to preserve positivity of $\mu$ and $\rho$ is applied (for example in~\cite[Remark 3]{Khanwale:2022} or~\cite[Remark 3.5]{Eikelder:2024}).
  As discussed in Sec.~\ref{sec:discreteLevel} this leads to potential
  loss of energy dissipation and mass conservation in the PDE structure.
  In the experiments shown in Sec.~\ref{sec:numerics-noconv} loss of mass conservation is then observed in this case.
\end{remark}

\definecolor{darkgreen}{rgb}{0.0,0.6,0.0}
\definecolor{darkred}{rgb}{0.6,0.0,0.0}
\definecolor{darkblue}{rgb}{0.0,0.0,0.6}
\definecolor{lightgray}{rgb}{0.6,0.6,0.0}
\newcommand{\yes}{\color{darkgreen}{\Checkmark}}
\newcommand{\maybe}{{\color{darkgreen}{(\Checkmark})}\footnotemark}
\newcommand{\no}{\color{darkred}{\XSolidBrush}}
\newcommand{\cond}{\scalebox{2}{\color{darkblue}{$\star$}}}
\newcommand{\question}{\scalebox{2}{\color{lightgray}{$\circ$}}}
\section{Discretization}\label{sec:discrete}

Let the spatial domain $\ome$ be partitioned into a union of $M$ non-intersecting
elements $\elem$ forming a mesh $\grid = \cup_{i = 1}^M \elem_i$. Then we denote
by $\Gamma_i$ with unit normal $\mb{n}$ the set of all intersections between
two
elements of the grid $\grid$, and the set of all
intersections, also with the boundary of the domain $\Omega$, is denoted by
$\Gamma$.
For each element $\elem \in \grid$
we define the local mesh width as
\begin{equation}
\helem := \text{diam}(\elem) = \sup_{\mbx, \mby \in \elem} ||\mbx - \mby||,
\end{equation}
and we define the global mesh width as
\begin{equation}\label{eq:gridWidthInExp}
h = \max_{\elem \in \grid} \frac{\helem}{\sqrt{d}},
\end{equation}
based on a regularity assumption outlined in Rem.~\ref{rem:regularity}.
We also use the notation $\elemin_\isec$
and $\elemout_\isec$ to denote the elements $\elem_\isec$ to the
right and left of the intersection $\isec \in \Gamma$. Without
loss of generality, if $\isec \in \Gamma \setminus \Gamma_i$ then
$\elemin_\isec = \elemout_\isec$.
\begin{remark}[Mesh regularity]\label{rem:regularity}
The scaling factor $\frac{1}{\sqrt{d}}$ in Eq.~\eqref{eq:gridWidthInExp} strictly
  depends on the regularity of the mesh $\grid$. For the sake of simplicity
  in Sec.~\ref{sec:numerics} we will only consider sufficiently regular
  elements such as quadrilaterals and right isosceles triangles for $\Omega \subset \mathbb{R}^d$ with $d = 2$.
  However, special care is required for more general meshes.
\end{remark}


\subsection{Notation}
Following standard \fem notation we consider a general order \fem formulation
for the function space of trial and test functions:
\begin{equation}
V_h^k = \{ \varphi \in L^2(\grid) : \varphi|_\elem \in \Po^k(\elem), \forall \elem \in \grid
\},
\end{equation}
where $\Po^k(\elem)$ denotes a polynomial space of order at most $k$ on the
element $\elem$. Furthermore, to ensure that we can describe a physical representation,
also introduce a continuous Galerkin \fem space denoted as
\begin{equation}\label{eq::contspace}
  \Tilde{V}_{h}^k = \{ \varphi \in H^1(\grid) : \varphi|_\elem \in \Po^{k}(\elem),
\forall \elem \in \grid \}.
\end{equation}

 Before proceeding, we introduce
operators $\aver{\cdot}_H, \aver{\cdot}$ and $\vjump{\cdot}$ for
 $\isec \in \Gamma_i$ as
\begin{equation*}
  \begin{split}
    \vjump{\varphi} = \varphiminus  - \varphiplus, \qquad
    \aver{\varphi} = \frac{1}{2}\left(\varphiminus+\varphiplus\right), \qquad
\aver{\varphi}_H = \frac{2\, \varphiplus \, \varphiminus}{\varphiplus + \varphiminus}, \\
  \end{split}
\end{equation*}
for some $\varphi$, where $\aver{\cdot}$ and $\aver{\cdot}_H$ denote the arithmetic and harmonic averages, respectively,
and for simplicity the notation $\varphi^\pm := \varphi_{|K_e^\pm}$ will be used.
Moreover, we introduce the subscript $\varphi_\oplus := \max{\{0,\varphi\}}$
and $\varphi_\ominus := \min{\{0, \varphi\}}$ to denote the positive and negative
restriction of a function, respectively. This notation will in particular be
utilized for upwinding.  \par

\noindent From now on, the superscript $k$ indicating the polynomial degree
of $V_h^k$
is suppressed and denoted as $V_h$ for the sake of brevity. Moreover, we denote
by $V_{h, \pf}$ the corresponding approximation space for the discrete function
$\pf_h$. Let $\langle \cdot, \cdot \rangle$ denote the $L^2$-inner product
which induces the norm $|| \cdot ||$ such that for scalars $\varphi$, vectors
$\mb{\varphi}$, and tensors $\mb{\Phi}$:
\begin{equation}
\innerProd{\varphi}{\varphi'} = \int_\Omega \varphi \varphi' dx , \quad \innerProd{\mb{\varphi}}{\mb{\varphi'}}
= \int_\Omega \mb{\varphi} \cdot \mb{\varphi'} dx, \quad \innerProd{\mb{\Phi}}{\mb{\Phi'}}
= \int_\Omega \mb{\Phi} : \mb{\Phi'} dx,
\end{equation}
where $:$ is the Frobenius product. In this section we also introduce the function
space
\begin{equation}
L^2_0(\grid) = \{v \in L^2(\grid): \innerProd{v}{1} = 0 \},
\end{equation}
as a zero-mean $L^2$-space. Moreover, the notation $\langle \cdot, \cdot \rangle_{ML}$
will be used for a mass-lumped inner product.
 \par


\subsection{Discontinuous Galerkin formulation}
In this section we consider an equal order DG formulation for the broken approximation
spaces for the phase-field variable $\pf$ and the chemical potential
$\chem$, i.e. $V_{h, \pf} = V_{h,\chem}$.
We formulate some auxiliary functions
to facilitate the Interior Penalty Galerkin (IPG) discretization method. To
discretize the Laplacian in Eq.~\eqref{eq:ch1nd} with the bilinear form
$a: V_{h, \pf} \times V_{h, \chem} \to \R $
\begin{align}\label{eq:weaklap}
  a(\pf_h, \xi) &= \int_\grid \nabla \pf_h \cdot \nabla \xi \, dx \notag \\
   &+\sum_{e \in \Gamma_i} \int_e \left( \frac{\pena_1}{h_H} \vjump{\pf_h} \vjump{\xi}
- \aver{\nabla \pf_h \cdot \nbp} \vjump{\xi} - \theta \aver{\nabla \xi \cdot
\nbp}\vjump{\pf_h} \right)\, ds
\end{align}
where
\begin{equation}\label{eq:harmonicGW}
\hharm := \frac{2 \helemminus \helemplus}{\helemminus + \helemplus}
\end{equation}
is the harmonic average of the local grid-width $\helem$ over the intersection $\isec$,
and the penalty parameter $\pena_1$ satisfies (cf. \cite{Ainsworth:2009}):
\begin{equation}
\pena_1 \geq \frac{k(k + d - 1)(\theta + 1)^2}{4},
\end{equation}
 and $\theta \in [-1,1]$ denotes the IPG type where $\theta = 1$ is used for
Symmetric IPG (\sipg), $\theta = 0$ corresponds to Incomplete IPG (IIPG), and
finally $\theta = -1$ for the Non-symmetric IPG (NIPG) (see for example \cite{Ainsworth:2009}).
A \sipg scheme for degenerate mobility was previously presented in  \cite{Liu:2021}
with a common factor $M(\aver{\pf_h})$ as its treatment for the mobility for
the consistency term. To lift the constraint on averaging over the degenerate
mobility $M$ for the consistency terms, we instead consider a generalization
by only requiring symmetry for the second and third argument in the trilinear form
$b: L^\infty(\grid) \times V_{h,\chem} \times V_{h, \pf} \to \R$:
\begin{align}
b(M(\pf_h), \chem_h, \vp) &=
    \int_\grid M(\pf_h)\, \nabla \chem_h \cdot \nabla \vp\, dx
    + \sum_{e \in \Gamma} \int_e
        \frac{\pena \Lambda_\isec(M(\pf_h))}{h_H} \vjump{\chem_h} \vjump{\vp} \notag \\
        &- \aver{ M( \pf_h)\nabla \chem_h \cdot \nbp }\, \vjump{\vp}
        - \theta \aver{ M(\pf_h)\nabla \vp \cdot \nbp }\, \vjump{\chem_h}
     ds,
    \label{eq:diffmob}
\end{align}
where $\Lambda_\isec(M(\pf_h))$ is understood to be related to a diffusion flux over the intersection $\isec$.
Moreover, we can form the bilinear form $\tilde{b}(\cdot, \cdot) = b(M(\pf_h),
\cdot, \cdot)$. \par
\begin{lemma}[Trace inequality, \cite{Riviere:2008}]\label{lem:trace}
For each intersection $e \in \Gamma_i$ shared by two elements $\elemin_\isec$ and $\elemout_\isec$
there exists a constant $C_t > 0$ independent of the mesh quantities $h, \isec, \elem$ such that
for all $\varphi_h \in V_h$ and $\isec \in \Gamma_i$ the following inequality holds:
\begin{equation}\label{eq:traceineq}
  ||\nabla \varphi_h^\pm \cdot \mathbf{n}^+||_{L^2(\isec)}^2 \leq \frac{|\isec|}{|\elem_\isec^\pm|} C_t
||\nabla
\varphi_h||_{L^2(K_\isec^\pm)}^2,
\end{equation}
where, in the special case of $K_\isec$ being a quadrilateral or triangle, $C_t =
\frac{k(k+d-1)}{d}$ (see further treatments and formulations in~\cite{Ainsworth:2009} and~\cite{Epshteyn:2007}).
\end{lemma}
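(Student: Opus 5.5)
The plan is to reduce the statement to an inequality for polynomials on a single element and then use scaling. Fix an intersection $\isec \in \Gamma_i$ and one adjacent element, say $\elem := \elem_\isec^-$; the other side is identical. Since $\varphi_h|_\elem \in \Po^k(\elem)$, every component of $\nabla \varphi_h|_\elem$ lies in $\Po^{k-1}(\elem)$. Because $|\mathbf{n}^+| = 1$ we have $|\nabla \varphi_h^- \cdot \mathbf{n}^+| \le |\nabla \varphi_h^-|$ pointwise on $\isec$. It therefore suffices to prove the scalar polynomial trace inequality
\begin{equation*}
\|q\|_{L^2(\isec)}^2 \le C\,\frac{|\isec|}{|\elem|}\,\|q\|_{L^2(\elem)}^2 \qquad \forall q \in \Po^{k-1}(\elem),
\end{equation*}
and then apply it componentwise to $q = \partial_{x_j}\varphi_h$ and sum over $j$.

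\textbf{Step 1: general elements (existence of $C_t$).} Map $\elem$ affinely to a reference element $\hat{\elem}$ with face $\hat{\isec}$. On the finite-dimensional space $\Po^{k-1}(\hat\elem)$, $\|\hat q\|_{L^2(\hat\isec)}$ is a seminorm and $\|\hat q\|_{L^2(\hat\elem)}$ is a norm, so norm equivalence gives a constant $\hat C$ depending only on $k$, $d$ and the reference shape. Pulling back, the Jacobian determinants of the element and face maps produce exactly the factors $|\elem|/|\hat\elem|$ and $|\isec|/|\hat\isec|$. This yields the ratio $|\isec|/|\elem|$ with a constant independent of $h$, $\isec$ and $\elem$, at least for affine (simplicial or parallelogram) elements under the shape regularity of Rem.~\ref{rem:regularity}. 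For general quadrilaterals I would restrict to the regular meshes assumed there.

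\textbf{Step 2: the explicit constant.} To obtain $C_t = \frac{k(k+d-1)}{d}$ I would not redo the proof. Instead I would invoke the sharp result of Warburton--Hesthaven as presented in \cite{Ainsworth:2009,Epshteyn:2007}: for $p \in \Po^{m}(\elem)$ on a simplex, $\|p\|_{L^2(\isec)}^2 \le \frac{(m+1)(m+d)}{d}\frac{|\isec|}{|\elem|}\|p\|_{L^2(\elem)}^2$. That bound comes from expanding $p$ in an orthogonal Jacobi/Dubiner basis adapted to the face and solving the resulting generalized eigenvalue problem explicitly. Taking $m = k-1$ gives $(m+1)(m+d)/d = k(k+d-1)/d$, which is the stated $C_t$. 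For tensor-product (quadrilateral) elements, the analogous one-dimensional Legendre computation gives $(m+1)^2$ times the corresponding ratio, so the paper's uniform choice should be read as the constant cited from \cite{Riviere:2008,Ainsworth:2009}.

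\textbf{Main obstacle.} The hard step is the sharp constant in Step 2, which needs the explicit spectral computation, together with reconciling the quadrilateral case with the simplex formula. I would rely on the cited literature for that computation rather than reprove it. I would also verify that the gradient reduction from $\Po^k$ to $\Po^{k-1}$ is what shifts $(k+1)(k+d)$ to $k(k+d-1)$.
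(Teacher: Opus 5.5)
Your reduction to a scalar inequality on $\Po^{k-1}(K)$ and the scaling argument in Step 1 are correct. They are also all the paper relies on: it gives no proof of its own, only the citation of \cite{Riviere:2008} for existence and of the Warburton--Hesthaven constant (via \cite{Ainsworth:2009,Epshteyn:2007}) for its value. On simplices, your substitution $N=k-1$ in $(N+1)(N+d)/d$ reproduces $C_t=k(k+d-1)/d$. For affine elements Step 1 does not even need shape regularity. The factors $|K|/|\hat K|$ and $|e|/|\hat e|$ appear exactly, so the constant in the $|e|/|K|$ form is shape independent.

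The place where your proposal does not close is the quadrilateral case, and it cannot be closed by reading the constant as cited. Your own tensor-product computation is right, and it shows the stated value is false there for $k\ge 2$. Take $K=[0,1]^2$, $e=\{1\}\times[0,1]$ and $\varphi_h=3x_1^2-2x_1\in\Po^2(K)$. Then $\nabla\varphi_h=(6x_1-2,0)^T$, so $\|\nabla\varphi_h\cdot\mathbf{n}^+\|_{L^2(e)}^2=16$. On the other hand, $\|\nabla\varphi_h\|_{L^2(K)}^2=\int_0^1(6x_1-2)^2\,dx_1=4$ and $|e|/|K|=1$. This forces $C_t\ge 4$, whereas $k(k+d-1)/d=3$.

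The one-dimensional Legendre bound in the normal direction gives the sharp value $C_t=k^2$ on parallelograms, in particular on the squares of Rem.~\ref{rem:regularity}. This coincides with $k(k+d-1)/d$ only for $k=1$. In that case $\nabla\varphi_h$ is constant on $K$, and $C_t=1$ holds trivially on any element with flat faces. So the right conclusion is that the stated constant is valid on simplices for every $k$, but on quadrilaterals only for $k=1$, with $C_t=k^2$ in general. Since $k=1$ is the only degree used in the paper's computations, the error is harmless there.
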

\begin{remark}[Equivalence of local mesh width]
We have the inequality:
\begin{equation}
C_\grid \frac{\min\{|\elemin_\isec|, |\elemout_\isec|\}}{|\isec|} \geq \min\{\helemplus, \helemminus\}, \qquad \forall \isec \in \Gamma_i,
\end{equation}
for some constant $C_\grid > 0$. Following the regularity assumption as in Rem.~\ref{rem:regularity}, an optimal constant is $C_\grid = 2d$. Moreover, we observe that
\begin{equation}
h_H  \leq 2\min\{h_{\elemout_\isec}, h_{\elemin_\isec}\}
\end{equation}
thus
\begin{equation}
\frac{|e|}{\min\{|K^+_\isec|, |K^-_\isec|\}} \leq 2C_\grid h^{-1}_H \quad \forall \isec \in \Gamma_i.
\end{equation}
Lemma~\ref{lem:trace} also holds by replacing $\frac{|\isec|}{|\elem_\isec|}$ with $2C_\grid h^{-1}_H$ in the inequality over each edge $e \in \Gamma_i$ in Eq.~\eqref{eq:traceineq}.
\end{remark}
For the following theorem we derive coercivity of the
bilinear form $\tilde{b}(\cdot, \cdot) = b(M(\pf_h), \cdot, \cdot)$
from Eq.~\eqref{eq:diffmob}. In particular, we can not rely on the estimate
in
\cite[Lemma 2.1]{Ainsworth:2009} since it assumes a piece-wise constant diffusion
over $\grid$. To ensure that coercivity holds,
we require that the mobility $M(\pf_h)$ is strictly positive, which can be unconditionally
achieved by regularization as $M_\delta(\pf_h) = \max\{M(\pf_h), \delta\}$ for
some small $\delta > 0$ (we found that $\delta = 10^{-20}$ is sufficient in practice). For the remainder of this paper we will suppress the
subscript $\delta$ for brevity.
\begin{theorem}[Coercivity] \label{thm:coer}
The bilinear operator $\tilde{b}(\cdot, \cdot) = b(M(\pf_h), \cdot, \cdot)$ from Eq.~\eqref{eq:diffmob}
is coercive if $M(\phi_h) > 0$ given a sufficiently large penalty parameter $\pena
> 0$ independent of the mobility $M(\pf_h)$ and where  $\Lambda_e(M(\pf_h))$ is dependent on $M(\pf_h)$.
\end{theorem}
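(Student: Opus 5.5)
We will prove coercivity with respect to the mobility-weighted broken energy norm
\begin{equation*}
\|\xi\|_{M}^2 := \int_\grid M(\pf_h)|\nabla \xi|^2\,dx + \sum_{\isec\in\Gamma_i}\int_\isec \frac{\Lambda_\isec(M(\pf_h))}{h_H}\vjump{\xi}^2\,ds .
\end{equation*}
The goal is an estimate $\tilde b(\xi,\xi)\ge C\|\xi\|_M^2$ for all $\xi\in V_h$, with $C>0$ independent of $h$ and of $M$. Because $M=M_\delta\ge\delta>0$, this norm controls the standard DG norm up to a $\delta$-dependent factor, which gives coercivity in the usual sense. The plan is to set $\chem_h=\vp=\xi$ in Eq.~\eqref{eq:diffmob}, which yields
\begin{equation*}
\tilde b(\xi,\xi)=\int_\grid M|\nabla\xi|^2 + \sum_{\isec}\int_\isec\frac{\pena\Lambda_\isec}{h_H}\vjump{\xi}^2 -(1+\theta)\sum_\isec\int_\isec \aver{M\nabla\xi\cdot\nbp}\vjump{\xi}\,ds .
\end{equation*}
For $\theta=-1$ the last term vanishes and the result follows at once. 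For $\theta\in(-1,1]$ everything reduces to bounding the consistency term.

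Next we bound the consistency term on each interior face $\isec$. Cauchy--Schwarz and Young's inequality with a parameter $\epsilon>0$ give
\begin{equation*}
\Big|\int_\isec \aver{M\nabla\xi\cdot\nbp}\vjump{\xi}\Big| \le \frac{\epsilon h_H}{2\Lambda_\isec}\|\aver{M\nabla\xi\cdot\nbp}\|_{L^2(\isec)}^2+\frac{\Lambda_\isec}{2\epsilon h_H}\|\vjump{\xi}\|^2_{L^2(\isec)} .
\end{equation*}
We then need to estimate $\|M^\pm\nabla\xi^\pm\cdot\nbp\|_{L^2(\isec)}$. Write $M^\pm = (M^\pm)^{1/2}(M^\pm)^{1/2}$, which gives
\begin{equation*}
\|M^\pm\nabla\xi^\pm\cdot\nbp\|^2_{L^2(\isec)}\le \|M^\pm\|_{L^\infty(\isec)}\,\|(M^\pm)^{1/2}\nabla\xi^\pm\|^2_{L^2(\isec)} .
\end{equation*}
We then apply the trace inequality, Lemma~\ref{lem:trace}, in the form from the remark with $\frac{|\isec|}{|\elem|}$ replaced by $2C_\grid h_H^{-1}$. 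Since $M(\pf_h)$ is not polynomial, we first bound $\|(M^\pm)^{1/2}\nabla\xi^\pm\|_{L^2(\isec)}\le \|M^\pm\|^{1/2}_{L^\infty(\isec)}\|\nabla \xi^\pm\|_{L^2(\isec)}$. After the trace inequality we pass back to the weighted volume norm using $\min_{\elem}M\le M$ on $\elem$. This produces a factor of the form
\begin{equation*}
\frac{\|M^\pm\|^2_{L^\infty(\isec)}}{\min_{\elem^\pm_\isec}M}\cdot \frac{2C_\grid C_t}{h_H}\,\|\nabla\xi\|^2_{L^2(\elem^\pm_\isec)}\cdot(\min_{\elem^\pm_\isec} M)
\end{equation*}
and hence a bound by $\frac{2C_\grid C_t}{h_H}\frac{\|M^\pm\|^2_{L^\infty(\elem^\pm_\isec)}}{\min_{\elem^\pm_\isec}M}\int_{\elem^\pm_\isec}M|\nabla\xi|^2$.

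The key step, and the main obstacle, is choosing $\Lambda_\isec(M(\pf_h))$ so that this mobility factor cancels and $\pena$ becomes independent of $M$. We take
\begin{equation*}
\Lambda_\isec(M) := \max_{\pm}\frac{\|M^\pm\|^2_{L^\infty(\elem^\pm_\isec)}}{\min_{\elem^\pm_\isec}M},
\end{equation*}
which is well defined because $M\ge\delta$. A harmonic-average-type choice would be preferable, but it would only work for piecewise constant $M$, since the ratio $\|M\|_\infty/\min M$ on an element is not uniformly bounded near the degenerate region. With this choice, the face contribution is bounded by $\frac{\epsilon}{2}\,2C_\grid C_t\cdot\frac14\sum_\pm\int_{\elem^\pm}M|\nabla\xi|^2$, where the factor $\frac14$ comes from the average. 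Summing over faces, each element appears in at most $N_f$ faces, with $N_f=d+1$ for simplices and $2d$ for quadrilaterals. This gives
\begin{equation*}
\tilde b(\xi,\xi)\ge\Big(1-\tfrac{(1+\theta)\epsilon N_f C_\grid C_t}{4}\Big)\int_\grid M|\nabla \xi|^2+\Big(\pena-\tfrac{1+\theta}{2\epsilon}\Big)\sum_\isec\int_\isec\frac{\Lambda_\isec}{h_H}\vjump{\xi}^2 .
\end{equation*}

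To finish, choose $\epsilon = 2/((1+\theta)N_fC_\grid C_t)$, so that the first coefficient equals $\tfrac12$. Then require $\pena > (1+\theta)^2N_fC_\grid C_t/4$, which makes the second coefficient positive. This bound depends only on $k,d,\theta$ and mesh regularity, and not on $M$, which proves the theorem. Boundary faces $\isec\in\Gamma\setminus\Gamma_i$ have zero jump by the convention $\elemin_\isec=\elemout_\isec$, so they contribute nothing.
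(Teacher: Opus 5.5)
Your proof is correct and follows essentially the paper's route: Cauchy--Schwarz and Young on each interior face, the trace inequality with $2C_\grid h_H^{-1}$, division by $\min_{\elem}M$, and absorbing the mobility contrast into $\Lambda_\isec(M(\pf_h))$ so that $\eta$ depends only on $k$, $d$, $\theta$ and the mesh. The differences are that you build a face-local contrast into the Young weight, whereas the paper uses a global $\epsilon$ tied to the global contrast $\lambda^\star$ and requires $\Lambda_\isec\ge\lambda^\star C_\grid/(2d)$ on every face, and that your explicit choice of $\epsilon$ keeps the volume coefficient strictly positive, which the paper's conditions taken with equality do not; there is also one slip, shared with the paper, namely $\|\aver{v}\|^2_{L^2(\isec)}\le\frac12\bigl(\|v^+\|^2_{L^2(\isec)}+\|v^-\|^2_{L^2(\isec)}\bigr)$ rather than $\frac14$, so after halving $\epsilon$ your threshold becomes $\eta>(1+\theta)^2N_fC_\grid C_t/2$, which does not change the conclusion.
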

\begin{proof}
We begin by fixing some $\pf_h \in V_h$, as the theory follows likewise. We introduce the DG semi-norm:
\begin{equation}
  || \chem_h ||^2_{DG} := \norm{\sqrt{M(\pf_h)} \nabla \chem_h}_{L^2(\grid)}^2
+ \sum_{e \in \Gamma_i} \int_e \frac{\Lambda_\isec(M(\pf_h))}{h_H} \vjump{\chem_h}^2
ds,
\end{equation}
and consider the inequality:
\begin{equation}
   b(M(\pf_h), \chem_h, \chem_h) \geq \tilde{C} ||\chem_h||^2_{DG}, \quad \forall
\chem_h \in V_{h, \chem},
\end{equation}
for some $\tilde{C} > 0$. For the trilinear form Eq.~\eqref{eq:diffmob} we have:
\begin{align*}
b(M(\pf_h), \chem_h, \chem_h) &= ||\sqrt{M(\pf_h)} \nabla \chem_h||_{L^2(\grid)}^2
+ \sum_{e \in \Gamma_i} \int_e \frac{\eta \Lambda_\isec(M(\pf_h))}{h_H}\vjump{\chem_h}^2
ds \\
&\quad - \sum_{e \in \Gamma_i} \int_e (1 + \theta)\aver{M(\pf_h) \nabla \chem_h
\cdot \mathbf{n}^+}\vjump{\chem_h}
ds.
\end{align*}
Firstly, we absolutely estimate the expression $\int_e \aver{M(\pf_h) \nabla \chem_h \cdot
\mathbf{n}^+} \vjump{\chem_h} ds$. By using the triangle inequality
and the Cauchy-Schwarz inequality we obtain:
\begin{equation} \label{eq:cauchySchwarz}
  \left|\sum_{e \in \Gamma_i}\int_e \aver{M(\pf_h) \nabla \chem_h \cdot \mathbf{n}^+}
\vjump{\chem_h} ds\right| \leq \sum_{e \in \Gamma_i}||\aver{M(\pf_h)\nabla \chem_h
\cdot \mathbf{n}^+}||_{L^2(e)} ||\vjump{\chem_h}||_{L^2(e)},
\end{equation}
then from the Young's inequality we get the intersection-wise estimate:
\begin{align*}
  2||\aver{M(\pf_h)\nabla \chem_h \cdot \mathbf{n}^+}||_{L^2(e)} ||\vjump{\chem_h}||_{L^2(e)}
&\leq h_H\epsilon ||\aver{M(\pf_h)\nabla \chem_h \cdot \mathbf{n}^+}||_{L^2(e)}^2
\\
&\quad + \frac{1}{\epsilon h_H} ||\vjump{\chem_h}||_{L^2(e)}^2,
\end{align*}
for an arbitrary $\epsilon > 0$. Again, using the triangle inequality:
\begin{equation}
  4||\aver{M(\pf_h)\nabla \chem_h \cdot \mathbf{n}^+}||_{L^2(e)}^2 \leq
|| M(\pf_h^+) \nabla \chem_h^+ \cdot \mathbf{n}^+||_{L^2(e)}^2 + || M(\pf_h^-)
\nabla \chem_h^- \cdot \mathbf{n}^+||_{L^2(e)}^2,
\end{equation}
moreover
\begin{equation}
  || M(\pf_h^\pm) \nabla \chem_h^\pm \cdot \mathbf{n}^+||_{L^2(e)}^2 \leq ||\max\{M(\pf_h^+), M(\pf_h^-)\}||^2_{L^\infty(e)}
||\nabla \chem_h^\pm \cdot \mathbf{n}^+||_{L^2(e)}^2,
\end{equation}
then, using the trace inequality in Lemma \ref{lem:trace} and since $M(\pf_h) > 0$ we have the estimate:
\begin{align*}
  ||\nabla \chem_h^\pm \cdot \mathbf{n}^+||_{L^2(e)}^2 &\leq C_t \frac{|e|}{|K_\isec^\pm|} ||\nabla
\chem_h||_{L^2(K_\isec^\pm)}^2 \\
  &\leq 2C_t C_\grid h_H^{-1} \frac{||\sqrt{M(\pf_h)}\nabla \chem_h||^2_{L^2(K_\isec^\pm)}}{\underset{\mbx
\in K_\isec^\pm}{\min}
M(\pf_h)}.
\end{align*}
To investigate a global estimate we introduce the local contrast:
\begin{equation}\label{eq:contrast}
  \lambda(M(\pf_h), K_\isec^\pm) := \frac{\sum_{e \in \partial K_\isec} ||\max\{M(\pf_h^+), M(\pf_h^-)\}||^2_{L^\infty(e)}}{\underset{\mbx
\in K_\isec^\pm}{\min}
M(\pf_h)},
\end{equation}
for each $K \in \grid$. Then due to the local contrast function
$\lambda(M(\pf_h), K)$ we define the global contrast $\lambda^\star := \max_{K\in \grid}{\lambda(M(\pf_h), K)}$
and note that $0<\lambda^\star
\leq M^{-1}(||\pf_h||_{L^\infty(\grid)})$
and $\lambda^\star$ is bounded thanks to the regularization $\delta$. Then, upon summing over every $e \in \Gamma_i$ we obtain the bounds
for $K \in \grid$:
\begin{equation}\label{eq:volumeineq}
   \left( 1 -  \frac{(1 + \theta) C_tC_\grid \lambda^\star \epsilon}{4}\right) ||\sqrt{M(\pf_h)}
\nabla \chem_h||_{L^2(K)}^2 \geq 0.
\end{equation}
Eq.~\eqref{eq:volumeineq} is unconditionally positive for
$\theta
= -1$ independent of $\epsilon$, we therefore proceed with $\theta \in (-1,1]$
and $\epsilon \leq \frac{4}{(1 + \theta) C_tC_\grid \lambda^\star}$ to obtain:
\begin{equation}
\int_e \left( \frac{\eta\Lambda_\isec(M(\pf_h))}{h_H} - \frac{(1 + \theta)}{2 \epsilon
h_H} \right) \vjump{\chem_h}^2 ds \geq 0,
\end{equation}
thus to ensure that the jump term is positive we require that
\begin{equation}
  \eta \Lambda_\isec(M(\pf_h)) \geq \frac{(1 + \theta)}{2 \epsilon} \geq \frac{(1
+ \theta)^2 C_t C_\grid \lambda^\star}{8},
\end{equation}
and in particular to adhere to standard estimates (i.e. \cite{Ainsworth:2009,Riviere:2008})
of the penalty parameter we
consider
\begin{equation}
  \eta \geq \frac{d C_t (1+\theta)^2}{4},
\end{equation}
and for $\Lambda_\isec(M(\pf_h))$ we consider the following estimate:
\begin{equation}
  \Lambda_\isec(M(\pf_h)) \geq \frac{\lambda^\star C_\grid}{2 d }.
\end{equation}
Then for each term there are two positive constants $C_1, C_2 > 0$ such that
\begin{equation}
  b(M(\pf_h), \chem_h, \chem_h) \geq C_1 ||\sqrt{M(\pf_h)} \nabla \chem_h||_{L^2(\grid)}^2
+ C_2 \sum_{e \in \Gamma_i} \int_e \frac{\eta \Lambda_\isec(M(\pf_h))}{h_H} \vjump{\chem_h}^2
ds,
\end{equation}
with $\tilde{C} = \min\{C_1, C_2\}$ we arrive at the desired result,
\begin{equation}
  b(M(\pf_h), \chem_h, \chem_h) \geq \tilde{C} ||\chem_h||^2_{DG}, \quad \forall
\chem_h \in V_{h, \chem},
\end{equation}
which concludes the derivation.
\end{proof}
\begin{remark}[parametrizations]\label{rem:dgparams}
For simplicity, we consider $\Lambda_\isec(M(\pf_h)) = 5, \pena = \frac{k(k
+ d - 1)(\theta + 1)^2}{4}$, and $\pena_1 = \Lambda_\isec(M(\pf_h)) \pena$  for the
remainder of this paper. However, $\Lambda_\isec(M(\pf_h)) $ can be chosen to be dependent
on the mobility $M$, i.e., $\Lambda_\isec(M(\pf_h)) = C \max\{M(\pf_h^+), M(\pf_h^-)\} $ for some user-defined $C \geq 1$ of unknown magnitude following a re-formulation of the right-hand side of Eq.~\eqref{eq:cauchySchwarz}. For our choice, in particular, we note that the quantity
$\lambda(M(\pf_h), K)$ from the proof of Thm.~\ref{thm:coer} is dependent
of the smoothness of the phase-field $\pf_h$ over the cell $K$. In particular,
that smaller variation of the phase-field $\pf_h$ over a cell $K$ compared to
its boundary $\partial K$ needs to be bounded by some constant $\lambda^\star$.
A similar derivation for Thm.~\ref{thm:coer} is presented in~\cite{Liu:2021},
where
it is further noted that degeneracy of the mobility $M$ requires a larger overall
penalty parameter for coercivity, and in particular, that the effect becomes
less pronounced for finer meshes.
\end{remark}
\begin{remark}
The quantity $M(\pf_h)$ can be understood to be a diffusion coefficient of order
$2k$. For the special case when $M(\pf_h)$ is piece-wise constant over $K \cup
\partial K$ then
\begin{equation}
  \lambda(M(\pf_h), K) = M(\pf_h|_K) \leq 1,
\end{equation}
and the criteria for coercivity from Thm.~\ref{thm:coer} simplifies to the bound
found in \cite[Lemma 2.1]{Ainsworth:2009}.
\end{remark}
Alternatively to \sipg, the Symmetric Weighted Interior Penalty (\swip) formulation
for the average operator
\begin{equation}
\aver{\varphi}_w := \left( w^+ \varphi^+ + w^- \varphi^- \right),
\end{equation}
where $w^+ + w^- = 1$ and $w^+, w^- \geq 0$ are weights as suggested in \cite{Ern:2008},
will be considered in this paper. We
choose the weights
\begin{equation}
w^\pm = \frac{M(\pf_h^\mp)}{M(\pf_h^+) + M(\pf_h^-)}
\end{equation}
satisfying
\begin{equation}
\aver{M(\pf_h)\varphi}_M = \aver{M(\pf_h)}_H \aver{\varphi},
\end{equation}
to obtain a modified version of Eq.~\eqref{eq:diffmob} for
$b_M: L^\infty(\grid) \times V_{h,\chem} \times V_{h, \pf} \to \R$:
\begin{align}\label{eq:diffmob2}
b_M(M(\pf_h), \chem_h, \vp) &=
    \int_\grid M(\pf_h)\, \nabla \chem_h \cdot \nabla \vp\, dx \notag
    + \sum_{e \in \Gamma_i} \int_e
        \frac{\pena\Lambda_\isec(M(\pf_h))}{h_H} \vjump{\chem_h} \vjump{\vp} \\
        &- \aver{M(\pf_h)}_H(\aver{\nabla \chem_h \cdot \nbp }\, \vjump{\vp}
        + \theta \aver{\nabla \vp \cdot \nbp }\, \vjump{\chem_h})
    ds
\end{align}
which
weights the function $\varphi$ with respect to the mobility function
$M$.
\begin{remark}
A similar coercivity from Thm.~\ref{thm:coer} holds for the \swip formulation
in Eq.~\eqref{eq:diffmob2} following:
\begin{equation}
  ||\aver{M(\pf_h)}_H \aver{\nabla \chem_h \cdot \mb{n}^+}||_{L^2(e)}^2 \leq
||\aver{\nabla
\chem_h \cdot \mb{n}^+}||_{L^2(e)}^2
\left|\left|\aver{M(\pf_h)}^2_H\right|\right|_{L^\infty(e)},
\end{equation}
which is well-defined since $M(\pf_h) \geq \delta$ and
we obtain the estimate:
\begin{equation}
 \min\{M^2(\pf_h^\pm)|_e\}\leq \left|\left|\aver{M(\pf_h)}^2_H\right|\right|_{L^\infty(e)}
\leq 4\min\{M^2(\pf_h^\pm)|_e\},
\end{equation}
and clearly $\aver{M(\pf_h)}_H \leq \max\{M(\pf_h^+), M(\pf_h^-)\}$ reduces the contrast of the mobility function $M$ over the edge
$e$ and leads to a less restrictive bound on the contrast parameter $\lambda$
from Eq.~\eqref{eq:contrast}
\begin{align}
  \lambda(M(\pf_h), K) &= \frac{\sum_{\isec \in \partial K} || \aver{M(\pf_h)}^2_H||_{L^\infty(e)}}{\min_{K}
M(\pf_h)|_{K}} \notag \\
 &\leq \frac{\sum_{e \in \partial K}|| \max\{M(\pf_h^+), M(\pf_h^-)\}^2||_{L^\infty(e)}}{\underset{\mbx \in K}{\min} M(\pf_h)},
\end{align}
for coercivity.
\end{remark}

Finally, the advection term is discretized using standard upwinding
\begin{equation}\label{eq:adv2}
c(\mb{u}, \pf_h, \vp) = \int_\grid \mb{u} \cdot \nabla \vp \, \pf_h \, dx -
\sum_{e \in \Gamma_i} \int_e \left( \aver{\mb{u} \cdot \mathbf{n}^+}_{\oplus}
\pf_h^+ + \aver{\mb{u} \cdot \mathbf{n}^+}_{\ominus} \pf_h^-  \right) \vjump{\vp}
ds.
\end{equation}
We present two schemes for the DG-FEM discretization of Eqs.~\eqref{eq:ch1nd}-\eqref{eq:ch2nd}.
Firstly, the \sipg formulation
\begin{align}
  \innerProd{\partial_t \pf_h}{v} - c(\mb{u}, \pf_h, \vp)  + Pe^{-1} b(M
(\pf_h), \chem_h, \vp) &= 0, \quad \mbox{ in }  \Omega_T \quad \forall \vp \in
V_{h,\pf}, \notag \\
  \innerProd{\chem_h}{\xi} - \innerProd{W'(\pf_h)}{\xi} - Cn^2 a (\pf_h, \xi)
&= 0, \quad \mbox{ in }  \Omega_T \quad \forall \xi \in V_{h,\chem},
  \label{eq:weakchSIPG}
  \end{align}
which uses standard weighing for the average of the mobility term, and the \swip
scheme:
\begin{align}
  \innerProd{\partial_t \pf_h}{v} - c(\mb{u}, \pf_h, \vp)  + Pe^{-1} b_M(M
(\pf_h), \chem_h, \vp) &= 0, \quad \mbox{ in }  \Omega_T \quad \forall \vp \in
V_{h,\pf}, \notag \\
  \innerProd{\chem_h}{\xi} - \innerProd{W'(\pf_h)}{\xi} - Cn^2 a (\pf_h, \xi)
&= 0, \quad \mbox{ in }  \Omega_T \quad \forall \xi \in V_{h,\chem},
  \label{eq:weakch}
  \end{align}
which concludes the pure DG discretization. For numerical simulations, we consider
the piece-wise linear approximation space $V_{h, \chem} = V_{h, \pf} = \{ \varphi \in L^2(\grid) : \varphi|_\elem
\in \Po^1(\elem), \forall \elem \in \grid \}$, and consequently, piece-wise
linear polynomials for both $\chem_h$ and $\pf_h$. \par

\begin{remark}[Weak mass conservation]\label{rem:weakm}
The weak phase-field $\pf_h$ is mass conservative, which follows from $1 \in
V_{h, \pf}$
  and similar arguments as in Remark \ref{pro:mass}.
\end{remark}

\begin{remark}[Discontinuous energy lifting]\label{rem:enegLift}
The phase-field energy from Eq.~\eqref{eq:etot} for a phase-field $\pf$ is given
by the inner product formulation:
\begin{equation}\label{eq:enegFEM}
  \mathcal{E}[\pf] = \frac{WeCn}{2} \innerProd{\nabla \pf}{\nabla \pf} + \frac{We}{Cn}
\innerProd{W(\pf)}{1}
\end{equation}
for $\pf \in H^1(\ome)$.
To also be applicable for a DG setting, we introduce the lifting of the energy
functional:
\begin{align}\label{eq:enegdg}
  \mathcal{E}_{\text{DG}}[\pf_h] &= \int_\grid \frac{WeCn}{2} |\nabla \pf_h|^2 + \frac{We}{Cn}
W(\pf_h) \, dx \notag \\
  &+ \frac{WeCn}{2} \sum_{e \in \Gamma_i}  \int_e \frac{\pena}{h_H} \vjump{\pf_h}^2
- (1 + \theta)\aver{\nabla \pf_h \cdot \mb{n}^+} \vjump{\pf} ds,
\end{align}
where $\pf_h \in V_{h, \pf}$. Furthermore, for $\tilde{\pf}_h \in \tilde{V}_{h,\pf}$,
we recover the standard free energy from Eq.~\eqref{eq:enegFEM}, i.e. $\mathcal{E}_{\text{DG}}[\tilde{\pf}_h] = \mathcal{E}[\tilde{\pf}_h]$,
since the jump
term $\vjump{\tilde{\pf}_h} = 0$ vanishes. Such a
formulation is consistent with formulations found
in the literature (see, for instance,~\cite{Acosta:2021,Khanwale:2022}), where
a \fem basis functions were employed to evaluate the energy functional $\mathcal{E}$.
\end{remark}
\begin{theorem}[Recovery of energy dissipation]\label{thm:dissip}
Let $\pf_h \in V_{h, \pf}$ and $\chem_h \in V_{h, \chem}$ be the discrete phase-field
and chemical potential satisfying the DG formulation from Eqs.~\eqref{eq:weakchSIPG}
or \eqref{eq:weakch}, then the following energy dissipation holds for Eq.~\eqref{eq:enegdg}
following the dissipation rate in Thm.~\ref{thm:cont_eneg}:
\begin{equation}\label{eq:diss}
\partial_t \mathcal{E}_{\text{DG}}[\pf_h] = - \frac{1}{Pe We Cn} b(M(\pf_h), \chem_h, \chem_h).
\end{equation}
\end{theorem}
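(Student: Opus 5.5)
The plan is the standard energy argument: differentiate $\mathcal{E}_{\text{DG}}[\pf_h]$ in time and test the two discrete equations with suitable functions. The key observation is that, for $\theta = 1$, the jump and consistency terms in Eq.~\eqref{eq:enegdg} are exactly the quadratic form associated with $a(\cdot,\cdot)$ from Eq.~\eqref{eq:weaklap}. (This uses the identification $\pena_1 = \pena$ up to the constant $\Lambda_\isec$, which we absorb.) Hence
\begin{equation*}
\mathcal{E}_{\text{DG}}[\pf_h] = \frac{WeCn}{2}\, a(\pf_h,\pf_h) + \frac{We}{Cn}\innerProd{W(\pf_h)}{1}.
\end{equation*}
Since $a$ is bilinear and symmetric for the SIPG choice $\theta=1$, the chain rule gives
\begin{equation*}
\partial_t \mathcal{E}_{\text{DG}}[\pf_h] = WeCn\, a(\pf_h,\partial_t\pf_h) + \frac{We}{Cn}\innerProd{W'(\pf_h)}{\partial_t \pf_h}.
\end{equation*}
I would state this symmetry requirement explicitly, because for $\theta\neq 1$ the derivative of $a(\pf_h,\pf_h)$ is $a(\pf_h,\partial_t\pf_h)+a(\partial_t\pf_h,\pf_h)$, and only the symmetric case collapses to a single term.

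Next, I would test the chemical potential equation in \eqref{eq:weakchSIPG} (respectively \eqref{eq:weakch}) with $\xi = \partial_t \pf_h$. This is admissible because $V_{h,\pf}=V_{h,\chem}$ and $\partial_t\pf_h \in V_{h,\pf}$. It yields
\begin{equation*}
\innerProd{\chem_h}{\partial_t\pf_h} = \innerProd{W'(\pf_h)}{\partial_t\pf_h} + Cn^2 a(\pf_h,\partial_t\pf_h).
\end{equation*}
Multiplying by $We/Cn$ shows that $\partial_t \mathcal{E}_{\text{DG}}[\pf_h] = \frac{We}{Cn}\innerProd{\chem_h}{\partial_t\pf_h}$. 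Then I would test the phase-field equation with $\vp = \chem_h$, which is again admissible by the equal-order choice. This replaces $\innerProd{\partial_t\pf_h}{\chem_h}$ by $c(\mb{u},\pf_h,\chem_h) - Pe^{-1} b(M(\pf_h),\chem_h,\chem_h)$, or by the corresponding $b_M$ term for SWIP. For SWIP, $b_M(M(\pf_h),\chem_h,\chem_h)$ has the same quadratic structure, with $\aver{M}_H$ replacing the arithmetic average. I would note that \eqref{eq:diss} is then to be read with $b_M$ in place of $b$.

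The main obstacle is the advection contribution $c(\mb{u},\pf_h,\chem_h)$, which does not vanish on its own. In the continuous Theorem~\ref{thm:cont_eneg}, this term cancels against the surface tension work in the Navier--Stokes energy, and it is absent for the pure CH part. Since Eq.~\eqref{eq:diss} concerns only the phase-field energy, I would argue in one of two ways. The first is the case $\mb{u}=0$, or a divergence-free $\mb{u}$ whose coupling term is accounted for in the kinetic energy balance, in which case the term is transferred to the NS energy. The second is to state that the identity holds modulo this transport term, exactly as in the continuous setting, where $\innerProd{\mb{u}\cdot\nabla\pf}{\chem}$ is the exchange with the kinetic energy. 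Collecting the constants gives the prefactor $\frac{We}{Cn}\cdot\frac{1}{Pe}$ in front of $b(M(\pf_h),\chem_h,\chem_h)$. The stated prefactor $\frac{1}{PeWeCn}$ is recovered after rescaling the energy by $We^{-2}$, consistent with the $\frac{1}{CnWe}$ normalization in Eq.~\eqref{eq:etot}, so I would fix that scaling convention before writing the final identity. Finally, Theorem~\ref{thm:coer} gives $b\ge0$, and therefore genuine dissipation.
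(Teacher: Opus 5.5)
Your argument is correct, and it takes a genuinely different and more complete route than the paper. The paper's proof never derives \eqref{eq:diss}. It takes the identity as given by analogy with Thm.~\ref{thm:cont_eneg}, and then only invokes the coercivity bound $b(M(\pf_h),\chem_h,\chem_h)\geq\tilde C\|\chem_h\|_{DG}^2\geq 0$ from Thm.~\ref{thm:coer} with the parameters of Rem.~\ref{rem:dgparams}. So what it actually establishes is the sign, not the equality.

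You supply the missing step: the standard energy argument with $\xi=\partial_t\pf_h$ and $\vp=\chem_h$, admissible because $V_{h,\pf}=V_{h,\chem}$ on a fixed mesh. Doing this gives a precise list of the hypotheses under which the stated equality actually holds:
\begin{itemize}
\item $\theta=1$, so that $\frac{d}{dt}\frac12 a(\pf_h,\pf_h)=a(\pf_h,\partial_t\pf_h)$.
\item $b_M$ in place of $b$ for the SWIP scheme.
\item $\mb{u}=\mb{0}$, since otherwise the right-hand side carries an extra sign-indefinite multiple of $c(\mb{u},\pf_h,\chem_h)$. Your first option of moving that term into the kinetic energy is not available in the paper's decoupled setting, as Sec.~\ref{sec:discreteLevel} itself concedes. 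Your second formulation, the identity modulo the transport term, is the accurate one.
\item The $1/We$ normalization of \eqref{eq:etot} rather than the factor $We$ in \eqref{eq:enegdg}; this is what yields the prefactor $\frac{1}{PeWeCn}$.
\end{itemize}
The final coercivity step is shared by both treatments and is equally conditional in both. It needs the penalty to dominate the contrast $\lambda^\star$ and a regularized mobility, and for $b_M$ there is only the remark following \eqref{eq:diffmob2}. This is why the paper lists energy dissipation of SIPG/SWIP as unproven in Tab.~\ref{tab:scheme_expect}.

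One step should be stated more carefully: the mismatch between $\eta$ in \eqref{eq:enegdg} and $\eta_1=\Lambda_\isec\eta$ in $a$ cannot be ``absorbed''. If the lifted energy keeps $\eta$, its time derivative differs from $\frac{We}{Cn}\innerProd{\chem_h}{\partial_t\pf_h}$ by $WeCn\sum_{\isec\in\Gamma_i}\int_\isec\frac{\eta-\eta_1}{h_H}\vjump{\pf_h}\vjump{\partial_t\pf_h}\,ds$, which has no sign. So the penalty in $\mathcal{E}_{\text{DG}}$ must be redefined to be $\eta_1$. This is a correction to the statement of the same kind as your $We^{-2}$ rescaling, and it should be stated as such.
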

\begin{proof}
Following the derivation in Thm.~\ref{thm:coer} and under the assumptions in Rem.~\ref{rem:dgparams} regarding $\eta, \Lambda_e$ we obtain that:
\begin{equation}
  b(M(\pf_h), \chem_h, \chem_h) \geq \tilde{C} ||\chem_h||^2_{DG} \geq 0,
\end{equation}
thus
\begin{equation}
  \partial_t \mathcal{E}_{\text{DG}}[\pf_h] = - \frac{1}{Pe We Cn} b(M(\pf_h), \chem_h, \chem_h)
\leq 0,
\end{equation}
which concludes the proof.
\end{proof}
For the remainder of the paper we drop the DG subscript
for the energy functional $\mathcal{E}_{\text{DG}}$ and simply write $\mathcal{E}$ as when $\pf_h \in H^1(\Omega)$ then the extra terms vanish as noted in Rem.~\ref{rem:enegLift}.

\subsection{The Acosta-Soba upwinding scheme}
The Acosta-Soba upwinding (ASU) scheme is a structure-preserving scheme which
was first studied for the convection CH equations~\cite{Acosta:2021}, and has
then been extended to the coupled CHNS equations~\cite{Acosta:2025}. The scheme
is constructed by introducing an auxiliary discontinuous piece-wise constant
variable
\begin{equation}
 w_h \in \underbrace{\left\{ v \in L^2(\grid) : v|_\elem \in \mathbb{P}^0(\elem),
\forall \elem \in \grid \right\}}_{:=V_{h, w}},
\end{equation}
which we use for an upwinding formulation, and it is used as the variable in
the dynamical part of Eq.~\eqref{eq:ch1nd}. The variable $w_h$ serves as a low-order
approximation of the physical phase-field $\Tilde{\pf}_h
\in \Tilde{V}_{h, \pf} \subset H_1(\Omega)$, with $\Pi_0 \pf_h = w_h$ provided
by mass lumping.
The mobility trilinear form is given by the upwind mobility trilinear form:
\begin{align}
b_{AS}\left(M(w_h), \chem_h, \vp\right) :=
  &\sum_{e \in \Gamma_i} \int_e
       (\aver{ -\nabla \chem_h \cdot \nbp }_{\oplus}
      ( M^{\uparrow}(w_h^+) + M^{\downarrow}(w_h^-) ) \notag \\
  &\qquad\qquad
      + \aver{ -\nabla \chem_h \cdot \nbp }_{\ominus}
     ( M^{\uparrow}(w_h^-) + M^{\downarrow}(w_h^+))
    ) \vjump{\vp} ds,
\end{align}
where
\begin{equation}
M^{\uparrow}(w) = M(w_{\ominus}), \quad \text{and} \quad M^{\downarrow}(w) =
M(w_{\oplus}) - M(0),
\end{equation}
and in particular
\begin{equation}
M^{\uparrow}(w) + M^{\downarrow}(w) = M(w).
\end{equation}
Then the \asu scheme is given by
\begin{align}
  \innerProd{\partial_t w_h}{\bar{w}} - c(\mb{u}, w_h, \bar{w})  + Pe^{-1}
b_{AS}(M (w_h), \chem_h, \bar{w}) &= 0, \quad \forall \bar{w} \in V_{h,w},\notag
\\
  \innerProd{\chem_h}{\xi} - \innerProd{W(\Tilde{\pf}_h)}{\xi} - Cn^2 a
(\Tilde{\pf}_h, \xi) &= 0, \quad \forall \xi \in \Tilde{V}_{h,\chem}, \label{eq:acosta3}
\\
  \innerProd{\Tilde{\pf}_h}{\vp}_{ML} - \innerProd{w_h}{\vp}_{ML} &= 0, \quad
\forall
\vp \in \Tilde{V}_{h,\pf}.
  \notag
  \end{align}
\begin{theorem}[Weak mass conservation for the \asu scheme \cite{Acosta:2021}]\label{eq:acostaMass}
The approximative physical phase-field $\Tilde{\pf_h}$ obeying Eq.~\eqref{eq:acosta3}
is weakly mass conservative. In particular the mass-lumping guarantees that
\begin{equation}
\innerProd{\Tilde{\pf_h}}{1}_{ML} = \innerProd{w_h}{1}_{ML},
\end{equation}
ensures mass conservation for $\Tilde{\pf_h}$ when $w_h$ is mass conservative.
\end{theorem}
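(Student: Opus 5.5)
The plan has two parts. First, show that the piecewise constant variable $w_h$ from Eq.~\eqref{eq:acosta3} is mass conservative. Then transfer this to $\Tilde{\pf}_h$ through the mass-lumped projection identity.

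\textbf{Step 1: conservation of $w_h$.} Since $1 \in V_{h,w}$, we may take $\bar{w} = 1$ in the first equation of Eq.~\eqref{eq:acosta3}. Every jump term then vanishes, because $\vjump{1} = 0$ on each $e \in \Gamma_i$.
\begin{itemize}
\item In the upwind mobility form $b_{AS}$, each term carries the factor $\vjump{\bar{w}}$, so $b_{AS}(M(w_h),\chem_h,1) = 0$.
\item The advection form of Eq.~\eqref{eq:adv2} becomes $c(\mb{u}, w_h, 1) = \int_\grid \mb{u}\cdot\nabla 1\, w_h\,dx = 0$, since $\nabla 1 = 0$.
\item Boundary faces cause no trouble. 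The face sums in $c$ and $b_{AS}$ run over $\Gamma_i$ only, consistent with the no-flux conditions Eq.~\eqref{eq:chboundary} and $\mb{u}\cdot\mb{n}|_{\partial\Omega}=0$.
\end{itemize}
What remains is $\innerProd{\partial_t w_h}{1} = 0$. Hence $\innerProd{w_h}{1}$ is constant in time, exactly as in Remark~\ref{rem:weakm}. For piecewise constants the lumped and exact inner products agree, so $\innerProd{w_h}{1}_{ML} = \innerProd{w_h}{1}$ is conserved too.

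\textbf{Step 2: transfer to $\Tilde{\pf}_h$.} In the third equation of Eq.~\eqref{eq:acosta3}, choose $\vp = 1$. This is admissible because constants lie in the continuous space $\Tilde{V}_{h,\pf}$. We obtain $\innerProd{\Tilde{\pf}_h}{1}_{ML} = \innerProd{w_h}{1}_{ML}$, which is the displayed identity of the statement.

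\textbf{Step 3: lumped versus exact integrals.} It remains to note that mass lumping does not change integrals against the constant function. Writing $\Tilde{\pf}_h = \sum_j \Tilde{\pf}_j \phi_j$ in the nodal basis, the row-sum lumping gives
\[
\innerProd{\Tilde{\pf}_h}{1}_{ML} = \sum_j \Tilde{\pf}_j \innerProd{\phi_j}{1} = \innerProd{\Tilde{\pf}_h}{1}.
\]
The same holds if lumping is realised by a vertex quadrature that is exact for linears. Combining Steps 1–3 gives $\innerProd{\Tilde{\pf}_h(t)}{1} = \innerProd{w_h(t)}{1} = \innerProd{w_h(0)}{1}$, and hence $\masspf$ is constant.

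\textbf{Main obstacle.} The subtle point is Step 3, i.e.\ making precise which lumped inner product is meant for the mixed pairing of $w_h \in V_{h,w}$ with a continuous $\vp$. I would fix the definition as $\innerProd{w_h}{\vp}_{ML} = \sum_j \vp(x_j)\int_\Omega w_h \phi_j\,dx$, or equivalently as vertex quadrature. I would then check that taking $\vp = 1$ and using the partition of unity $\sum_j \phi_j = 1$ reproduces the exact integral $\int_\Omega w_h\,dx$.
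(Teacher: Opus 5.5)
Your proof is correct. Steps 1--2 are the standard argument behind \cite[Proposition 3.10]{Acosta:2021}, which is all the paper's proof cites for a fixed mesh, and you also prove the hypothesis that $w_h$ is conserved (the paper leaves this to the subsequent remark).

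Where you differ from the paper is the transfer from $w_h$ to $\Tilde{\pf}_h$. The paper obtains it from Theorem~\ref{thm:boundedness}(ii), which uses:
\begin{enumerate}
\item the explicit nodal formula $\Tilde{\pf}_i = \sum_{\elem\in I_i}|\elem|\,w_h|_\elem/|\supp(\varphi_i)|$;
\item the geometric identity $\int\varphi_i\,dx = |\supp(\varphi_i)|/C_\elem$;
\item a rearrangement of the node/element double sum;
\item the fact that the vertex average of an affine function equals its cell average.
\end{enumerate}
You instead test the lumped relation with $\vp = 1$. You then use only that nodal quadrature, or row-sum lumping, integrates the relevant functions exactly against constants. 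Your route is shorter and needs neither the coefficient formula nor the identity for $\int\varphi_i$. The paper's computation has the advantage of also giving the $L^\infty$ bound of part (i), which it needs for the boundedness results. Your worry about defining the mixed pairing $\innerProd{w_h}{\vp}_{ML}$ is harmless in the paper's setting. Your two candidate definitions coincide on simplices and parallelograms, since there $\int_\elem\phi_j\,dx = |\elem|/C_\elem$ equals the vertex weight.

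The one case the paper's proof explicitly addresses and you do not is the adaptive grid. To cover it, add that the $L^2$ transfer of the piecewise constant $w_h$ preserves $\innerProd{w_h}{1}$ under refinement (values are copied) and coarsening (volume-weighted averages). Your Steps 2--3 then apply verbatim on each new mesh.
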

\begin{proof}
A proof for uniform triangular grid is found in~\cite[Proposition 3.10]{Acosta:2021}.
The proof for the adaptive conforming triangular grid follows similarly in
combination of Thm.~\ref{thm:boundedness}.
\end{proof}
\begin{remark}
Weak mass conservation for $w_h$ follows similarly to Rem.~\ref{rem:weakm}.
Overall, the authors in~\cite{Acosta:2021} provide excellent arguments for
the mass conservation of each field for a non-adaptive grid.
\end{remark}
\begin{theorem}[Bound-preservation for the \asu scheme \cite{Acosta:2021}]\label{thm:asubd}
The approximative phase-field $w_h \in V_{h,w}$ in Eq.~\eqref{eq:acosta3} has
a maximum principle if $\mbu \in H_0^1(\text{div}, \Omega)$, i.e. $w_h$ is bound preserving.
\end{theorem}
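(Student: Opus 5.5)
The plan is a discrete maximum-principle argument at the level of the cell averages. Because $V_{h,w}$ is piecewise constant, testing the first equation of Eq.~\eqref{eq:acosta3} with the indicator $\bar w = \chi_K$ of a single cell turns the scheme into a finite-volume update for $w_K := w_h|_K$. In this update the volume part of $c(\mbu,w_h,\chi_K)$ vanishes, since $\nabla \chi_K = 0$ on $K$. We work with a time discretization (implicit Euler for $w_h$, as in the decoupled setting of the paper), and write the cell equation as
\begin{equation*}
|K|\frac{w_K^{n+1}-w_K^n}{\Delta t} + \sum_{e\subset\partial K}\int_e F_e^{adv}\,ds + Pe^{-1}\sum_{e\subset\partial K}\int_e F_e^{mob}\,ds = 0 .
\end{equation*}
Here $F_e^{adv}$ is the upwind flux $\aver{\mbu\cdot\nbp}_\oplus w^+ + \aver{\mbu\cdot\nbp}_\ominus w^-$, and $F_e^{mob}$ is the upwind mobility flux of $b_{AS}$. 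The goal is to show that if $w^n\in[-1,1]$, then $w^{n+1}\in[-1,1]$; induction in $n$ then gives the statement.

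\textbf{Step 1 (upper bound).} Let $K$ be a cell where $w_K^{n+1}$ attains its maximum, and suppose for contradiction that $w_K^{n+1}>1$.

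For the advection part, use $\nabla\cdot\mbu=0$ together with $\mbu\cdot\n=0$ on $\partial\Omega$, which is where the assumption $\mbu\in H_0^1(\mathrm{div},\Omega)$ enters. These give $\sum_{e\subset\partial K}\int_e \mbu\cdot\n_K\,ds=0$, so $w_K$ times this zero sum can be subtracted from the advective contribution. The advective sum then becomes $\sum_e \int_e (\mbu\cdot\n_K)_\ominus (w_{K'}-w_K)\,ds$, where $K'$ denotes the neighbour across $e$. Each term is $\geq 0$ at a maximum.

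For the mobility part, the key point is the splitting $M^\uparrow(w)=M(w_\ominus)$, $M^\downarrow(w)=M(w_\oplus)-M(0)$. On each edge the flux has the form
\begin{equation*}
F_e^{mob} = g_\oplus\bigl(M^\uparrow(w_K)+M^\downarrow(w_{K'})\bigr) + g_\ominus\bigl(M^\uparrow(w_{K'})+M^\downarrow(w_K)\bigr),
\end{equation*}
with $g = -\nabla\chem_h\cdot\n_K$, where the orientation convention is fixed so that a positive flux leaves $K$. For $w_K>1$ we use the truncated degenerate mobility of Thm.~\ref{pro:max}, $M(w)=\max\{1-w^2,0\}$, together with its regularisation $\delta$ handled as a limit. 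This gives $M^\uparrow(w_K)=M(0)=1$ and $M^\downarrow(w_K)=-1$. Moreover $M^\downarrow(w_{K'})\le 0$ and $M^\uparrow(w_{K'})\ge 0$, the latter because $w_{K'}\le w_K$. I would then check the sign of each term so that the total mobility contribution leaving $K$ is nonnegative, possibly after symmetrising over the edge. This is exactly the monotonicity built into the Acosta-Soba construction: outflow from a cell above $1$ is carried by the increasing branch, which is degenerate there, and inflow by the decreasing branch.

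Combining the two parts, the flux sums are nonnegative, so $w_K^{n+1}\le w_K^n\le 1$, a contradiction. \textbf{Step 2 (lower bound).} The lower bound $w\ge-1$ follows symmetrically, with the roles of $\oplus/\ominus$ and $\uparrow/\downarrow$ swapped.

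\textbf{Main obstacle.} The hard step is the sign analysis of the mobility flux. The sign of $g$ is independent of $w$, since $\chem_h$ comes from the nonlinear second equation of Eq.~\eqref{eq:acosta3}, so monotonicity must hold for either sign of $g$. It must also be robust to the truncation of $M$ outside $[-1,1]$ and to the $\delta$-regularisation. I would first try the clean route of \cite[Theorem 3.11]{Acosta:2021}: show that the flux function $(w_K,w_{K'})\mapsto F_e^{mob}$ is nondecreasing in $w_K$ and nonincreasing in $w_{K'}$, i.e.\ monotone in the finite-volume sense, and vanishes when both arguments lie outside the bounds. The maximum principle then follows by the standard monotone-scheme argument.

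A secondary subtlety is the adaptive conforming mesh. Since the argument is purely cellwise and uses only the face-flux balance of the divergence-free $\mbu$, I expect it to carry over unchanged to adaptive grids, provided $\mbu$ is continuous across faces.
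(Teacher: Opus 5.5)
Your fixed-mesh argument is correct in substance and is essentially the proof of \cite[Theorem 3.11]{Acosta:2021}, which the paper simply cites. You test with $\chi_K$ and use $\int_{\partial K}\boldsymbol{u}\cdot\boldsymbol{n}_K\,ds=0$ together with the single-valued normal flux to give the upwind advective sum a sign at an extremal cell. For the diffusive flux you use the monotone splitting $M=M^\uparrow+M^\downarrow$ of the truncated mobility.

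The sign check in Step~1, however, quotes the wrong inequalities. With $M^\uparrow(w_K)=1$ and $M^\downarrow(w_K)=-1$, the edge flux is $g_\oplus\bigl(1+M^\downarrow(w_{K'})\bigr)+g_\ominus\bigl(M^\uparrow(w_{K'})-1\bigr)$. You therefore need $M^\downarrow(w_{K'})\ge -1$ and $M^\uparrow(w_{K'})\le 1$, i.e.\ the ranges $[-1,0]$ and $[0,1]$ of the truncated branches. The facts $M^\downarrow\le 0$ and $M^\uparrow\ge 0$ that you list are the ones needed for the lower bound. With the correct bounds, both terms are nonnegative pointwise on $e$ for either sign of $g$. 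No symmetrisation is needed, and you do not even use $w_{K'}\le w_K$, which is required only for the advective part. So your ``main obstacle'' is a one-line check.

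Also, rather than handling the $\delta$-regularisation as a limit, drop it in $b_{AS}$; it is only needed for coercivity of $b$ and $b_M$. For fixed $\delta>0$ one has $M^\downarrow(w_K)=\delta-1$, so the $g_\ominus$-term can be negative of size $|g|\delta$, and the bound would then hold only up to $O(\delta)$.

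The one genuine omission is the adaptive part, which is exactly what the paper's proof adds beyond the citation. Your remark that the cellwise argument works on any conforming mesh covers only the time step. The induction $w_h^n\in[-1,1]\Rightarrow w_h^{n+1}\in[-1,1]$ also needs the transfer of $w_h$ between meshes, done every few steps before the next solve, to map bounded data to bounded data. For piecewise constants this is immediate:
\begin{itemize}
\item under refinement, the children inherit the parent value;
\item under coarsening, the $L^2$ projection onto $\mathbb{P}^0$ assigns the volume-weighted average of the children's values, which is a convex combination.
\end{itemize}
Hence no new extrema are created, and with this step added the proof is complete.
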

\begin{proof}
We refer to~\cite[Theorem 3.11]{Acosta:2021} which contains a detailed proof and derivation.
In a grid-adaptive setting, since $w_h$ is piece-wise
constant, the maximum principle is preserved under grid
refinement and coarsening since the procedure can not create
new extrema.
\end{proof}

\begin{remark}[Energy dissipation for the \asu scheme]\label{rem:asu_eneg}
An energy dissipation similar to Thm.~\ref{thm:dissip} is provable for the \asu scheme without coupling is provided in~\cite{Acosta:2021}. However, this has not yet been proven for the decoupled CHNS equations at the time of writing, as is highlighted in the pre-print~\cite{Acosta:2024} of the paper~\cite{Acosta:2025} (which leaves out the section about the decoupled scheme). A provisional solution to this problem could be to, for instance, re-iterate at a fixed time $t$ over the decoupled scheme with careful re-assignment of updated variables until convergence of both schemes total residual.
Nevertheless, we remark that we observe energy dissipation in all numerical experiments performed with the \asu scheme for the decoupled CHNS equations.
\end{remark}

\subsection{Time discretization}
Consider a discretization of $N$ equidistant time increments $\dt := \frac{T}{N}$
of the time interval $(0,T]$. We introduce a discrete time sequence $(t^n)_{i=0}^N$
where $t^n := n \dt$  to perform discrete time evolution of $\varphi_h(t^n)$
at specific time steps. From now onwards we denote
$\varphi_h^n :=\varphi_h(t^n)$ for brevity. \par
For the time discretization of the CHNS equations, we consider the approximation:
\begin{equation}
\partial_t \varphi^{n} \approx \frac{1}{\dt} \sum_{j=0}^{q} \alpha_j \varphi^{(n-j)}
+ \mathcal{O}(\dt^{q+1}), \quad \quad \sum_{j = 0}^q \alpha_j = 0,
\end{equation}
for some appropriate choice of $\alpha_j \neq 0$ for $j = 0, \ldots, q$ which
approximates the time derivative to order $\dt^{q}$. In this paper we only consider
the implicit Euler formulation, i.e. $q = 1$ and $\alpha_0 = -\alpha_1 = 1$.
Moreover, an implicit-explicit (IMEX) discretization is utilized. IMEX is used
for the treatment of the non-linear energy potential $W'(\pf)$ using the Eyre
approach~\cite{Tierra:2014, Eyre:1998}:
\begin{equation}
\Phi (\pf_h^{(1)},\pf_h^{(2)}) = \Phi^+(\pf_h^{(1)}) - \Phi^-(\pf_h^{(2)}) ,
\end{equation}
where $\Phi^+$ and $\Phi^-$ correspond to a convex-concave decomposition for
the energy potential $W'(\pf)$ in terms of the phase-field variable. In particular
for \sipg/\swip we use the non-linear Eyre decomposition~\cite{Tierra:2014, Eyre:1998}:
\begin{equation}
    \Phi^+(\pf_h) = \pf_h^3, \quad \Phi^-(\pf_h) = \pf_h,
\end{equation}
and the linear Eyre decomposition for ASU:
\begin{equation}
    \Phi^+(\pf_h) = \pf_h, \quad \Phi^-(\pf_h) = 2\pf_h - \pf_h^3.
\end{equation}
Moreover, the evaluation $\Phi(\pf_h, \pf_h) =  W'(\pf_h)$ recovers the formulation
previously presented in Eq.~\eqref{eq:ch1}. Finally, an implicit treatment is
used for the mobility $M(\cdot)$ and advection term, while the weights in Eq.~\eqref{eq:diffmob2}
are treated explicitly. Thus, the general \sipg/\swip schemes are given by:
\begin{align}
  \frac{1}{\dt} \sum_{j=0}^{q} \alpha_j \innerProd{\pf_h^{(n+1-j)}}{v} -
c(\mb{u}_h^{\tilde{n}}, \pf_h^{n+1}, \vp) + Pe^{-1} b_{M^{n}}(M(\pf_h^{n+1}), \chem_h^{n+1},
\vp) &= 0,  \notag \\
  \innerProd{\chem_h^{n+1}}{\xi} - \innerProd{\Phi^+(\pf_h^{n+1}) - \Phi^-(\pf_h^{n})}{\xi}
- Cn^2 a (\pf_h^{n+1}, \xi) - \innerProd{S(\pf_h)}{\xi} &= 0,
  \label{eq:ch2ndt}
\end{align}
for $\vp, \xi \in V_{h,\pf} \times V_{h,\chem}$, $\mb{u}_h^{\tilde{n}}$ is a discrete velocity field evaluated at $t^{\tilde{n}} \in
[t^n, t^{n+1}]$, in particular due to the usage of a splitting
scheme the best pick is at $t^{\tilde{n}} = t^{n+\frac{1}{2}}$
provided the Strang splitting outlined in Algs.\ref{alg:AS}-\ref{alg:FEM}
, and finally $S(\pf_h)$ is some stabilization
function due to IMEX couplings. However, as is noted in~\cite{Yue:2018},
a proper treatment for the unconditional energy stability with
BDF2 requires
the addition of a Douglas-Dupont-type regularization term $S(\pf_h) =A\tau \Delta(\pf^n_h
- \pf_h^{n-1})$ for $A \geq 0$~\cite{Yue:2018}. However, this is not necessary
for our IMEX scheme by using a first-order in time formulation.


\noindent Finally, the discretization of the \asu scheme~\cite{Acosta:2021,
Acosta:2025} is given by
\begin{align}
  \dt^{-1}\innerProd{w_h^{n+1} - w_h^{n}}{\bar{w}} - c(\mb{u}_h^{\tilde{n}}, w_h^{n+1},
\bar{w})  + Pe^{-1} b_{AS}(M (w_h^{n+1}), \chem_h^{n+1}, \bar{w}) &= 0,  \notag
\\
  \innerProd{\chem_h^{n+1}}{\xi} - \innerProd{\Phi^+(\Tilde{\pf}_h^{n+1}) - \Phi^-(\Tilde{\pf}_h^{n})}{\xi}
- Cn^2 a (\Tilde{\pf}_h^{n+1}, \xi) &= 0,
\label{eq:acosta1} \\
  \innerProd{\Tilde{\pf}_h^{n+1}}{\vp}_{ML} - \innerProd{w_h^{n+1}}{\vp}_{ML} &=
0,
    \notag
\end{align}
for $(\bar{w}, \xi, \vp) \in V_{h,w} \times \Tilde{V}_{h,\chem} \times
\Tilde{V}_{h,\pf}$.
\begin{remark}
The authors in~\cite{Acosta:2025} utilized some modification of the CHNS to
properly prove bound-preservation for a coupled system. It is outside the scope
of this paper to derive a proof which necessarily has energy stability in the
decoupled setting for the \asu scheme. Consequently, only tests which utilize
a similar form to
the one in the pre-print~\cite{Acosta:2024} will be used as numerical evidence.
\end{remark}

\subsection{Scaling limiter}

\begin{figure}[H]
    \centering
    \includegraphics[width=0.5\linewidth]{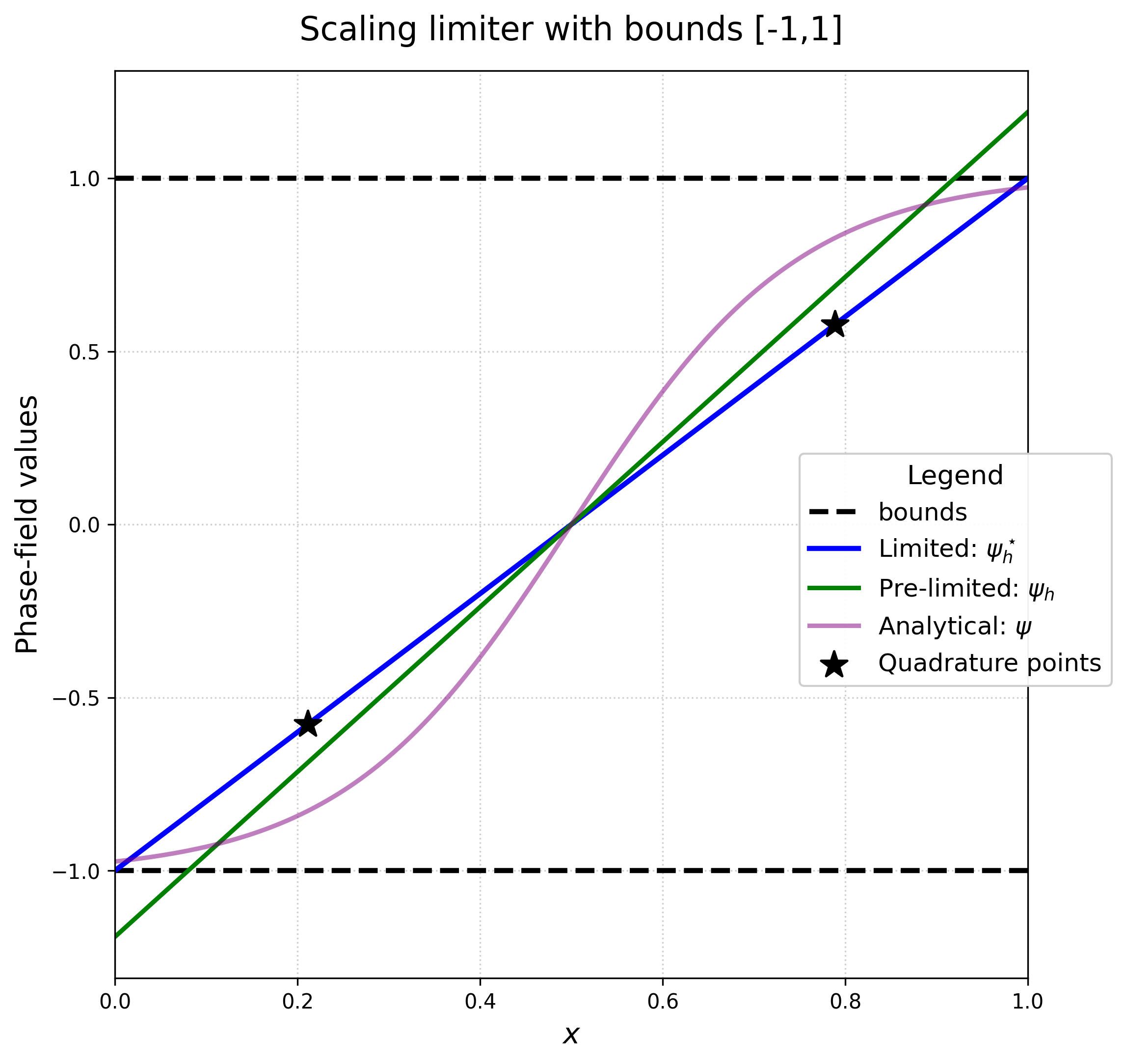}
    \caption{An example of a scaling limiter for $\psi = 1.1\tanh(10(x-0.5))$.
}
    \label{fig:scaling}
\end{figure}

For the \sipg and \swip scheme we enforce boundedness of the phase-field $\pf_h$
by applying an element-by-element scaling limiter.
This limiter was first suggested in~\cite{Zhang:2010} and successfully applied
in~\cite{Cheng:2013, twophase:18, Liu:2024}.
The general idea is to scale the phase-field on each element such that the constraints
on minimum and maximum values of the phase-field are respected.
For $K \in \grid$ we denote $\pf_K = {\pf_h}_{|K}$ the restriction of $\pf_h$
to element $K$ and define the following projection operator
$\Pi_{s}: V_h \longrightarrow V_h$ with

\newcommand{\stabfactor}{\alpha}
\newcommand{\limpff}{\widehat{\pf_K}}
\newcommand{\pfmean}{\overline{\pf_K}}

\begin{equation}
  \label{eq:scalinglimiter}
  \sum_{K \in \grid} \int_{K} \Pi_s[\pf_K] \cdot \varphi  =  \sum_{K \in \grid}
\int_{K} \limpff \cdot \varphi
  \quad \forall \varphi \in V_h
\end{equation}
with the scaled phase-field
\begin{equation}
  \label{eq:stabsol}
  \limpff(\ics) := \stabfactor_K\big( \pf_K(\ics) - \pfmean\big) + \pfmean
\end{equation}
  and
$\pfmean$ being the mean value of $\pf_K$. Note that by construction this leads
to a mass conservative scheme since the scaled part has mean
value zero and therefore $\limpff$ has the same mean value as $\pf_K$.
The scaling factor is
\begin{equation}\label{eq:limiter}
  \stabfactor_K := \min_{\ics \in \Lambda_K}\big \{ 1, \left |\frac{\pfmean
- \pf_{min}}{\pfmean - \pf_K(\ics)} \right |, \left |\frac{\pfmean - \pf_{max}}{\pfmean
- \pf_K(\ics)}\right | \big \}
\end{equation}
for the combined set of all quadrature points $\Lambda_K$ used for evaluation
of the bilinear
forms of the given scheme. $\pf_{min} = -1$ and $\pf_{max} = 1$ are the bounds
that $\pf_h$ should obey.
Note, for linear polynomials it suffices to evaluate $\pf_K$ at the vertices
of element $K$ to find $\alpha_K$. For higher polynomial degrees, however, more
quadrature points have to be considered which is slightly more expensive.

This scaling procedure is simple to implement, only needs the solution of element-local
systems in Eq.~\eqref{eq:scalinglimiter}
and renders our proposed \sipg and \swip schemes to be bound preserving if $\pfmean \in [-1,1]$ for every $\elem \in \grid$.

\begin{remark}
Alternative limiting methods have been carefully studied previously for the
CH equations in  \cite{Frank:2020, Huang:2022, Liu:2024} and references therein.
\end{remark}

\begin{remark}[Cell Averages]
As pointed out in \cite{Liu:2024}, depending on the choice of the mobility formulation
the DG scheme might not
obey the bounds even for the cell averages in which case the scaling limiter
cannot work correctly.
In \cite{Liu:2024} constant and degenerated mobility formulations are studied.
The modification of cell averages may lead to violation of mass conservation
,if it is not properly constrained,
which in our setting is not a desirable outcome to consider and is therefore not studied.
For the degenerate mobility formulation \eqref{eq:mobility} a provable result
is available (see Theorem \ref{pro:max})
and one can therefore expect the DG scheme to obey these bounds at least for
the cell averages. For this reason only
  the degenerate mobility formulation is considered in this work.
Furthermore, in numerical experiments considered in this paper no violation
of the bounds for the cell averages has been encountered.
\end{remark}


\subsection{Standard Finite-Element Schemes}

The standard continuous Galerkin Finite Element Method (\fem) scheme can be
recovered by choosing
the continuous space $\Tilde{V}_{h}$ instead of the DG space $V_h$ in Eq.
\eqref{eq:weakchSIPG}. It's easy to see that the skeleton terms, for example,
in Eq.~\eqref{eq:diffmob2} or \eqref{eq:adv2} vanish since the jump terms
of the involved basis functions are zero. Several modifications are necessary
to render the \fem scheme mass conservative and bound preserving.
For this purpose we formulate the following theorem.

\begin{theorem}[Bounded and mass conservative lumped $L^2$-projection]\label{thm:boundedness}
Consider the mass-lumped
$L^2$-projection $\PiML: V_{h,\pf} \to \Tilde{V}_{h,\pf}$
given by
\begin{equation}\label{eq::MLProject}
  \innerProd{\pf_h}{w}_{ML} = \innerProd{\Tilde{\pf}_{h}}{w}_{ML} \quad \forall
w \in \Tilde{V}_{h,\pf},
\end{equation}
where $\Tilde{V}_{h,\pf}$ is spanned by a piece-wise linear Lagrange basis over
$\grid$, and the mass-lumped inner product is defined by nodal quadrature so
that
the Lagrange basis is orthogonal with respect to $\langle\cdot,\cdot\rangle_{ML}$.
If $K \in \grid$ is a convex polygon then the following holds if $\pf_h$ is
piece-wise linear over each
element $\elem \in \grid$:

\renewcommand{\labelenumi}{\theenumi}
\renewcommand{\theenumi}{\roman{enumi})}%
\begin{enumerate}
  \item\label{enum:bounded} $||\Tilde{\pf}_h||_{L^\infty(\grid)} \leq ||\pf_h||_{L^\infty(\grid)}$, and
  \item\label{enum:mass} $m_{\pf_h} = m_{\Tilde{\pf}_h}$.
\end{enumerate}
\end{theorem}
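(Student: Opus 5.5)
Since the Lagrange basis $\{\phi_i\}$ of $\Tilde{V}_{h,\pf}$ is orthogonal with respect to $\langle\cdot,\cdot\rangle_{ML}$, the projection \eqref{eq::MLProject} decouples node by node. We write $\Tilde{\pf}_h = \sum_i \Tilde{\pf}_i \phi_i$ and test with $w = \phi_i$. The left-hand side then reduces to $\Tilde{\pf}_i \langle \phi_i, \phi_i\rangle_{ML} = \Tilde{\pf}_i m_i$, where $m_i := \sum_{K \ni x_i} \omega_{K,i} > 0$ collects the positive nodal quadrature weights $\omega_{K,i}$ of node $x_i$ over the elements sharing it. The right-hand side is $\sum_{K\ni x_i} \omega_{K,i}\, \pf_h|_K(x_i)$, because nodal quadrature on $K$ only samples the vertices of $K$ and $\phi_i$ vanishes at every vertex other than $x_i$. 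Here $\pf_h|_K(x_i)$ denotes the trace of the discontinuous function from element $K$. This gives the explicit formula
\begin{equation*}
\Tilde{\pf}_i = \frac{\sum_{K \ni x_i} \omega_{K,i}\, \pf_h|_K(x_i)}{\sum_{K\ni x_i}\omega_{K,i}},
\end{equation*}
so each nodal value of $\Tilde{\pf}_h$ is a convex combination of the element-wise vertex values of $\pf_h$ at that node.

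\textbf{Proof of i).} Each $\pf_h|_K(x_i)$ is a point value of the element-wise linear function on the closed convex element $K$. Hence $|\pf_h|_K(x_i)| \le \|\pf_h\|_{L^\infty(K)}$ by continuity of the polynomial up to the boundary. The convex combination gives $|\Tilde{\pf}_i| \le \|\pf_h\|_{L^\infty(\grid)}$. Finally, $\Tilde{\pf}_h$ is piecewise linear on each convex polygon, so it attains its extrema at vertices, which yields $\|\Tilde{\pf}_h\|_{L^\infty(\grid)} \le \max_i |\Tilde{\pf}_i|$. Strictly this vertex-extremum argument needs $\Tilde{\pf}_h|_K$ to be a convex combination of nodal values. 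That holds for $\Po^1$ on simplices, and for bilinear ($Q^1$) functions on quadrilaterals the Lagrange basis is also nonnegative and sums to one. I would state this partition-of-unity and nonnegativity property explicitly as the place where convexity of $K$ and the linear Lagrange basis are used.

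\textbf{Proof of ii).} Take $w = 1 = \sum_i \phi_i \in \Tilde{V}_{h,\pf}$ in \eqref{eq::MLProject} to get $\langle \pf_h, 1\rangle_{ML} = \langle \Tilde{\pf}_h, 1\rangle_{ML}$. What remains is to show that the lumped inner product with the constant $1$ equals the exact integral for element-wise linear functions. On a simplex the vertex (trapezoidal) rule is exact for $\Po^1$, and on a parallelogram quadrilateral the vertex rule is exact for bilinears. So $\langle v,1\rangle_{ML} = \int_\Omega v\,dx$ for both $v=\pf_h$ and $v=\Tilde{\pf}_h$, and dividing by $|\Omega|$ gives $m_{\pf_h} = m_{\Tilde{\pf}_h}$ by Definition~\ref{def:mass}.

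The main obstacle is this exactness of nodal quadrature on general convex polygons. For simplices and affine quadrilaterals it is standard. For general convex polygons I would restrict to the meshes of Rem.~\ref{rem:regularity} (right isosceles triangles and quadrilaterals), or subdivide into triangles, to justify both the exactness and the nonnegativity of the nodal weights. For adaptive conforming meshes nothing changes, since the argument is purely element- and node-local.
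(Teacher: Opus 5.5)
Your proposal is correct and follows the paper's proof. The paper likewise decouples the lumped projection node by node, obtaining $\Tilde{\pf}_i$ as a weighted average of the element traces $\pf_h|_K(x_i)$, and then uses the vertex-extremum argument for i). For ii) you test with $w=1$ instead of summing the explicit nodal formula and rearranging the double sum as the paper does, but both routes rest on the same fact: vertex quadrature is exact for element-wise linear functions, i.e.\ the nodal average equals the cell average. Your restriction to simplices and parallelograms is an assumption the paper's proof also relies on implicitly.
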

\begin{proof}
Let $\{ \varphi_i \}_{i=1}^{N}$ be the Lagrange basis functions spanning $\Tilde{V}_{h,\pf}$,
where $N$ is the total number of degrees of freedom in $\Tilde{V}_{h,\pf}$ corresponding
to nodes in $\grid$.
By noting that the Lagrange basis functions satisfy $\varphi_i(\mb{x}_j) = \delta_{ij}$
at the nodes $\{\mb{x}_j\}_{j=1}^{N}$, we further denote the set of elements
$\elem$ with the node $\mbx_i$ as $ I_i = \{K\in \grid: \mb{x}_i \in \partial
K\}$ and the support of $\varphi_i$ is given by $\supp(\varphi_i) = \cup_{K
\in I_i} K$.
With $\pf_\elem$ we denote the restriction of $\pf_h$ to element $\elem$ and
  for the basis expansion $\Tilde{\pf_h} = \sum_{i=1}^{N} \Tilde{\pf}_i
\varphi_i$ the coefficients $\Tilde{\pf}_i$ satisfy
\begin{equation}\label{eq:lagrangeML}
  \Tilde{\pf}_i = \frac{\sum_{\elem \in I_i} |\elem| \, \pf_\elem(\mbx_i)}{|\supp(\varphi_i)|},
\end{equation}
due to the nodal quadrature.
As a consequence we obtain a weighted average over the elements $\elem$ in $I_i$,
so that
\begin{equation}
\min_{\elem \in I_i} \pf_\elem(\mbx_i) \leq \Tilde{\pf}_i \leq \max_{\elem \in I_i} \pf_\elem(\mbx_i),
\end{equation}
 in particular  $||\Tilde{\pf}_h||_{L^\infty(\grid)} = \max_{1 \leq i \leq N}
|\Tilde{\pf}_i| \leq ||\pf_h||_{L^\infty(\grid)}$
  which concludes part \ref{enum:bounded} of the proof, boundedness, as
  linear Lagrangian basis functions attain their extrema over $K$ at the nodes.
  For part \ref{enum:mass}, mass conservation, we observe that
\begin{equation}
\int_\grid \Tilde{\pf}_h dx = \sum_{i=1}^{N} \Tilde{\pf}_i \int_{\grid} \varphi_i dx = \sum_{i=1}^{N} \frac{\sum_{\elem \in I_i}
| \elem | \pf_\elem(\mbx_i)}{|\supp(\varphi_i)|} \frac{|\supp(\varphi_i)|}{C_\elem},
\end{equation}
where the expression for $\Tilde{\pf}_i$ from Eq.\eqref{eq:lagrangeML} was
inserted in the last step. $C_\elem$ denotes the number of nodes per element $K$.
Rearranging the summation we arrive at
\begin{equation}
  \sum_{i = 1}^N \sum_{\elem \in I_i} \frac{|\elem|}{C_\elem} \pf_\elem(\mbx_i)
  = \sum_{\elem \in \grid} |\elem| \left( \sum_{i: K \in I_i} \frac{1}{C_\elem} \pf_\elem(\mbx_i) \right),
\end{equation}
  where the inner sum is over all nodes $i$ for a given element $K$, totalling to $C_\elem$ nodes.
  Since $\pf_\elem$ is affine ensures the weighted nodal average over the element $\elem$ is equivalent
to the cell average,
\begin{equation}
 \sum_{i \in K} \frac{1}{C_\elem}\pf_\elem(\mbx_i) = \frac{1}{|\elem|} \int_\elem \pf_\elem dx,
\end{equation}
and thus
\begin{equation}
  \int_\grid \Tilde{\pf}_h dx = \sum_{\elem \in \grid} |\elem| \frac{1}{|\elem|}
\int_\elem \pf_h dx = \int_\grid \pf_h dx,
\end{equation}
  which concludes part \ref{enum:mass} of the proof, mass conservation. In particular this means
  $m_{\pf_h} = m_{\Tilde{\pf}_h}$ following Def.~\ref{def:mass}.
\end{proof}

To render the \fem scheme bound preserving we introduce the
limited \fem (\femL) scheme. The continuous solution is projected
into a DG space with equal polynomial degree.
This step is exact since the continuous space is contained within the DG space.
The above described scaling limiter is then applied and
the result is projected back into the continuous space using the mass lumped
projection $\PiML$ described in Eq.~\eqref{eq::MLProject}.
The last step involves the solution of a linear system (mass matrix) which is
diagonal and can be inverted easily.
This entire procedure is detailed in Algorithm~\ref{alg:FEM}.
\begin{remark}[Mass conservation for the \femL projection]\label{rem:femLmass}
\femL scheme is mass conservative during the limiting procedure, because
$\Tilde{V}_{h,\pf} \subset V_{h,\pf}$ which means that the $L^2$ projection
of the continuous solution onto the DG space is exact, and thus, mass conservative.
  As stated in Eq.~\eqref{eq:limiter} the limiting of the DG solution is mass
conservative
  since the mean value of the solution is preserved.
  Finally, by using the mass-lumped
  projection $\PiML$ to project back to the continuous space
  we obtain mass conservation as a direct consequence of Thm.~\ref{thm:boundedness}.
\end{remark}
\begin{corollary}[Bound preservation of \femL]
The \femL scheme is bound preserving provided that the
\fem phase-field for each element $K \in \grid$ satisfies
  $\frac{1}{|K|}\int_K \tilde{\pf_h} dx \in [-1,1]$.
\end{corollary}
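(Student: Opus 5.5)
The plan is to follow the three stages of the \femL procedure and show that each stage keeps the phase-field in $[-1,1]$, or at least leads to a field that does. Let $\tilde{\pf}_h\in\Tilde{V}_{h,\pf}$ be the \fem phase-field produced by one time step. It satisfies the assumption $\pfmean\in[-1,1]$ for every $K\in\grid$.

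\textbf{Stage 1: exact embedding.} Because $\Tilde{V}_{h,\pf}\subset V_{h,\pf}$, the $L^2$-projection onto the DG space is the identity. The DG function $\pf_h$ therefore agrees with $\tilde{\pf}_h$ elementwise, and in particular has the same cell means $\pfmean\in[-1,1]$.

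\textbf{Stage 2: scaling limiter.} Fix $K$ and consider the limited function $\limpff=\stabfactor_K(\pf_K-\pfmean)+\pfmean$ from Eq.~\eqref{eq:stabsol}.
\begin{itemize}
\item The factor $\stabfactor_K$ from Eq.~\eqref{eq:limiter} lies in $[0,1]$.
\item It is well defined: at points with $\pf_K(\ics)=\pfmean$ the ratios are read as $+\infty$ (equivalently, those points are excluded).
\item Take a quadrature point $\ics\in\Lambda_K$. If $\pf_K(\ics)>\pfmean$, then $\stabfactor_K\le(\pf_{max}-\pfmean)/(\pf_K(\ics)-\pfmean)$, which gives $\limpff(\ics)\le\pf_{max}=1$. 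We also have $\limpff(\ics)\ge\pfmean\ge-1$.
\item The case $\pf_K(\ics)<\pfmean$ is symmetric and uses the $\pf_{min}$ ratio.
\end{itemize}
Here the hypothesis $\pfmean\in[-1,1]$ is what guarantees that both ratios are nonnegative and that $\pfmean$ itself lies in the admissible interval. For piecewise linear $\pf_K$ the vertices belong to $\Lambda_K$. Since $\limpff$ is affine, its extrema over the convex element $K$ are attained at the vertices, so $\|\limpff\|_{L^\infty(K)}\le1$.

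Next, the operator $\Pi_s$ in Eq.~\eqref{eq:scalinglimiter} is the elementwise $L^2$-projection of $\limpff$ onto $V_h$. Since $\limpff\in\Po^1(K)$ already, $\Pi_s[\pf_K]=\limpff$ and the projection changes nothing. This gives $\|\Pi_s\pf_h\|_{L^\infty(\grid)}\le1$.

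\textbf{Stage 3: mass-lumped projection back.} Apply Thm.~\ref{thm:boundedness}\ref{enum:bounded} to the piecewise linear function $\Pi_s\pf_h$. The result is $\|\PiML\Pi_s\pf_h\|_{L^\infty(\grid)}\le\|\Pi_s\pf_h\|_{L^\infty(\grid)}\le1$. Mass conservation of the whole procedure is already covered by Rem.~\ref{rem:femLmass}, so it need not be repeated.

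\textbf{Main obstacle.} The delicate step is Stage 2: showing the scaled function is bounded on all of $K$, not only at the points where $\stabfactor_K$ is evaluated. I would handle it with the affinity of $\Po^1$ functions together with the fact that the vertices lie in $\Lambda_K$ (or would add them there explicitly). I would also treat degenerate cases separately:
\begin{itemize}
\item a constant $\pf_K$, where $\stabfactor_K=1$;
\item $\pfmean=\pm1$, where $\stabfactor_K=0$ and $\limpff\equiv\pfmean$.
\end{itemize}
Finally, since bound preservation for the scheme means the limited iterate stays in $[-1,1]$ at every time step, applying the three stages at each step yields the corollary by induction over $n$.
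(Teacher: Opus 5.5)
Your proposal is correct and follows the paper's own three-step argument: the exact embedding $\Tilde{V}_{h,\pf}\subset V_{h,\pf}$ preserves the cell means $\pfmean$, the scaling limiter then enforces $[-1,1]$ elementwise because $\pfmean\in[-1,1]$, and the boundedness part of Thm.~\ref{thm:boundedness} shows that $\PiML$ creates no new extrema. You also check the limiter step explicitly and extend the bound from $\Lambda_K$ to all of $K$ via affinity at the vertices, which fills in details the paper only asserts; one small correction there is that the absolute values in Eq.~\eqref{eq:limiter} already make the ratios nonnegative, so what the hypothesis actually buys is that they coincide with the signed ratios you use.
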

\begin{proof}
Upon projecting $\tilde{\pf_h}$ onto $\pf_h$ they share the same cell mean value
$\pfmean$, and are equivalent due to Remark \ref{rem:femLmass}. Then the scaling
limiter in Eq.~\eqref{eq:limiter} guarantees that the DG phase-field $\pf_h$ is
bound preserving when $\pfmean \in [-1,1]$, even if the \fem phase-field violates $||\tilde{\pf_h}||_{L^\infty(\elem)} \leq 1$
  for some $K \in \grid$.
Finally, applying $\PiML$ to project back to the continuous space preserves the
bounds as shown in Thm.~\ref{thm:boundedness}.
\end{proof}
For adaptive simulations a second modification is necessary. Lagrange interpolation
can no longer be used for data transfer during adaptation,
since this is not mass conservative during the coarsening step.
Here, again the solution is projected to and from a DG space in the same way
as
for the limiting. The continuous solution is projected into a DG space before
the adaptation takes place. During the adaptation both the DG and \fem solution
are transferred using $L^2$ projection for DG and Lagrange interpolation for
\fem. After the adaptation the DG solution is then projected back to the \fem
space. Like before this means solving a diagonal linear system which is trivial.

Another issue with the \fem scheme, as pointed out in \cite{Wimmer:2025}, is
the problem
arising with advection stabilizations such as SUPG which are altering the
equations and thus might lead to incorrect solutions. This could potentially
cured with so-called Vertex Centered Finite Volume discretization for the
advection term which can be combined with a \fem discretization for the
diffusion terms. An examples can be found in \cite{diplpaper:02}. Many other
works on that
topic exist.


\subsection{The Incremental Pressure Correction Scheme}

The Incremental Pressure correction scheme (IPCS) is a splitting scheme used
as a time integration method to numerically solve the NS equations, which
is based on projection method with two auxiliary variables for the velocity
$\tilde{\mbu}$, which is not necessarily solenoidal but follows the dynamical
equation of the velocity field $\mbu$ in Eq.~\eqref{eq:ns1nd}, and an auxiliary
pressure difference $\delta P$. Firstly we present the discretization in time in a coupled setting, and then for the spatial discretization and de-coupling notes. Following a Helmholtz decomposition~\cite{Piatowski:2018},
the splitting is obtained that the auxiliary pressure difference $\delta P$ satisfies
~\cite{Guermond:2006}:
\begin{equation}
\alpha_0 \rho(\Tilde{\pf}_h^{n+1}) \left(\mb{u}^{n+1} - \tilde{\mb{u}}^{n+1}\right)
+ \dt\nabla \delta \pres^{n+1}= 0, \label{eq:decouplePressure}
\end{equation}
and
\begin{equation}\label{eq:pres}
\delta \pres^{n+1} = \pres^{n+1} - \pres^{n} + \frac{2\mu(\Tilde{\pf}_h^{n+1})}{Re}\nabla
\cdot \tilde{\mb{u}}^{n+1}.
\end{equation}
Using Helmholtz decomposition on Eq.~\eqref{eq:decouplePressure} allows decoupling
into two equation systems as $\mbu$ is solenoidal in the strong sense. We obtain, due to the solenoidal
constraint $\nabla \cdot \mb{u}^{n+1} = 0$, that
\begin{equation}\label{eq:pressurePoisson}
-\alpha_0\nabla \cdot \tilde{\mb{u}}^{n+1} + \dt \nabla \cdot \left(\frac{\nabla
\delta \pres^{n+1}}{\rho(\Tilde{\pf}_h^{n+1})} \right) = 0,
\end{equation}
which allows for solvability of each respective variable. \par

 Without loss of generality, it is assumed that $\tilde{\mbu}^{n+1}$ and
$\delta P^{n+1}$ share the same boundary conditions as $\mbu^{n+1}$ and $P^{n+1}$ respectively.
The scheme is formulated to project an auxiliary velocity $\tilde{\mbu}^{n+1}$ onto
a space where the physical velocity $\mbu^{n+1}$ satisfies the solenoidal constraint
and to ensure stability in the numerical solution. The RIPCS consists of four
main steps: solving the momentum equation for $\tilde{\mbu}^{n+1}$, solve for the
pressure difference $\delta P^{n+1}$, solving for the physical velocity $\mbu^{n+1}$, and
updating the pressure $P^{n+1}$. \par

\noindent For the spatial discretization we consider a Taylor-Hood $\mathbb{P}^2 \setminus
\mathbb{P}^1$ inf-sup stable \fem basis space
  for the velocity
  $[V_{h,\mbu}]^d$, where each component of $\mbu_h$ is in $V_{h,\mbu}$, and a scalar function space $V_{h,P}$ for the pressure $P_h$ respectively. Before
proceeding, the skew-symmetric formulation
\begin{equation}
\mbu \cdot \nabla \mbu = \mbu \cdot \nabla \mbu + \frac{1}{2}\left(\nabla \cdot
\mbu\right) \mbu,
\end{equation}
is introduced. Because $\mbu$ is solenoidal, this does not modify the strong
formulation of the governing equations in Eq.~\eqref{eq:ns2nd}. \par
\begin{remark}\label{rem:RT}
While $\mbu$ is solenoidal, this does not mean that the \fem approximation $\mbu_h$
necessarily
is solenoidal everywhere in $\grid \times (0,T]$. We refer to~\cite{Piatowski:2018}
for a scheme that provides this property with the usage of Raviart-Thomas elements.
\end{remark}
Denoted by $\mbu_h, \tilde{\mbu}_h \in [V_{h, \mbu}]^d$ are the \fem velocity
field and auxiliary velocity field respectively, while $\tilde{P}_h, \delta
P_h, \in V_{h, P}$ represented the \fem approximation of the pressure and auxiliary
pressure difference. Moreover, the zero-mean pressure field will be represented
by $P_h$. Before proceeding, we introduce some auxiliary notation to account for the
decoupled algorithms used in Algs.\ref{alg:DG}-\ref{alg:FEM}. This is due to a Strang splitting which utilized by
evaluating the discrete NS equations at time-step $t^{n+\tfrac{1}{2}}$, then solving the CH
equations at time $t^{n+1}$, and then again solving the NS equations at time $t^{n+1}$. To
facilitate the decoupled evaluation lag, we introduce:
\begin{equation}
\pf_h^{\tilde{n}} = \begin{cases}
  \pf_h^{n+1} & \text{if } \tilde{n} \in \mathbb{N}, \\
  \pf_h^{n} & \text{if } \tilde{n} \not \in \mathbb{N},
\end{cases}
\end{equation}
which will be used for the phase-field related variables $\pf_h, \mb{S}_h, \mb{J}_h$, the two later
are discrete symbols used to describe the density-flux and surface tension introduced with Eq.~\eqref{eq:normalizedCH} and for particular forms of the surface tensions in Eqs.~\eqref{eq:korteweg} and \eqref{eq:sigma1}.
 We then proceed with the discrete formulation. The auxiliary velocity field $\tilde{\mbu}_h^n$ is the solution of
the weak form of Eq.~\eqref{eq:ns2}:
\begin{align}\label{eq:weakm}
  &\frac{2\innerProdSmall{\rho(\Tilde{\pf}_h^{n + \Theta})(\alpha_0\tilde{\mbu}_h^{n + \Theta}
+ \alpha_1 \mb{u}_h^{n + \Theta -\tfrac{1}{2}})}{\mb{v}}}{\dt}
+ \innerProdSmall{\mb{J}_h^{n + \Theta} \cdot \nabla \tilde{\mbu}_h^{n + \Theta}}{\mb{v}} \quad \forall \mb{v} \in V_{h, \mb{u}}
\notag \\ &- \innerProdSmall{\pres_h^{n + \Theta - \tfrac{1}{2}}\mathbb{I}}{D(\mb{v})} + \innerProdSmall{\rho(\Tilde{\pf}_h^{n + \Theta})( \mbu_h^{n + \Theta - \tfrac{1}{2}} \cdot \nabla
\tilde{\mbu_h^{n + \Theta}} + 0.5(\nabla \cdot \mbu_h^{n + \Theta - \tfrac{1}{2}} \tilde{\mbu}_h^{n + \Theta}}{\mb{v}}  \\
  & + \frac{2}{Re}\innerProdSmall{\mu(\Tilde{\pf}_h^{n + \Theta})D(\tilde{\mb{u}}_h^{n + \Theta})}{D(\mb{v})}
= \innerProdSmall{\mb{S}_h^{\tilde{n}}}{\mb{v}} + Fr^{-1}\innerProdSmall{\rho(\Tilde{\pf}_h^{n + \Theta }) \hat{\mathbf{g}}}{\mb{v}},
\notag
\end{align}
\noindent for $\Theta \in \{\tfrac{1}{2},1\}$ as a consequence of the Strang splitting. To ensure consistency and particularly to
maintain continuity of the density $\rho$, viscosity $\mu$, surface tension $\mb{S}_h$, and density-flux $\mb{J}_h$,
we reconstruct a continuous phase-field $\Tilde{\pf}_h = \PiML \pf_h$ and chemical potential $\Tilde{\chem}_h = \PiML \chem_h$
 whenever a DG method
is used to compute the CH variables $\pf_h$ and $\chem_h$.
In light of Thm.~\ref{thm:boundedness}, the projection of $\pf_h$ under $\PiML$
is both mass-conservative and bound preserving.

Continuing, $\delta P^{n+\Theta}$ is found via Eq.~\eqref{eq:pressurePoisson}:
\begin{equation}\label{eq:weakpois}
\dt\innerProd{\frac{\nabla \delta \tilde{P}_h^{n + \Theta}}{\alpha_0  \rho(\Tilde{\pf}_h^{n + \Theta})}}{\nabla
q} = -2\innerProdSmall{\nabla \cdot \tilde{\mb{u}}_h^{n + \Theta}}{q}, \quad \forall q \in
V_{h,P}.
\end{equation}
However, since Eq.~\eqref{eq:weakpois} is a weak Poisson equation with Neumann
boundary conditions $\mathbf{n} \cdot \nabla \delta \pres_h |_{\partial \Omega}
= 0$, the discrete auxiliary variable $\delta \pres_h^n$ may only be uniquely defined in
$V_{h,P}$ up to a time-dependent constant $C^n$.  This problem is solved shortly with Eq.~\eqref{eq:pL2}
for the physical pressure $\pres_h^{n + \Theta}$. Then $\mb{u}_h^{n + \Theta}$ is found through
\begin{equation}\label{eq:weaksol}
2\alpha_0\innerProd{\rho(\Tilde{\pf}_h^{n + \Theta}) \left(\mb{u}_h^{n + \Theta} - \tilde{\mb{u}}_h^{n + \Theta}\right)}{\mb{v}}
-  \dt\innerProd{\nabla \delta \pres^{n + \Theta}_h}{\mb{v}} = 0, \quad \forall \mb{v}
\in [V_{h, \mb{v}}]^d.
\end{equation}
Then, the (non-unique) pressure $\tilde{P}_h^{n + \Theta}$ is recovered from the weak form
of Eq.~\eqref{eq:pres}
\begin{equation}\label{eq:pressupdate}
\innerProdSmall{\tilde{\pres}_h^{n + \Theta}}{q} = Re^{-1}\innerProd{\mu(\Tilde{\pf}^{n + \Theta}_h)\nabla
\cdot \mb{u}_h^{n + \Theta}}{q} - \innerProdSmall{\delta \pres_h^{n + \Theta} + \pres_h^{n + \Theta - \tfrac{1}{2}}}{q},
\quad \forall q \in V_{h,\pres}.
\end{equation}
Lastly, to find a unique representation of the pressure $P_h^n$ the following
equation is applied:
\begin{equation}\label{eq:pL2}
P_h^{n + \Theta} = \tilde{P}_h^{n + \Theta} - \frac{1}{|\grid|}\int_\grid \tilde{P}_h^{n + \Theta} dx
\end{equation}
which is equivalent to finding the pressure $P_h^{n + \Theta}$ in the solution space  $V_{h,P}
\cap L^2_0(\grid)$ for solutions of Eq.~\eqref{eq:pressupdate} as a post-processing
measure.


\subsection{Algorithm}

In this section the different schemes are presented in an algorithmic way to
make very clear in which order the different parts of the algorithm are
executed.

\begin{algorithm}[H]
\caption{Time-integration for the DG schemes}\label{alg:DG}
\begin{algorithmic}[1]
\State~Start at $t = 0$ with initial values $\pf^0$ projected onto $(\pf_h^0,
\Tilde{\pf}_h^0)$ and $\mbu^0$ projected onto $\mbu_h^0$. Then at $t = \dt$
the iteration:
\While{$t < T$}
    \If{NS}
      \State Project $(\pf^n_h, \chem_h^n)  \to (\Tilde{\pf}_h^n, \Tilde{\chem}_h^n)$
      \State~Solve sequentially for $(\tilde{\mbu}_h^{n+\frac{1}{2}}, \delta P_h^{n+\frac{1}{2}},
\mbu_h^{n+\frac{1}{2}}, P_h^{n+\frac{1}{2}})$ in Eqs.~\eqref{eq:weakm}-\eqref{eq:pL2}
    \EndIf
    \State~Solve for $(\pf_h^{n+1}, \chem_h^{n+1})$ in Eq.~\eqref{eq:ch2ndt}
with \swip or \sipg
    \If{\swipL or \sipgL}
        \State~Apply limiter: $\pf_h^{n+1} \gets \Pi_s[\pf_h^{n+1}]$ following
Eq.~\eqref{eq:scalinglimiter}
    \EndIf
    \If{NS}
      \State~Project $(\pf^{n+1}_h, \chem_h^{n+1})  \to (\Tilde{\pf}_h^{n+1},
\Tilde{\chem}_h^{n+1})$
      \State~Solve sequentially for $(\tilde{\mbu}_h^{n+1}, \delta P_h^{n+1}, \mbu_h^{n+1},
P_h^{n+1})$ in Eqs.~\eqref{eq:weakm}-\eqref{eq:pL2}
    \EndIf
    \State~Advance time: $t \gets t + \dt$
\EndWhile
\end{algorithmic}
\end{algorithm}
\begin{algorithm}[H]
\caption{Time-integration for the \asu scheme}\label{alg:AS}
\begin{algorithmic}
\State~Start at $t = 0$ with initial values $\pf^0$ projected onto $(w_h^0,
\Tilde{\pf}_h^0)$ and $\mbu^0$ projected onto $\mbu_h^0$. Then at $t = \dt$
the iteration:
\While{$t < T$}
    \If{NS}
    \State~Solve sequentially for $(\tilde{\mbu}_h^{n+\frac{1}{2}}, \delta P_h^{n+\frac{1}{2}},
\mbu_h^{n+\frac{1}{2}}, P_h^{n+\frac{1}{2}})$ in Eqs.~\eqref{eq:weakm}-\eqref{eq:pL2}
    \EndIf
    \State~Solve for $(w_h^{n+1}, \Tilde{\pf}_h^{n+1}, \Tilde{\chem}_h^{n+1})$
in Eq.~\eqref{eq:acosta3}
    \If{NS}
        \State~Solve sequentially for $(\tilde{\mbu}_h^{n+1}, \delta P_h^{n+1},
\mbu_h^{n+1},P_h^{n+1})$ in Eqs.~\eqref{eq:weakm}-\eqref{eq:pL2}
    \EndIf
    \State~Advance time: $t \gets t + \dt$
\EndWhile
\end{algorithmic}
\end{algorithm}
\begin{algorithm}[H]
\caption{Time-integration for the \fem schemes}\label{alg:FEM}
\begin{algorithmic}[1]
\State~Start at $t = 0$ with initial values $\pf^0$ projected onto $\Tilde{\pf}_h^0$
  and $\mbu^0$ projected onto $\mbu_h^0$. Then at $t = \dt$
the iteration:
\While{$t < T$}
     \If{NS}
    \State~Solve sequentially for $(\tilde{\mbu}_h^{n+\frac{1}{2}}, \delta P_h^{n+\frac{1}{2}},
\mbu_h^{n+\frac{1}{2}}, P_h^{n+\frac{1}{2}})$ in Eqs.~\eqref{eq:weakm}-\eqref{eq:pL2}
    \EndIf
    \State~Solve for $(\Tilde{\pf}_h^{n+1}, \Tilde{\chem}_h^{n+1})$ in Eq.~\eqref{eq:ch2ndt}
with CG
    \If{\femL}
      \State~Project $(\Tilde{\pf}_h^{n+1}, \Tilde{\chem}_h^{n+1}) \to (\pf^{n+1}_h,
\chem^{n+1}_h)$
      \State~Apply limiter: $\pf_h^{n+1} \gets \Pi_s[\pf_h^{n+1}]$ following
Eq.~\eqref{eq:scalinglimiter}
      \State~Project $(\pf^{n+1}_h, \chem^{n+1}_h) \to (\tilde{\pf}_h^{n+1},
\tilde{\chem}_h^{n+1})$
    \EndIf
    \If{NS}
    \State~Solve sequentially for $(\tilde{\mbu}_h^{n+1}, \delta P_h^{n+1}, \mbu_h^{n+1},
P_h^{n+1})$ in Eqs.~\eqref{eq:weakm}-\eqref{eq:pL2}
    \EndIf
    \State~Advance time: $t \gets t + \dt$
\EndWhile
\end{algorithmic}
\end{algorithm}
Since $\chem_h^0$ is unknown at initialization, we initiate it with $ \chem_h^0 = W'(\pf^0)$ with
$\pf^0$ the initial condition for the phase-field $\pf$ when
the NS equations are solved at $t = \frac{\dt}{2}$.
We note that for Alg.~\ref{alg:DG} the choice for the CH equations is scheme-dependent
between \sipgL and \swipL. We also consider the standard non-limited schemes,
and denote these schemes as \sipg and \swip respectively.
A standard \fem scheme and the post-processing step from Eq.~\eqref{eq:clipped_pf}
for cut-off \femC, will also be tested for benchmarking. The latter is of
particular interest to see how the cut-off affects the mass $m_\pf$ due to the
enforcements of the bounds.  Moreover, When convection does not evolve over
time, then the NS part of Algs.~\ref{alg:DG} and~\ref{alg:AS} is skipped. These
algorithms were utilized for solving the numerical experiments outlined in Sec.~\ref{sec:numerics}.


\subsection{Discrete level expectations}\label{sec:discreteLevel}
\begin{table}[h]
  \renewcommand{\arraystretch}{1.5}
  \begin{center}
  \caption{Physical properties at different levels with quartic potential $W$ and degenerate mobility $M$.}
    \label{tab:setting}
  \begin{tabular}{lccl}
    Setting &    energy dissipative & mass conservation &  boundedness  \\ \hline\hline
    Strong   &          \yes         &        \yes       &   \ \ \no (\cite[Rem. 3.5]{Eikelder:2024})      \\
    Weak     &          \yes         &        \yes     &     \ \ \yes (Thm.\ref{pro:max})    \\
  \end{tabular}
  \end{center}
\end{table}
\begin{table}[h]
  \renewcommand{\arraystretch}{1.5}
  \begin{center}
  \caption{Provability of properties for different schemes. In particular we utilize the symbols \question\xspace and \cond\xspace to indicate whether the property is unknown, or holds conditionally, respectively. Specifically for \cond\xspace we require that $\pfmean \in [-1,1], \forall \elem \in \grid$.}
    \label{tab:scheme_expect}
  \begin{tabular}{lcccc}
    Scheme &    energy dissipative & mass conservation &  boundedness  & $k>1$ \\ \hline\hline
    \fem   &          \yes         &        \yes       &     \no       &  \yes    \\
    \femC  &          \question         &        \question        &     \yes      &  \yes    \\
    \sipg &           \question         &        \yes       &     \question       &  \yes    \\
    \swip &           \question         &        \yes       &     \question       &  \yes    \\\hline
    \asu   &          \yes         &        \yes       &     \yes      &  \no     \\
    \femL  &          \question         &        \yes     &     \cond      &  \yes    \\
    \sipgL &          \question         &        \yes       &     \cond      &  \yes    \\
    \swipL &          \question        &        \yes       &     \cond      &  \yes    \\
  \end{tabular}
  \end{center}
\end{table}
Tab.~\ref{tab:scheme_expect} summarize the specific details for the schemes studied later
in Sec.~\ref{sec:numerics} and Tab.~\ref{tab:setting} summarize the properties the CH
equations with the quartic potential and degenerate mobility formulation obtains on
the strong, which is called the PDE level in~\cite[Remark 3.5]{Eikelder:2024}, versus weak level. The goal in Sec.~\ref{sec:numerics} is to compare the schemes and properties in~\ref{tab:scheme_expect} to preserve the weakly obtained quantities via numerical observations. \par
The significance of unknown properties marked in Tab.~\ref{tab:scheme_expect}
are due to the lack of direct proofs which we outline below. For the semi-discrete energy
dissipation for \sipg and \swip we refer
to Thm.~\ref{thm:cont_eneg} and the discussion regarding the contrast
$\lambda$ in
Rem.~\ref{rem:dgparams}. Since we require a specific set of penalty parameters
$\eta, \Lambda_e$ following Rem.~\ref{rem:dgparams} to provide coercivity
from Thm.~\ref{thm:coer} for $b(M(\pf_h), \cdot, \cdot)$  we thus do
not have an unconditional result. For the \femC scheme, it is unclear how the
 cut-off
procedure in Eq.~\eqref{eq:clipped_pf} affects the energy dissipation and mass
conservation, however, we will see that it has significant effect on the mass
conservation in the numerical experiments as is found in Tab.~\ref
{tab:phase-field-metrics} and is discussed in Rem.~\ref{rem:femC}. Finally,
when we consider the limited schemes \femL,
\sipgL, \swipL, the boundedness condition is only guaranteed if the cell
averages
$\pfmean$ satisfy boundedness, and moreover, it is unclear how the limiting
procedure affects the energy dissipation as the energy $\mathcal{E}$ is not a
contracted under limiting, and thus, not necessarily preserved or decreasing.
Note also, in this paper, we do not obtain a velocity field $\mbu_h$
which necessarily is divergence-free as stated in Rem.~\ref{rem:RT} when
using the
IPCS scheme for the NS equations with a \fem basis pair. Since the
upwinding flux introduced in Eq.~\eqref{eq:adv2} does not fullfil
the upwinding conditions in~\cite[Proposition 2.8]{Acosta:2021}, consequently
also failing the assumptions in Thm.~\ref{thm:asubd}, and thus may lead to
boundedness not being observed for the \asu scheme. A similar concern
arise for DG-based schemes regarding upwinding, however, we find that for \swipL and \sipgL
boundedness is still observed
in Sec.~\ref{sec:numerics-rotating} and~\ref{sec:numerics-rising}, and only
minor violations occur for Sec.~\ref{sec:numerics-rising} for the second test
case highlighted in Tab.~\ref{tab:tc_params}.
Moreover, as also stated in Rem.~\ref{rem:asu_eneg}, it is unclear if the
specified decoupled formulation for the \asu scheme unconditionally
satisfies a discrete energy dissipation law. A similar argument applies
for the other CH schemes as well, since the decoupled nature of the
algorithm and the Strang splitting may lead to a lack of energy
dissipation. Regardless, we note in Sec.~\ref{sec:numerics} that for all
studied schemes, energy dissipation is numerically observed in the
experiments in the decoupled setting.

\section{Numerics}\label{sec:numerics}
In this section, numerical examples are presented to illustrate the performance
of the proposed schemes. Four scenarios are considered:
\begin{itemize}\label{item:scenarios}
  \item In Sec.~\ref{sec:accuracy} accuracy test with a manufactured solution to verify the convergence rates
of selected schemes.
  \item In Sec.~\ref{sec:numerics-noconv} Cahn-Hilliard simulations without advection to compare the performance of
the schemes in terms of mass conservation, energy dissipation, and boundedness
    are presented.
  \item In Sec.~\ref{sec:numerics-rotating} Cahn-Hilliard-Navier-Stokes simulations to evaluate the best performing schemes
in a coupled setting with fluid flow are presented.
  \item In Sec.~\ref{sec:numerics-rising} Cahn-Hilliard-Navier-Stokes simulation with aggressive adaptive mesh refinements
to demonstrate the robustness of the best performing scheme in a challenging setting
following the benchmark in \cite{Hysing:2009}.
\end{itemize}
For the specific type of grids levels, we utilize the grid-width metric $h$ from
Eq.~\eqref{eq:gridWidthInExp} as a reference for the fineness of the grid
$\grid$. Since we use mesh adaptivity we pick $h$ at the finest level during
initialization. The type of elements $\elem \in \grid$ are mentioned and
characterized.
\subsection{Software implementation}
The numerical examples are implemented in the open-source
software package \dunefem \cite{dunereview:21, dunefemdg:21} at version $2.11$
with its \code{Python} interface \cite{Dedner:2020}.
The \code{Python} bindings utilize the Unified Form Language (UFL) \cite{Alnaes:2014}
to formulate weak forms of PDEs like other popular packages, for example, \texttt{FeniCS(X)}~\cite{Baratta:2023} or \texttt{Firedrake}~\cite{Ham:2023}. \par


\noindent For the solvers, we utilize the \texttt{DUNE-ISTL} library and its
implementation of \texttt{GMRES} \cite{duneistl:08}. The tolerances were chosen sufficiently small
for mass conservation to be observed \footnote{Exact solvers would exactly provide
mass conservation. However, to solve the problems within reasonable time-scales
a small tolerance was utilized for the non-linear solver.}.
\subsection{Adaptivity}
For these examples, a triangulation of the spatial domain $\Omega$ is generated
using the \dunealugrid module~\cite{alugrid:16} and with \texttt{aluConformGrid}~\cite{weakcomp:17}
and \texttt{aluCubeGrid}, both allowing for grid adaptivity.
The numerical experiments are conducted using
grids with various triangle and square sizes, and the specific grid width $h$ is specified
for each example. When grid adaptivity is employed, the adaptation process starts
from an initial grid with grid-width $h_{\max}$ and allows refinement up to
a finer grid with grid-width $h_{\min}$. The adaptivity
criterion is governed by a normalized indicator function of the form
\begin{equation}
H(\pf_h) = \frac{\left(1 - \bar{\pf}_h^2\right)}{4}, \qquad \bar{\pf}_h = \frac{\pf_h}{||\bar{\pf}_h^\star||_\infty},
\end{equation}
with $\bar{\pf}_h^\star$ from Eq.~\eqref{eq:clipped_pf} for normalization as $||\pf_h||_\infty \approx 1$.  Refinement occurs when $H(\pf_h) < 0.0525$, and coarsening is suggested when
$H(\pf_h) > 0.15$. Adaptivity is used after a select number of time-steps for each
example and is specified. When adaptivity is not present, we utilize a uniform grid
with grid-width $h$. During the initialization, sufficient adaptation steps are performed
until the number of elements does no longer change. Once the initial grid is established
the variables are projected onto the new space via an $L^2$-projection.
We constrain the initial phase-field profile to satisfy
$||\pf(\mbx, 0)||_\infty= 0.99$ (as suggested in \cite{Frank:2020}) before grid refinement, and
this ensures that the initial data do not cause contrast issues for coercivity
in Thm.~\ref{thm:coer} when utilizing DG schemes. We note that for some small $\tilde{\delta}
> 0$ and initial data $||\pf(\mbx, 0)||_\infty = 1 - \tilde{\delta}$ we have $M(||\pf(\mbx,
0)||_\infty) = 2\tilde{\delta} - \tilde{\delta}^2$ which could break coercivity for
our pick of $\lambda^\star$ in Remark \ref{rem:dgparams}. Overall, this could lead
to prohibitive restriction for the coercivity constraint. \par

\noindent Linear Lagrange basis functions (\code{lagrange})
are used for the continuous space.
In the \asu scheme in Eq.\eqref{eq:acosta3}, a piece-wise constant basis is used
for the variable $w_h$. For the CH-related DG variables in Algorithm~\ref{alg:DG}
through the \dunefemdg \cite{dunefemdg:21} module we used a linear Legendre basis functions (\code{dglegendre}) in the experiment Sec.~\ref{sec:accuracy}, ortho-normalized monomial basis (\code{dgonb})
was used for the experiments in Secs.~\ref{sec:numerics-noconv}--\ref{sec:numerics-rising}.

\subsection{Accuracy Test}\label{sec:accuracy}
\begin{example}\label{ex:exact}
We consider the CH Eqs.~\eqref{eq:ch1nd} and ~\eqref{eq:ch2nd} with constant
velocity $\mathbf{u} = (1,0)^T$ in the domain $\Omega = [0,1]^2$. The initial
condition is given by:
\begin{equation}
    \pf(\mbx,0) = 0.99 \left(\prod_{j = 1}^2\tanh\left( \frac{\mbx_j - a_j }{
3Cn}\right) - \tanh\left(\frac{\mbx_j - (1 - a_j)}{3Cn}\right)\right) - 0.99,
\end{equation}
where $a_1 =0.4$, $a_2 =0.2$, Cahn number $Cn = 0.01$, and Peclet number $Pe
= 4000$. The simulation is run for $ t \leq T = 10^{-3}$.
\end{example}
For this problem we establish fixed Cahn and Peclet numbers since having them $h$-dependent
would cause the solution to vary depending on the level, and thus, accuracy would
be compromised.
 We introduce $\pf_I(\mbx, t) = \pf(\mbx - \mathbf{u}t, 0)$ and add the forcing
term:
\begin{equation}
    S(\mbx, t) = -\nabla \cdot( Pe^{-1}M(\pf_I(\mbx, t)) \nabla( W(\pf_I(\mbx, t))
- Cn^2 \Delta \pf_I(\mbx,t))),
\end{equation}
to the right-hand side of Eq.~\eqref{eq:ch1nd}. Then, thanks to the method of manufactured
solutions, $\pf_I$ is the exact solution to the modified CH equations. A similar
problem was studied in~\cite{Wimmer:2025} wherein they derived that adding a forcing
term to the CH equations
leads to a modified mass and energy rate due to the forcing,
and with the respective boundary terms. For that purpose, only the Error of Convergence (EOC)
of selected schemes will
be regarded in this analysis and the non-linear tolerance $\epsilon = 10^{-10}$ was chosen. For this problem we utilized quadrilateral elements
for all schemes, with periodic boundary conditions for DG schemes (\sipgL and \swipL),
and
Dirichlet boundary conditions, with the corresponding exact solution for $\pf_I$
at the boundary, for \asu, \fem, and \femL while $\chem$ has Neumann boundary conditions.
The time increment was chosen as
$\dt
= 32 \cdot 10^{-5} h$ for all simulations. Moreover, for the EOC in the $H^1$-norm
a projected phase-field $\tilde{\pf_h} \in H^1(\grid)$ was used for all reported
schemes, while for the $L^2$-norm we used the phase-field variable $\pf_h$ ($w_h$
for \asu, as was also done in \cite{Acosta:2021}) from each scheme. \par

\begin{table}[htbp]
  \centering
  \caption{EOC for selected schemes ($L^2$-norm, $H^1$-norm,
and timing). Rate denotes the time increase factor when halving $h$ and $\dt$.}
  \label{tab:eoc_combined}
  \small
  \begin{tabular}{lcccccccc}
    \toprule
    Method & $h$ & \multicolumn{2}{c}{L2} & \multicolumn{2}{c}{H1} & \multicolumn{2}{c}{Timing} \\
    \cmidrule(lr){3-4} \cmidrule(lr){5-6} \cmidrule(lr){7-8}
     &  & Error & EOC & Error & EOC & Time (s) & Ratio \\
    \addlinespace[0.5em]
    \midrule
    \addlinespace[0.5em]
    \multirow{5}{*}{\asu}      & $\frac{1}{32}$ & 7.17e-02 &    --- & 2.45 &    --- & 8.0 &    --- \\
             & $\frac{1}{64}$ & 3.61e-02 &   0.99 & 1.16 &   1.08 & 50.8 &   6.36 \\
             & $\frac{1}{128}$ & 1.81e-02 &   1.00 & 0.56 &   1.06 & 359.1 &   7.07 \\
             & $\frac{1}{256}$ & 9.05e-03 &   1.00 & 0.27 &   1.02 & 3251.8 &   9.06 \\
             & $\frac{1}{512}$ & 4.53e-03 &   1.00 & 0.14 &   1.01 & 30203.1 &   9.29 \\
    \addlinespace[0.5em]
    \midrule
    \addlinespace[0.5em]
    \multirow{5}{*}{\fem}      & $\frac{1}{32}$ & 2.06e-02 &    --- & 2.09 &    --- & 1.4 &    --- \\
             & $\frac{1}{64}$ & 5.39e-03 &   1.94 & 1.07 &   0.96 & 12.0 &   8.50 \\
             & $\frac{1}{128}$ & 1.36e-03 &   1.98 & 0.54 &   0.99 & 88.9 &   7.43 \\
             & $\frac{1}{256}$ & 3.42e-04 &   2.00 & 0.27 &   1.00 & 1105.0 &  12.43 \\
             & $\frac{1}{512}$ & 8.57e-05 &   2.00 & 0.13 &   1.00 & 15682.8 &  14.19 \\
    \addlinespace[0.5em]
    \midrule
    \addlinespace[0.5em]
    \multirow{5}{*}{\femL}    & $\frac{1}{32}$ & 5.12e-02 &    --- & 2.65 &    --- & 1.8 &    --- \\
             & $\frac{1}{64}$ & 1.38e-02 &   1.89 & 1.24 &   1.09 & 14.0 &   7.91 \\
             & $\frac{1}{128}$ & 3.26e-03 &   2.08 & 0.57 &   1.13 & 102.3 &   7.28 \\
             & $\frac{1}{256}$ & 5.97e-04 &   2.45 & 0.27 &   1.06 & 1432.6 &  14.00 \\
             & $\frac{1}{512}$ & 1.13e-04 &   2.40 & 0.14 &   1.01 & 17727.6 &  12.37 \\
    \addlinespace[0.5em]
    \midrule
    \addlinespace[0.5em]
    \multirow{5}{*}{\sipgL}   & $\frac{1}{32}$ & 9.48e-03 &    --- & 2.19 &    --- & 2.5 &    --- \\
             & $\frac{1}{64}$ & 2.24e-03 &   2.08 & 1.10 &   1.00 & 21.9 &   8.91 \\
             & $\frac{1}{128}$ & 5.48e-04 &   2.03 & 0.54 &   1.02 & 158.0 &   7.20 \\
             & $\frac{1}{256}$ & 1.36e-04 &   2.01 & 0.27 &   1.01 & 1413.3 &   8.95 \\
             & $\frac{1}{512}$ & 3.42e-05 &   2.00 & 0.14 &   1.00 & 14200.9 &  10.05 \\
    \addlinespace[0.5em]
    \midrule
    \addlinespace[0.5em]
    \multirow{5}{*}{\swipL}   & $\frac{1}{32}$ & 9.48e-03 &    --- & 2.19 &    --- & 2.4 &    --- \\
             & $\frac{1}{64}$ & 2.24e-03 &   2.08 & 1.10 &   1.00 & 21.0 &   8.67 \\
             & $\frac{1}{128}$ & 5.48e-04 &   2.03 & 0.54 &   1.02 & 173.2 &   8.26 \\
             & $\frac{1}{256}$ & 1.36e-04 &   2.01 & 0.27 &   1.01 & 1327.8 &   7.67 \\
             & $\frac{1}{512}$ & 3.42e-05 &   2.00 & 0.14 &   1.00 & 14600.1 &  11.00 \\
    \addlinespace[0.5em]
    \bottomrule
  \end{tabular}
\end{table}
Tab. \ref{tab:eoc_combined} summarizes the EOC for the \asu, \fem, \femL, \sipgL,
and \swipL schemes. The results indicate that all schemes
achieve expected convergence rates.
For the \asu scheme which utilizes a piece-wise constant basis
for the variable $w_h$, the \asu schemes achieves a convergence rate of approximately
$\mathcal{O}(h)$
(as is also reported in \cite{Acosta:2021}), while the other schemes, which
employ piece-wise linear bases, achieve the expected convergence rate of approximately
$\mathcal{O}(h^2)$. All the tested schemes have the expected convergence rate
$\mathcal{O}(h)$ in the $H^1$-norm. These findings align with the theoretical
expectations for the respective basis function orders used in each scheme, in
particular, for the \asu scheme when projected onto a piece-wise linear basis following
the EOC analysis in \cite{Acosta:2021}.
For \femL the errors in the $L^2$-norm are worse compared to \fem, which may be attributed
to the fact that the limiter modifies the solution in a way that reduces accuracy,
especially near steep gradients.

Finally, we also report the computation time for each simulation.
The \fem scheme is the fastest followed by \femL at a specific refinement level,
while the DG schemes and the \asu
scheme are generally the slowest due to the increased number of degrees of freedom.
On the other hand, the DG schemes produce solutions for $h=\frac{1}{64}$
with a comparable $L^2$ error of \fem and \femL for $h=\frac{1}{128}$, which means a significantly
reduced computation time for the DG schemes.
 We note that this computation
time is highly dependent on the solver tolerances, preconditioner, and implementations,
and thus should only be used as a rough estimate.

\subsection{Cahn-Hilliard schemes without advection}
\label{sec:numerics-noconv}
\begin{example} \label{ex:noconv}
We consider the CH Eqs. \eqref{eq:ch1nd} and \eqref{eq:ch2nd} without advection,
i.e., $\mathbf{u}(\cdot, \cdot) = \mb{0}$, in the domain $\Omega = [0,1]^2$.
The initial condition is given by a smooth profile
\begin{equation}
    \pf(\mbx,0) = 0.99 \left(2\min\left\{\left(1 + 2^{-1}\sum_{j=1}^{2} \tanh\left(\frac{r
- || \mathbf{x} - \mathbf{c}_j||}{\sqrt{2}Cn}\right)
\right),1\right\} - 1\right),
\end{equation}
where $r = 0.2$ is the droplet radius, with central points $\mathbf{c}_1 = (0.3,
0.5)^T$ and $\mathbf{c}_2 = (0.7, 0.5)^T$, Cahn number $Cn = h$, and Peclet
number $Pe^{-1} = 3 Cn$. The simulation is run for $ t \leq T = 0.4$.
\end{example}
\noindent A similar study was conducted in~\cite{Acosta:2021} (and many more) and
is used to
demonstrate the similarity of the schemes and also to assess performance with
respect to physical relevance following their preservation of physical laws.
We set the coarsest
grid-width as $h_{\max} = \frac{1}{32}$ and the finest grid-width as $h_{\min} =
\frac{1}{126}$. The adaptivity of the grid is performed with the lowest level
at $h_{\max} = \frac{1}{32}$, with the highest level $h_{\min} \in \{\frac{1}{64},
\frac{1}{128}\}$. The problem is also simulated on a uniform grid with $h =
\frac{1}{64}$ without adaptivity. We pick $\dt = 64 \cdot 10^{-3} h_{\min}$ as
the time increment. When applicable, the grid is refined every $5^{\text{th}}$ time step.
\begin{figure}[H]
\centering
\subfloat[$t = 0$\label{fig:sol10}]{\includegraphics[width=0.22\textwidth]{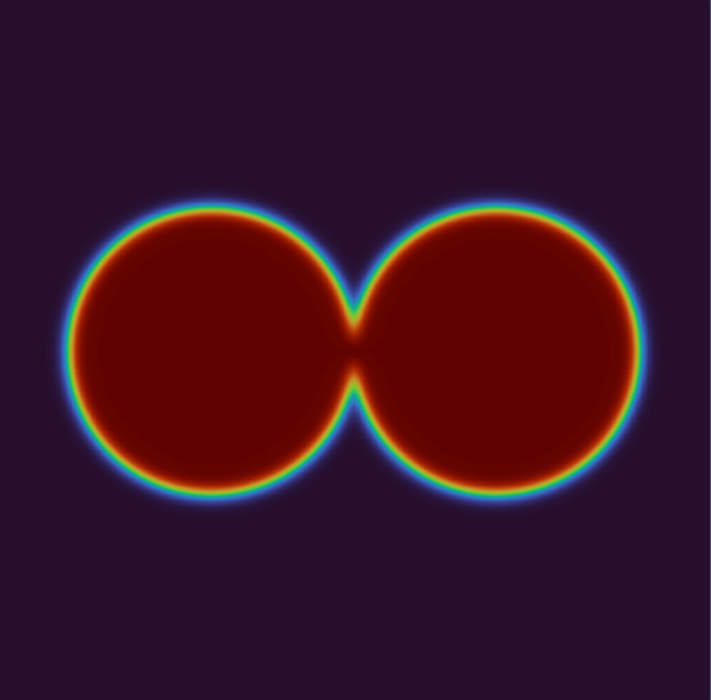}}
\subfloat[$t = 0.1$\label{fig:sol11}]{\includegraphics[width=0.22\textwidth]{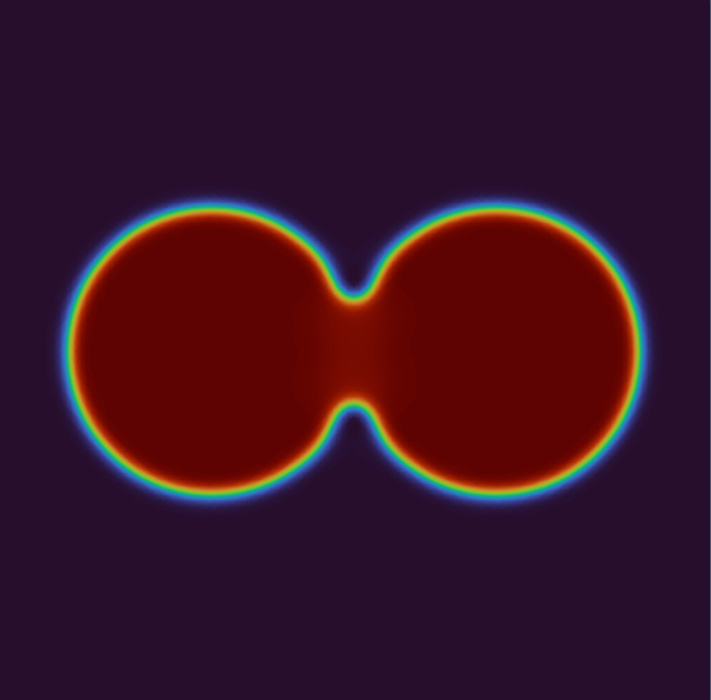}}
\subfloat[$t = 0.2$\label{fig:sol12}]{\includegraphics[width=0.22\textwidth]{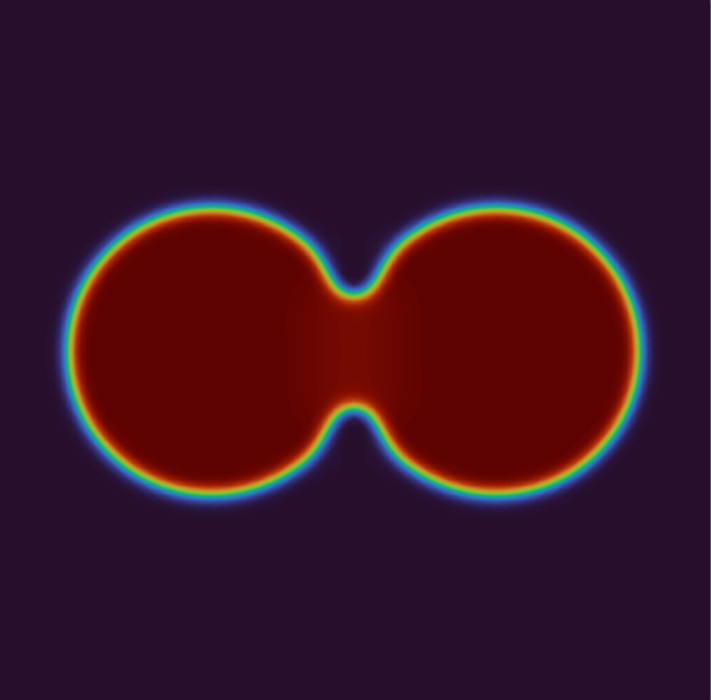}}
\subfloat[$t = 0.4$\label{fig:sol13}]{\includegraphics[width=0.22\textwidth]{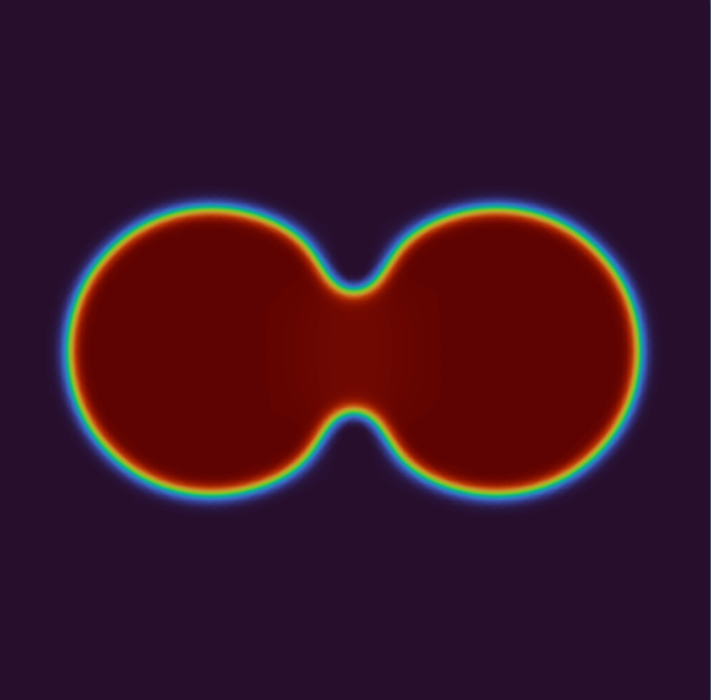}}
\caption{\swipL: Evolution of the phase-field $\pf_h$ at different time steps
for the finest grid.} \label{fig:sol00}
\end{figure}
\begin{figure}[H]
\centering
\subfloat[$t = 0$\label{fig:0sol10}]{\includegraphics[width=0.22\textwidth]{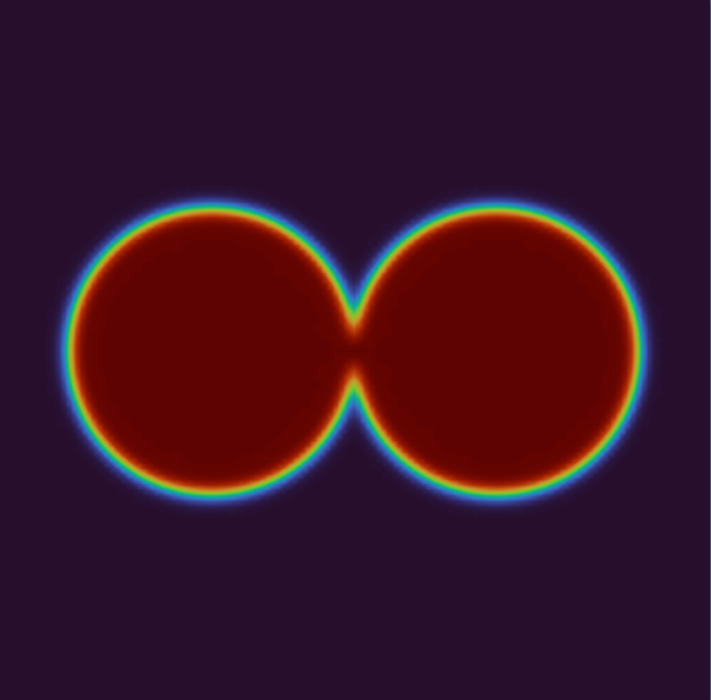}}
\subfloat[$t = 0.1$\label{fig:0sol11}]{\includegraphics[width=0.22\textwidth]{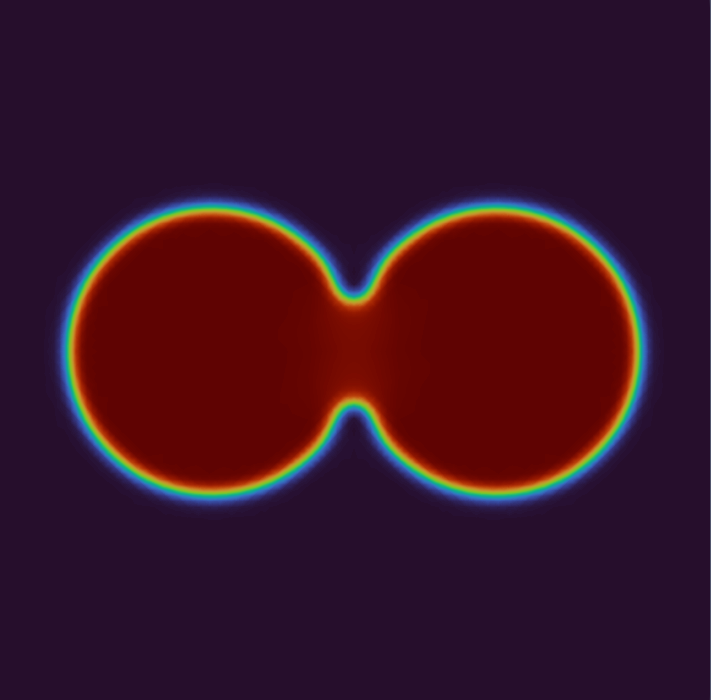}}
\subfloat[$t = 0.2$\label{fig:0sol12}]{\includegraphics[width=0.22\textwidth]{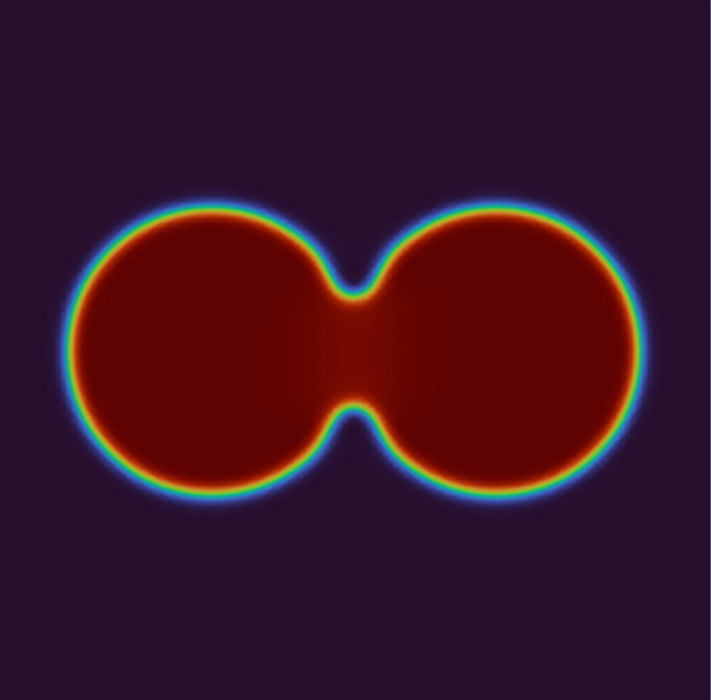}}
\subfloat[$t = 0.4$\label{fig:0sol13}]{\includegraphics[width=0.22\textwidth]{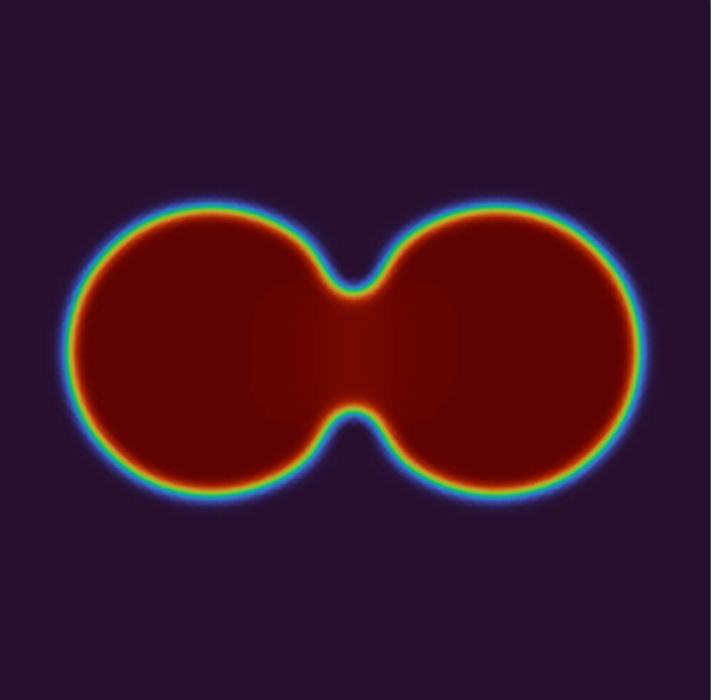}}
\caption{\asu: Evolution of the phase-field $\pf_h$ at different time steps for the finest grid.} \label{fig:sol01}
\end{figure}

\begin{figure}[H]
\centering
\subfloat[DG schemes]{\includegraphics[width=0.49\textwidth]{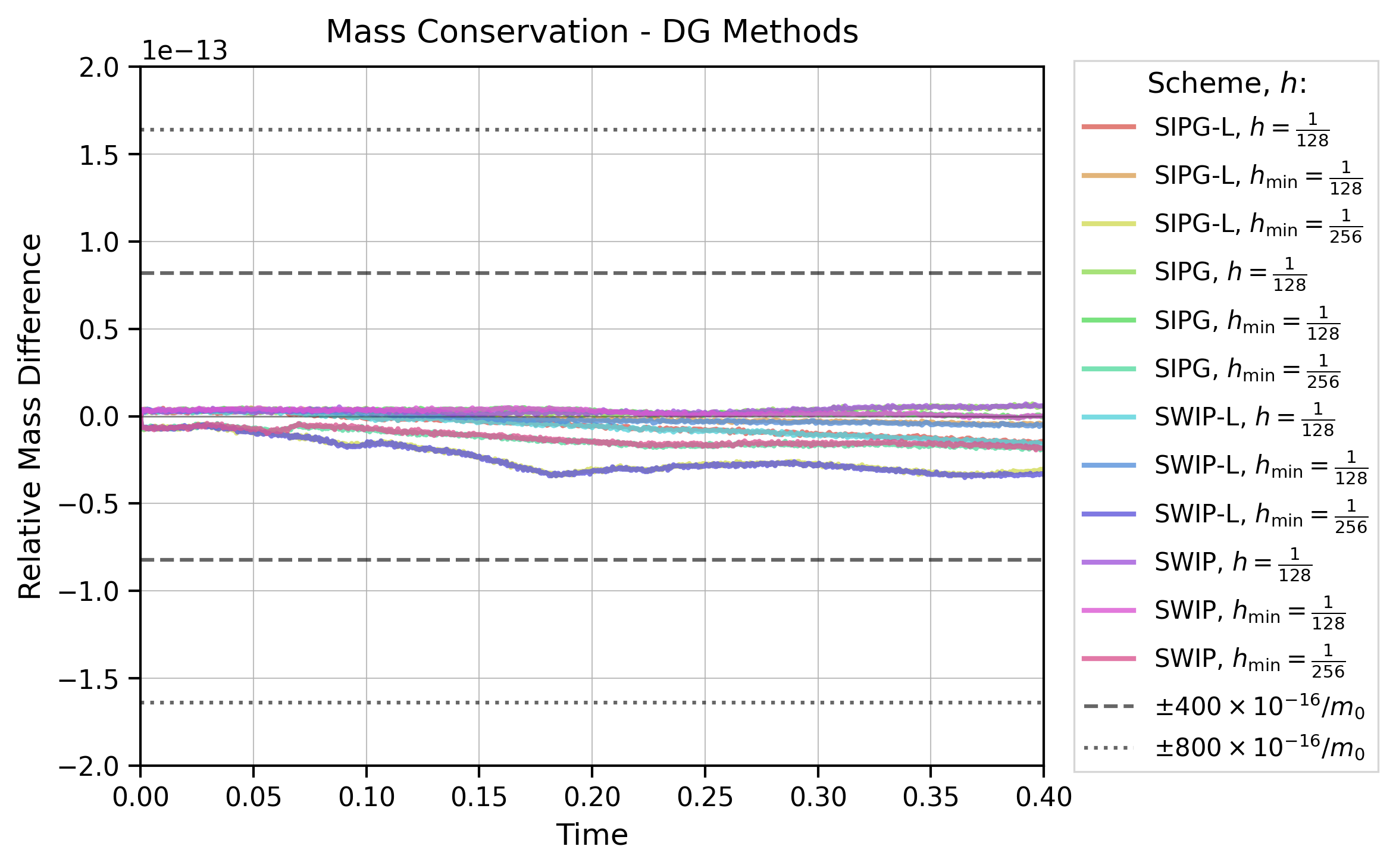}}
\subfloat[\fem-based schemes]{\includegraphics[width=0.49\textwidth]{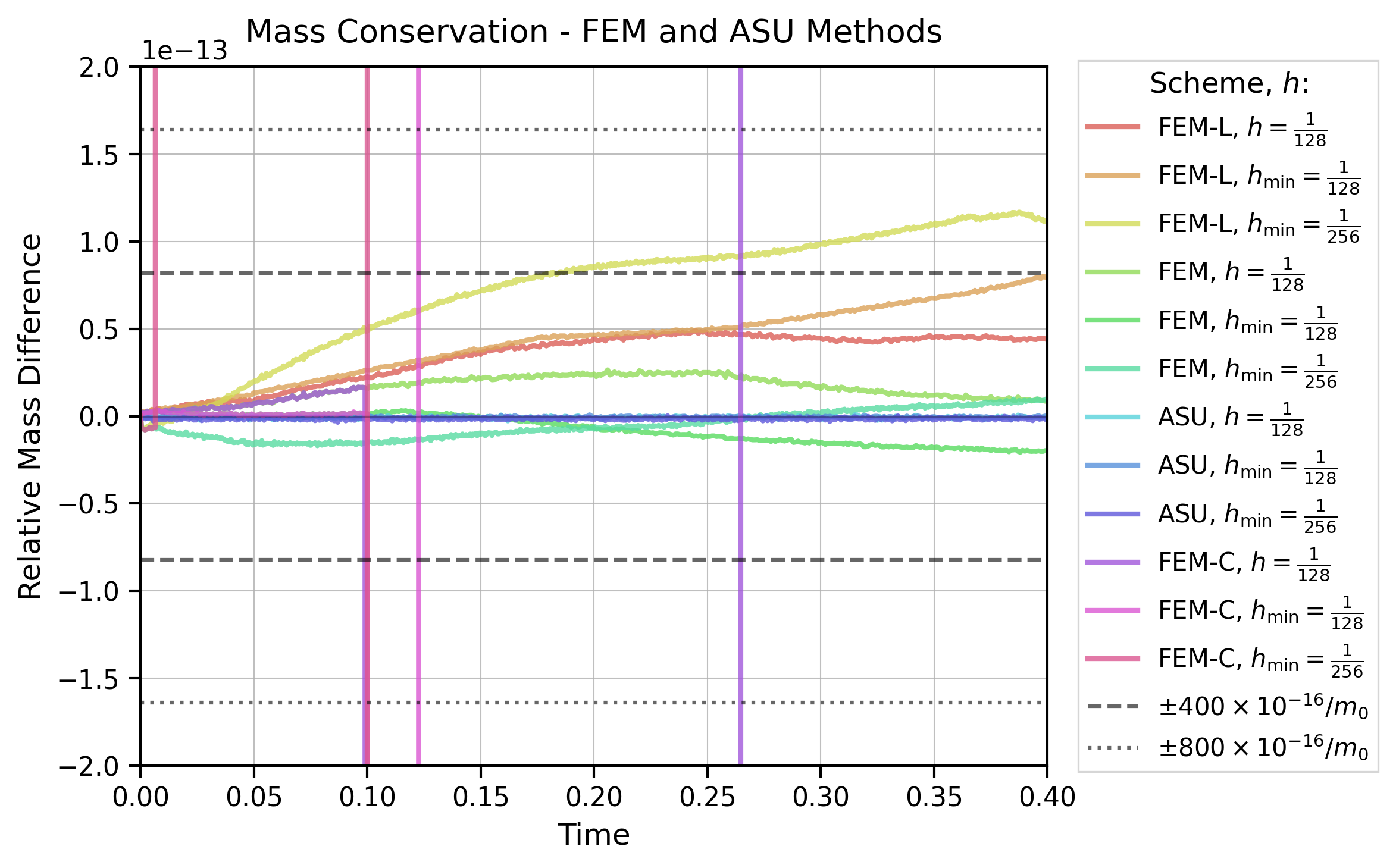}}
\caption{Mass conservation comparison between DG and \fem-based schemes.
  For \femC we see a clear deviation from the initial mass, while \femL preserves it in a tolerance-sense run-off sense. } \label{fig:1mass}
\end{figure}

\begin{figure}
\centering
\subfloat[DG schemes]{\includegraphics[width=0.49\textwidth]{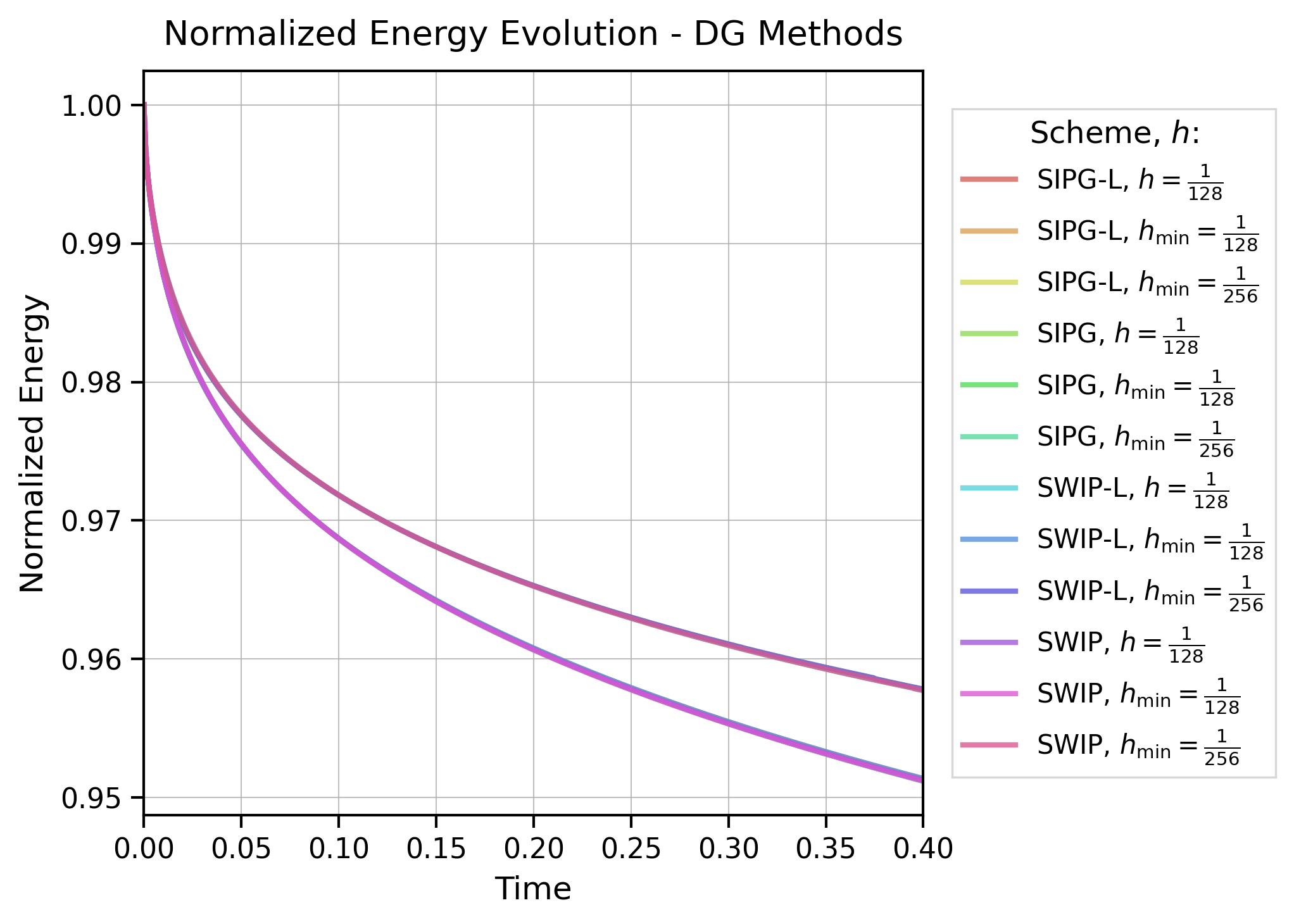}}
\subfloat[\fem-based schemes]{\includegraphics[width=0.49\textwidth]{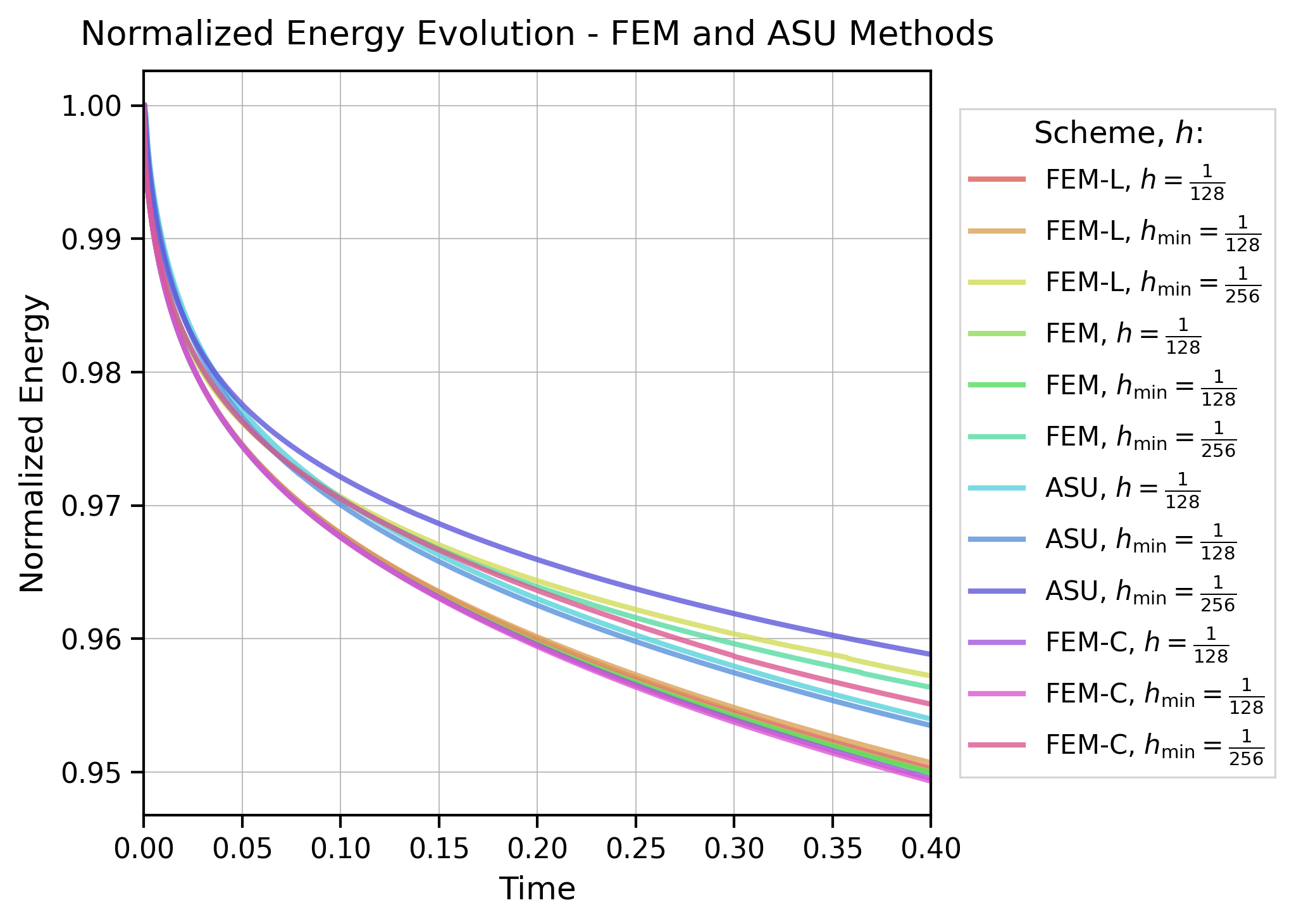}}
\caption{Energy dissipation comparison between DG and \fem-based schemes.} \label{fig:1eneg}
\end{figure}

\begin{figure}[H]
\centering
\subfloat[DG schemes]{\includegraphics[width=0.49\textwidth]{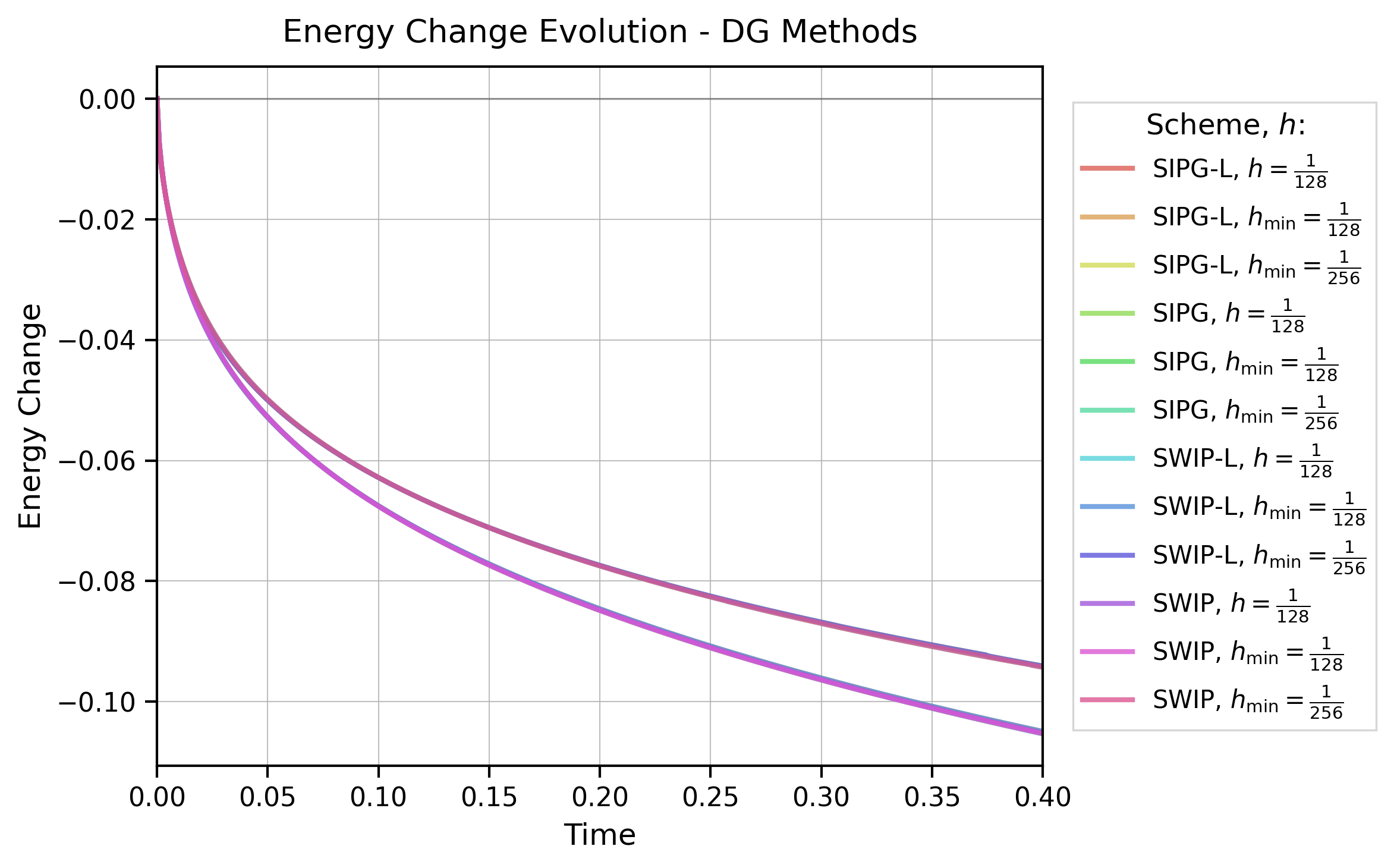}}
\subfloat[\fem-based schemes]{\includegraphics[width=0.49\textwidth]{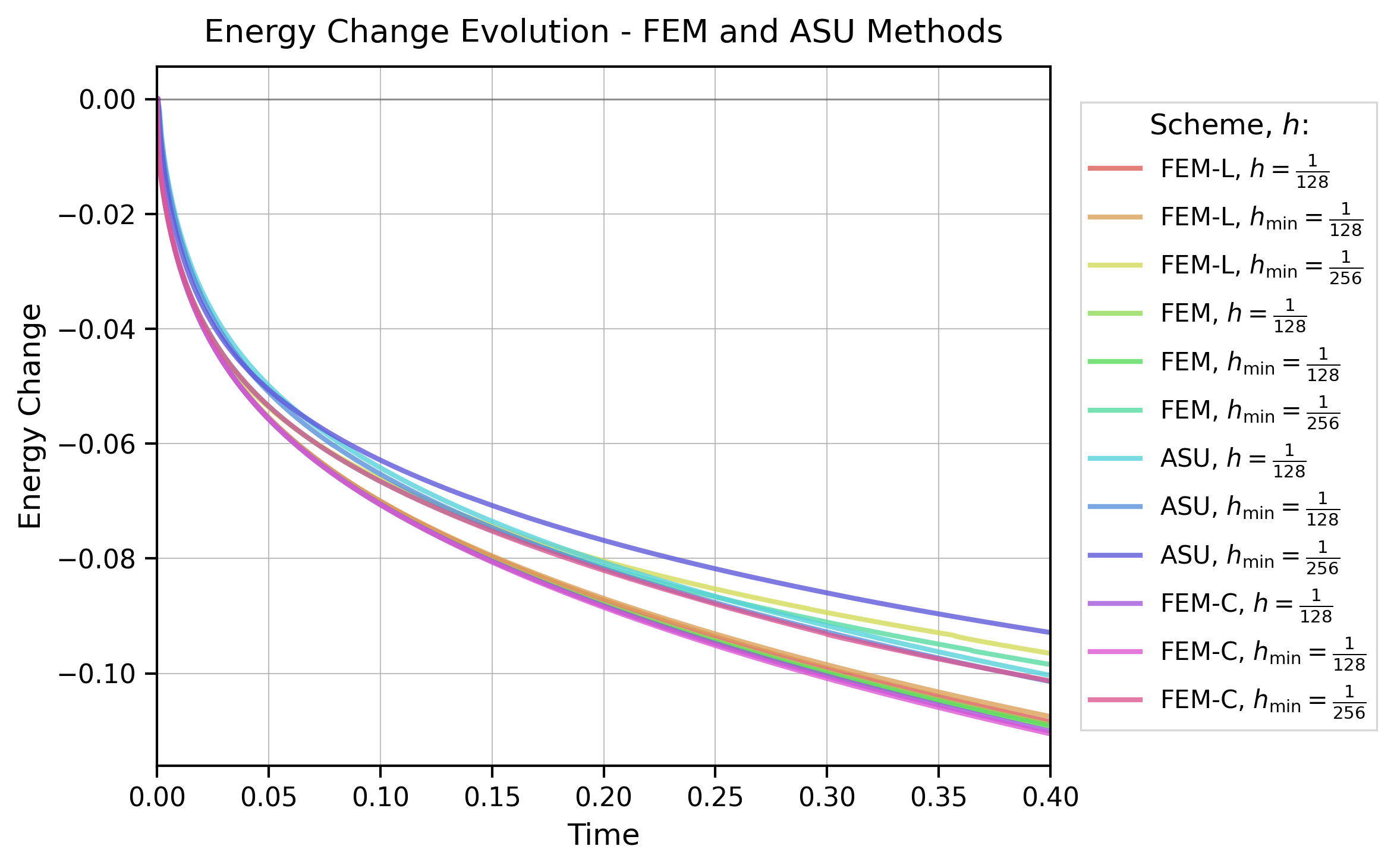}}
\caption{Energy difference comparison between DG and \fem-based schemes.} \label{fig:1eneg2}
\end{figure}

\begin{figure}[H]
\centering
\subfloat[DG schemes]{\includegraphics[width=0.49\textwidth]{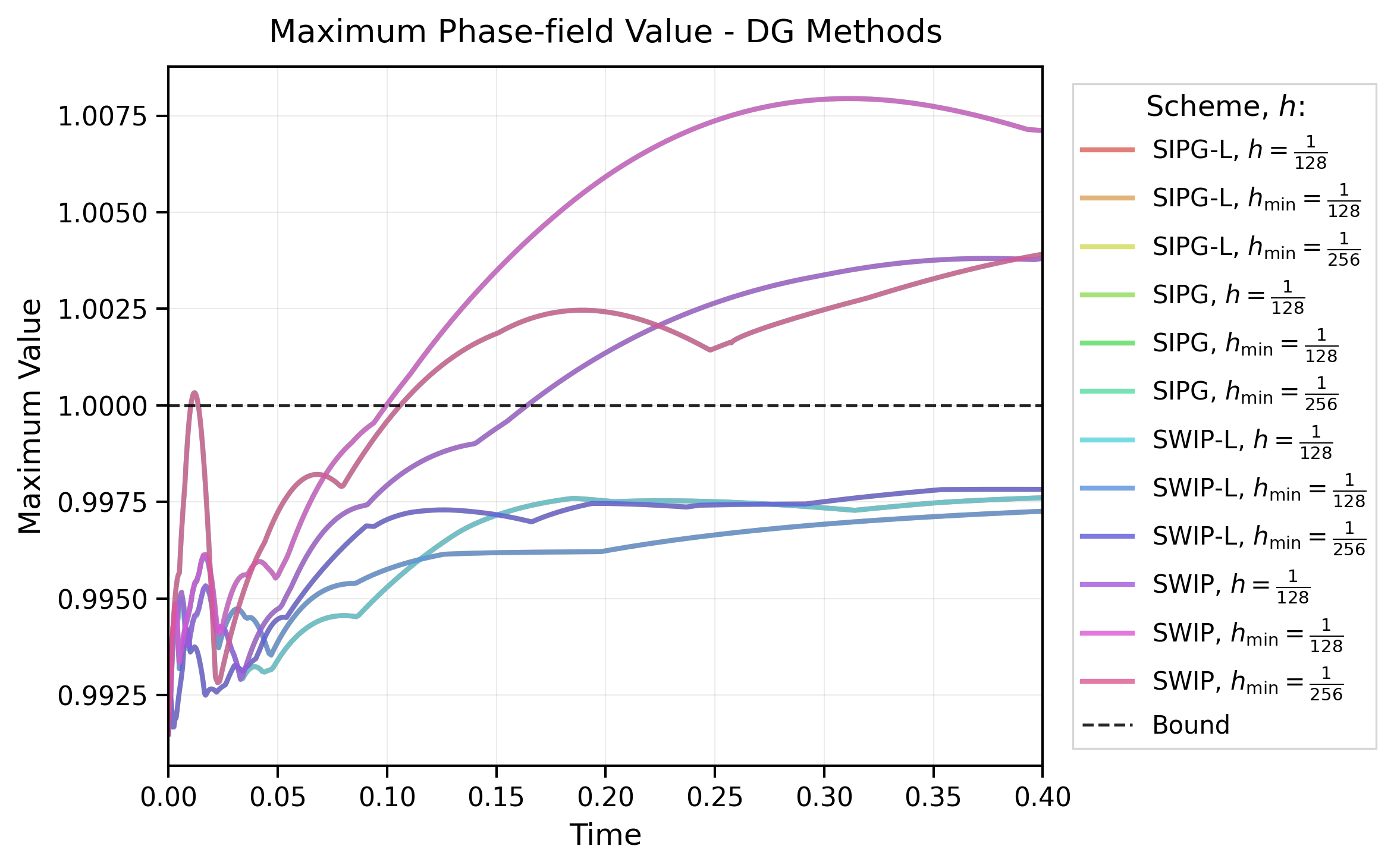}}
\subfloat[\fem-based schemes]{\includegraphics[width=0.49\textwidth]{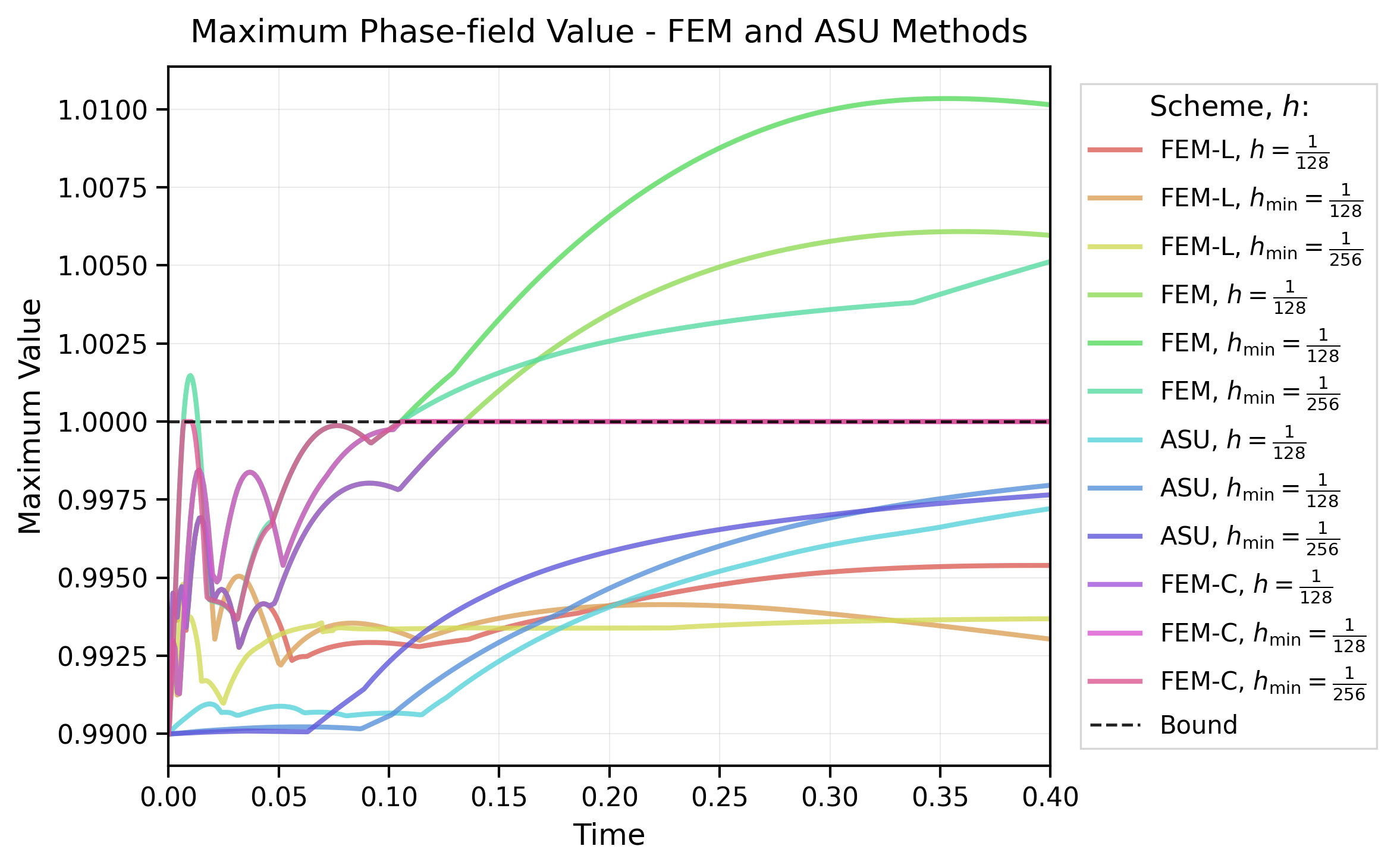}}
\caption{Maximal value comparison between DG and \fem-based schemes.
  For the unlimited schemes \fem, \sipg and \swip we see a clear violation of
  the bounds.}
  \label{fig:1max}
\end{figure}

\begin{figure}[H]
\centering
\subfloat[DG schemes]{\includegraphics[width=0.49\textwidth]{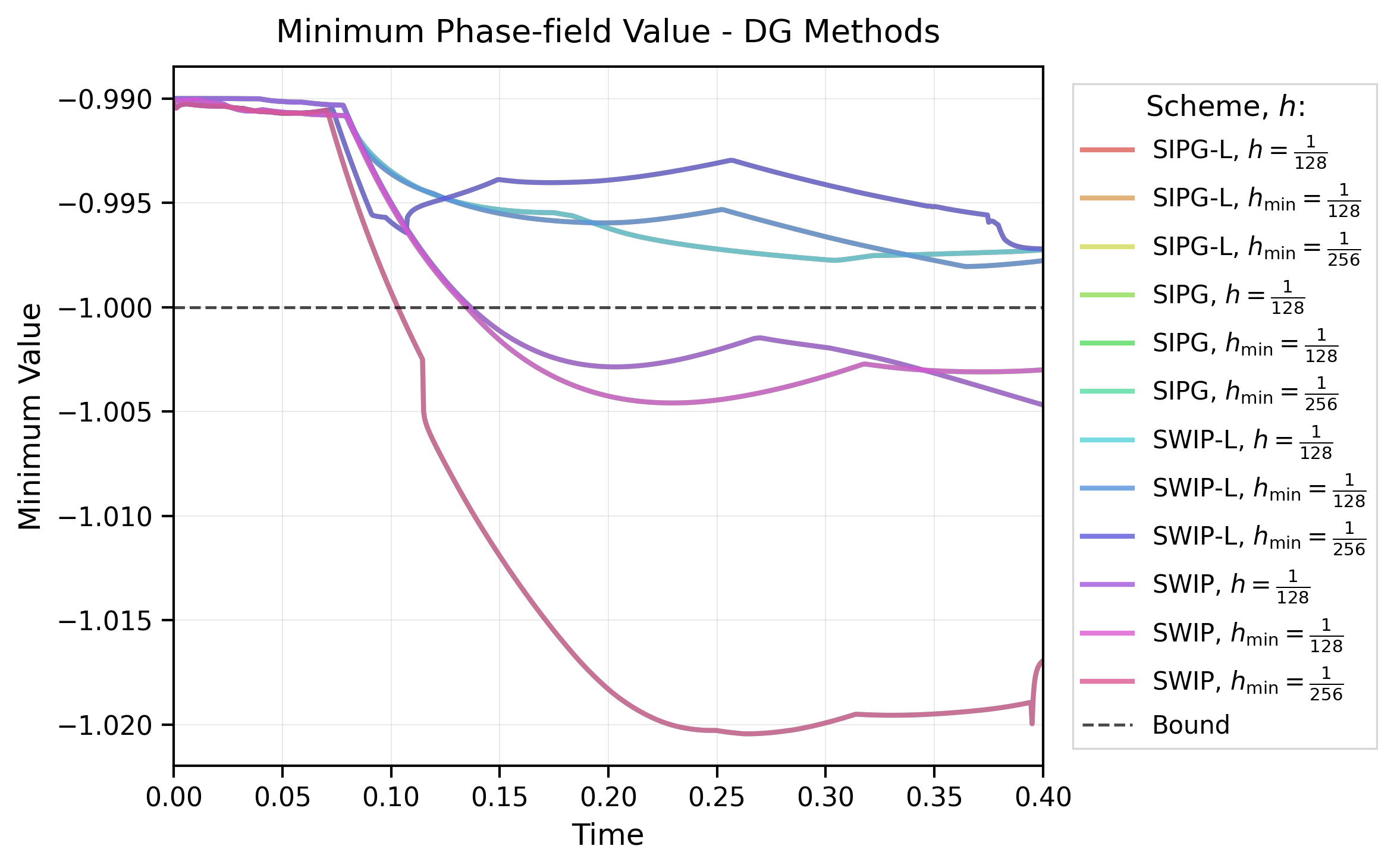}}
\subfloat[\fem-based schemes]{\includegraphics[width=0.49\textwidth]{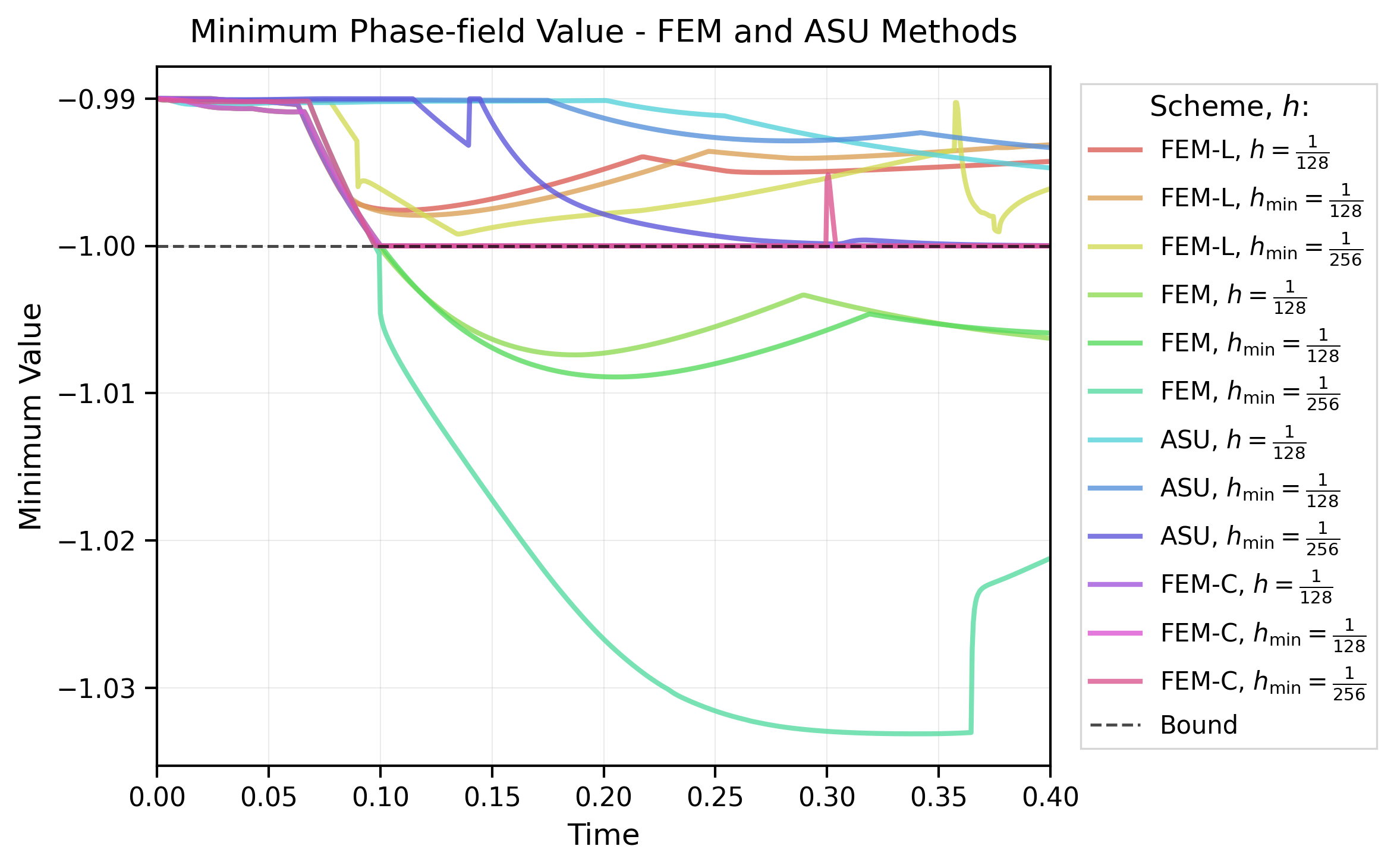}}
\caption{Minimal value comparison between DG and \fem-based schemes
  For the unlimited schemes \fem, \sipg and \swip we see a clear violation of
  the bounds.}
  \label{fig:1min}
\end{figure}
\begin{table}[ht]
\centering
\caption{Summary of numerical metrics for phase-field schemes under different
  mesh configurations. The relative mass loss/gain is defined to be
  $\Delta{m_{\pf_h}}(t) := \frac{|m_{\pf_h}(t) - m_{\pf_h}(0)|}{|m_{\pf_h}(0)|}$
and we define relative mass loss as $||\Delta m_{\pf_h}||_\infty =\underset{t \in
[0,T]}{\max}\Delta{m_{\pf_h}(t)}$. Violations are highlighted in red.}
\label{tab:phase-field-metrics}
  {
  \small
\begin{tabular}{lllrrr}
\toprule
  Scheme &  $h$ & \ \ $||\Delta m_{\pf_h}||_\infty$ & $\underset{t \in [0,T]}{\min}
\pf_h(\cdot, t)$ & $\underset{t  \in [0,T]}{\max} \pf_h(\cdot, t)$ & Time (s) \\
\midrule
\multirow{3}{*}{\sipgL} & $h_{\min} = \frac{1}{64}$ & $5.2393 \times 10^{-15}$ & -0.99804782 & 0.99726112 & 44.38 \\
 & $h = \frac{1}{64}$ & $1.5604 \times 10^{-14}$ & -0.99775165 & 0.99760906 & 120.32 \\
 & $h_{\min} = \frac{1}{128}$ & $3.4135 \times 10^{-14}$ & -0.99719848 & 0.99782816 & 155.57 \\
\midrule
\multirow{3}{*}{\swipL} & $h_{\min} = \frac{1}{64}$ & $5.9227 \times 10^{-15}$ & -0.99804787 & 0.99726022 & 47.07 \\
 & $h = \frac{1}{64}$ & $1.6401 \times 10^{-14}$ & -0.99775097 & 0.99760895 & 116.13 \\
 & $h_{\min} = \frac{1}{128}$ & $3.5152 \times 10^{-14}$ & -0.99719920 & 0.99782816 & 154.15 \\
\midrule
\multirow{3}{*}{\sipg} & $h_{\min} = \frac{1}{64}$ & $4.8976 \times 10^{-15}$ & \inred{-1.00458509} & \inred{1.00794452} & 46.22 \\
 & $h = \frac{1}{64}$ & $7.0617 \times 10^{-15}$ & \inred{-1.00467895} & \inred{1.00381195} & 171.24 \\
 & $h_{\min} = \frac{1}{128}$ & $1.9328 \times 10^{-14}$ & \inred{-1.02043591} & \inred{1.00391726} & 173.63 \\
\midrule
\multirow{3}{*}{\swip} & $h_{\min} = \frac{1}{64}$ & $4.8976 \times 10^{-15}$ & \inred{-1.00458297} & \inred{1.00794268} & 45.41 \\
 & $h = \frac{1}{64}$ & $7.0617 \times 10^{-15}$ & \inred{-1.00467828} & \inred{1.00381176} & 115.28 \\
 & $h_{\min} = \frac{1}{128}$ & $1.9328 \times 10^{-14}$ & \inred{-1.02043496} & \inred{1.00391645} & 138.83 \\
\midrule
\multirow{3}{*}{\asu} & $h_{\min} = \frac{1}{64}$ & $1.3668 \times 10^{-15}$ & -0.99333183 & 0.99796029 & 76.49 \\
 & $h = \frac{1}{64}$ & $2.7335 \times 10^{-15}$ & -0.99470691 & 0.99721163 & 169.82 \\
 & $h_{\min} = \frac{1}{128}$ & $3.5039 \times 10^{-15}$ & -0.99998059 & 0.99765396 & 252.20 \\
\midrule
\multirow{3}{*}{\fem} & $h_{\min} = \frac{1}{64}$ & $2.0616 \times 10^{-14}$ & \inred{-1.00889442} & \inred{1.01034251} & 18.78 \\
 & $h = \frac{1}{64}$ & $2.6538 \times 10^{-14}$ & \inred{-1.00739590} & \inred{1.00608418} & 38.61 \\
 & $h_{\min} = \frac{1}{128}$ & $1.6955 \times 10^{-14}$ & \inred{-1.03311392} & \inred{1.00512445} & 65.15 \\
\midrule
  \multirow{3}{*}{\femC} & $h_{\min} = \frac{1}{64}$ & $\inred{5.7020 \times 10^{-4}}$ & -1.00000000 & 1.00000000 & 20.23 \\
  & $h = \frac{1}{64}$ & $\inred{1.7391 \times 10^{-4}}$ & -1.00000000 & 1.00000000 & 35.93 \\
  & $h_{\min} = \frac{1}{128}$ & $\inred{2.5226 \times 10^{-4}}$ & -1.00000000 & 1.00000000 & 64.96 \\
\midrule
\multirow{3}{*}{\femL} & $h_{\min} = \frac{1}{64}$ & $8.0073 \times 10^{-14}$ & -0.99792446 & 0.99842463 & 21.95 \\
 & $h = \frac{1}{64}$ & $4.8862 \times 10^{-14}$ & -0.99758033 & 0.99689722 & 42.54 \\
 & $h_{\min} = \frac{1}{128}$ & $1.1687 \times 10^{-13}$ & -0.99919739 & 0.99478944 & 76.99 \\
\bottomrule
\end{tabular}
}
\end{table}
\begin{figure}[ht]
\centering
\subfloat[$h_{\min} = \frac{1}{64}$]{\includegraphics[width = 0.495\textwidth]{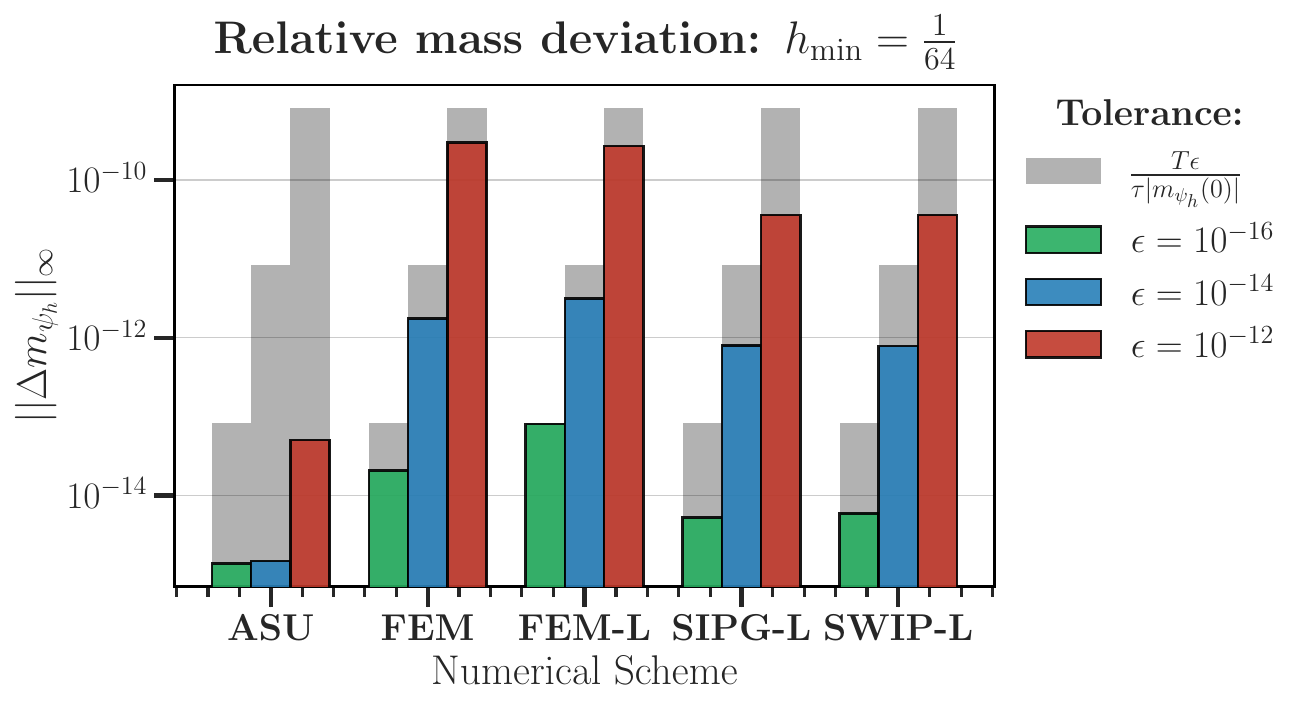}}
\hfill
\subfloat[$h = \frac{1}{64}$]{\includegraphics[width = 0.495\textwidth]{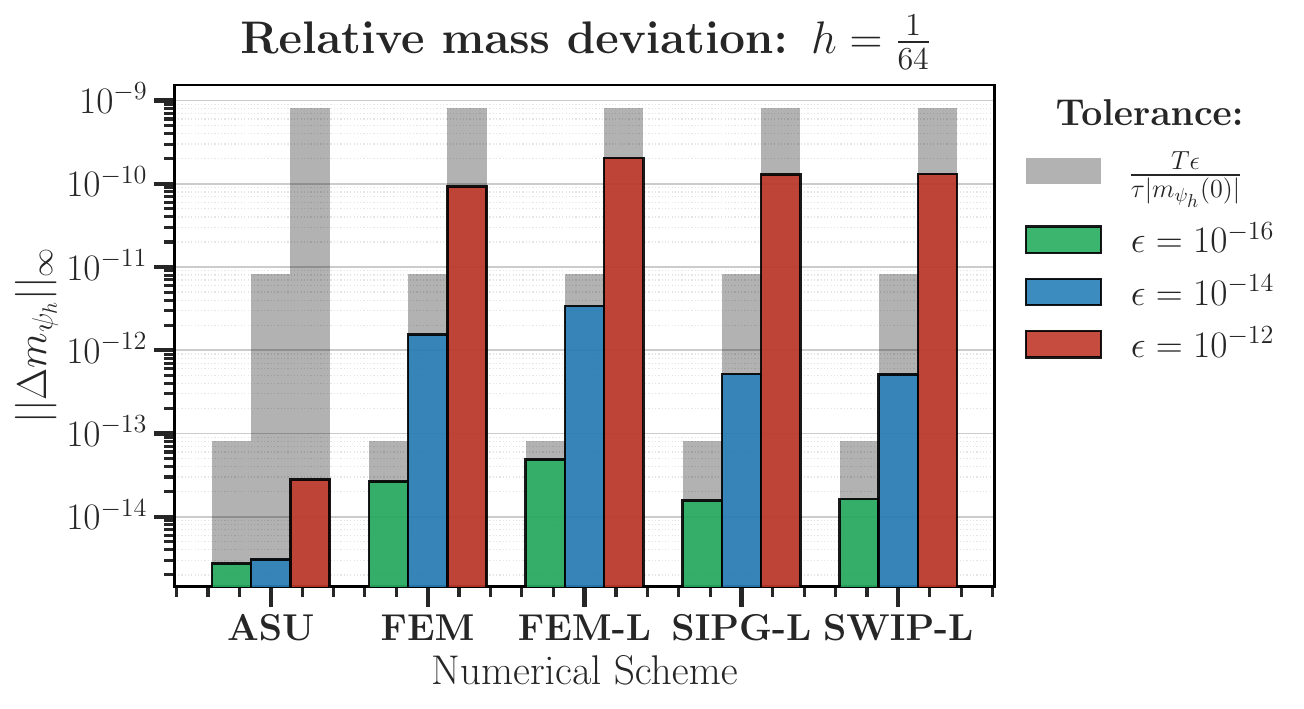}}
\hfill
\subfloat[$h_{\min} = \frac{1}{128}$]{\includegraphics[width = 0.495\textwidth]{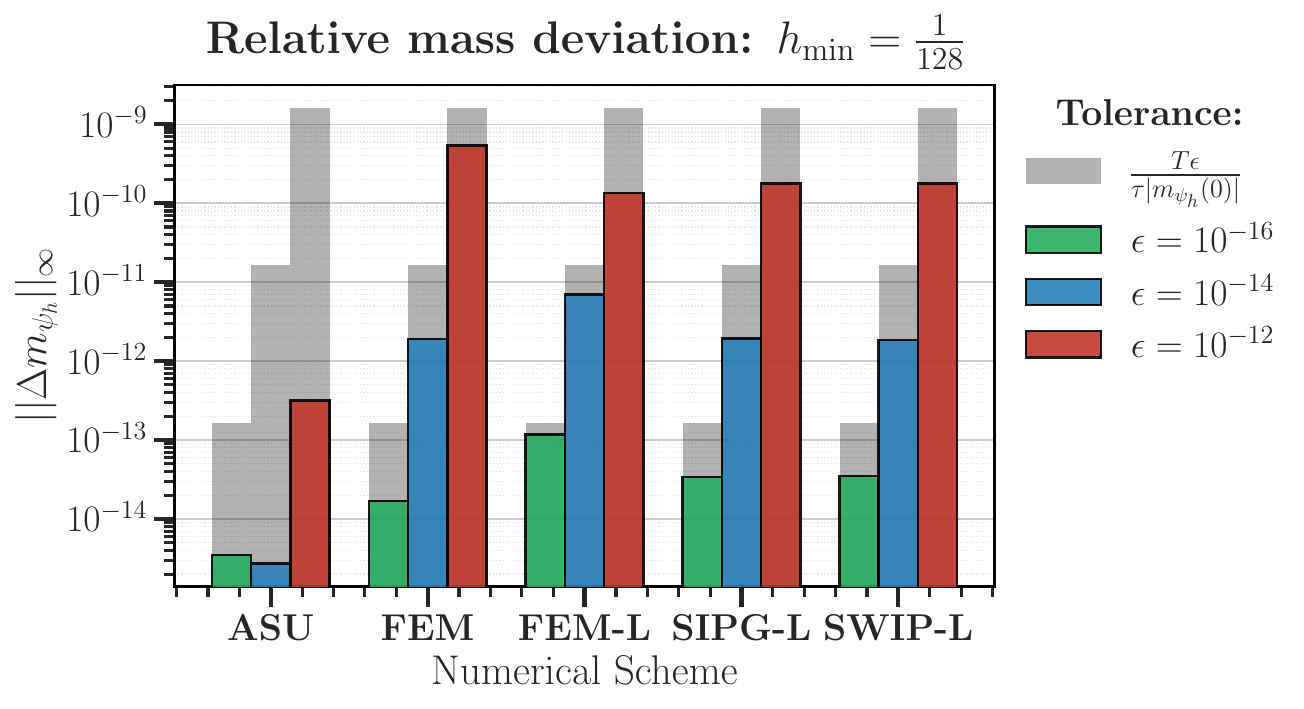}}
\caption{Relative mass deviation for \asu, \fem, \femL, \sipgL, and \swipL for different
tolerances. The background shows $\frac{T\epsilon}{\tau |m_{\pf_h}(0)|}$ as a bound.}
\label{fig:relative_tolerance}
\end{figure}
Fig. \ref{fig:1mass} demonstrates negligible mass deviation for all tested schemes,
except for \femC, with the initial conditions and parameters present in Ex.~\ref{ex:noconv}.
The largest absolute relative deviation in time is presented in Tab.~\ref{tab:phase-field-metrics}.
Moreover, in Fig.~\ref{fig:relative_tolerance} we present relative mass deviations
at time $t = T$ for non-linear tolerance $\epsilon\in\{10^{-12}, 10^{-14}, 10^{-16}\}$
for selected schemes, illustrating at least $\mathcal{O}\left(\frac{T\epsilon}{\tau
|m_{\pf_h}(0)|}\right)$ relative mass deviation, except for the \asu scheme, which
showed the smallest relative mass deviation overall without tightening the tolerance for $\epsilon \in \{10^{-14}, 10^{-16}\}$. The results indicate that tightening
the non-linear solver tolerance generally improves mass conservation for the selected
schemes.  This suggests that the choice of solver tolerance can have a notable impact
on the realization of mass conservation, and also in regards to scheme selection.
\par
In Tab.~\ref{tab:phase-field-metrics} we also report the computation time for using
the non-linear tolerance $\epsilon = 10^{-16}$ and both for uniform grids and when
using adaptivity, illustrating a general speed-up with mesh refinement as can be
seen by comparing the results for $h_{\min} = \frac{1}{64}$ and when using a uniform
grid with $h = \frac{1}{64}$. However, this comes at the cost of larger mass deviation
as can be seen by comparing the values in Tab.~\ref{tab:phase-field-metrics}, but
ultimately, the deviation is still bounded as we previously asserted from Fig.~\ref{fig:relative_tolerance}.\par

For \femC, to achieve boundedness, we observe higher mass deviation
$||\Delta m_{\pf_h}||_\infty = \mathcal{O}(10^{-4})$ (see Tab.~\ref{tab:phase-field-metrics})
compared to \fem. This drawback
is not observed for \femL, which keeps the bounds but at a significantly smaller
relative mass violation $||\Delta m_{\pf_h}||_\infty$ of similar size to the \fem
scheme. Keeping in mind that the mass deviation of the phase-field
leads to violation of mass conservation of the physical mass $m_\rho$
when used to represent the density $\rho$ in fluid dynamical applications, it seems reasonable
to prefer \femL instead of \femC to obtain better conservation properties.Energy dissipation is observed for all schemes presented, as shown in
Figs.~\ref{fig:1eneg} and \ref{fig:1eneg2}. \par

Next we consider boundedness of the phase-field variable $\pf_h$.
Fig.~\ref{fig:1min} and \ref{fig:1max} show the minima and maxima which are directly
preserved
for \femL, the DG schemes with limiters (\sipgL and \swipL), the \asu scheme and
artificially
by \femC. The highest and lowest values of $\pf_h$ are shown in Tab.~\ref{tab:phase-field-metrics}
for the different mesh configurations. It is noteworthy that the \asu scheme preserves
the bounds quite tightly
around $[-1,1]$, unlike the limited \femL, \sipgL, and \swipL schemes, even though
there
is no explicit limiter present in the scheme. Meanwhile, \fem, \sipg and \swip fail
to preserve a maximum principle without the aid of the limiter. It's worth noting that the
violation of minima and maxima is much more significant for the standard
\fem compared to \sipg and \swip. \par
Lastly, we timed the studied schemes and presented these results in Tab.~\ref{tab:phase-field-metrics}.
The timing indicates that the \fem-based schemes are generally the fastest,
but in particular for the \fem case at the cost of violation of the maximum
principle.
For \femL we note that the cost of the projection step is not sufficiently significant
compared to standard \fem at the coarsest mesh, while grid adaptivity adds extra
time.
Interestingly for the finest grid case, the \asu scheme is the slowest among
the conservative schemes, which is expected due to the additional variable and
mixed formulation. The DG schemes are generally slower than the \fem-based schemes,
but with the advantage of better physical properties and maximum principle preservation
when using limiters. As such, we stress that this computation time is highly dependent
on the preconditioner, and implementations, and thus should
only be used as a rough estimate.

For the remainder of the paper and guided by our results in Tab.~\ref{tab:phase-field-metrics},
we will only consider the limited schemes (\femL, \sipgL and \swipL)
along with \asu, due to their physical consistency under
the initial conditions and parameters present in Ex.~\ref{ex:noconv}.

\subsection{With Navier-Stokes Equation}
\label{sec:numerics-stationary}
\subsubsection{Rotating bubbles}
\label{sec:numerics-rotating}
\begin{example}[Rotating Merging Bubbles] \label{ex:stationary}
We consider the CHNS Eqs. \eqref{eq:ch1nd}-\eqref{eq:ns2nd} in the domain $\Omega
= [-0.5,0.5]^2$, with an initial velocity field
\begin{equation}
\mathbf{u}(\mbx, 0) = \chi \left( x_2 \left(0.16 - ||\mbx||^2\right)_{\oplus},
-x_1 \left(0.16 - ||\mbx||^2\right)_{\oplus} \right),
\end{equation}
where $\mbx = (x_1, x_2)^T$, and $\chi = 100$ is a scaling factor. The initial phase-field
profile
is
\begin{equation}
\pf(\mbx, 0) = 0.99\left(2\min\left\{\left(1 + 2^{-1}\sum_{j=1}^{2} \tanh\left(\frac{r_j- || \mathbf{x} - \mathbf{c}_j||}{\sqrt{2}Cn}\right)\right),1\right\} - 1\right),
\end{equation}
where $r_1 = 0.25$ and $r_2 = 0.15$ are the radii of the respective droplets,
with central points $\mathbf{c}_1 = (0.1, 0.1)^T$ and $\mathbf{c}_2 = (-0.15,
-0.15)^T$ respectively. The following non-dimensional numbers are considered: Reynolds
number $Re = 1$, Cahn number $Cn = h$, Weber number $We = Cn^{-1}$, and the Peclet
number is $Pe^{-1}
= 3 Cn$.
The simulation is run for $ t \leq T = 0.2$.
\end{example}
For Example \ref{ex:stationary}, a set-up similar to that in \cite{Acosta:2025}
is adopted, where the authors did not perform non-dimensionalization of the governing
equations. Consequently,
the Reynolds number is $Re = 1$, viscosity is constant $\mu_1 = \mu_2$, densities
are $\rho_1 = 100$ and $\rho_2 = 1$ while the surface tension formulations from
Eqs.\eqref{eq:sigma1} is investigated with $We^{-1} = Cn = h$ so that $\mb{S} =
- \pf \nabla \chem$. Thus, the non-dimensional constants are chosen to match a similar
formulation of the experiment and equation as presented in \cite{Acosta:2025}. For the non-linear solver, we picked a tolerance of $\epsilon = 10^{-16}$. We
set $h_{\max} = \frac{1}{32}$
as the coarsest grid and $h_{\min} = \frac{1}{128}$ as the finest grid, with grid
adaptivity starting from $h_{\max} = \frac{1}{32}$ and refining up to $h_{\min}
\in \{\frac{1}{64}, \frac{1}{128}\}$. The problem is also simulated on a uniform
grid with $h = \frac{1}{64}$. We picked the time increment $\dt = 32 \cdot
10^{-3} h_{\min}$. When applicable, the grid is refined every $5$th time steps.

\begin{figure}[H]
\centering
\subfloat[$t = 0$\label{fig:sol020}]{\includegraphics[width=0.22\textwidth]{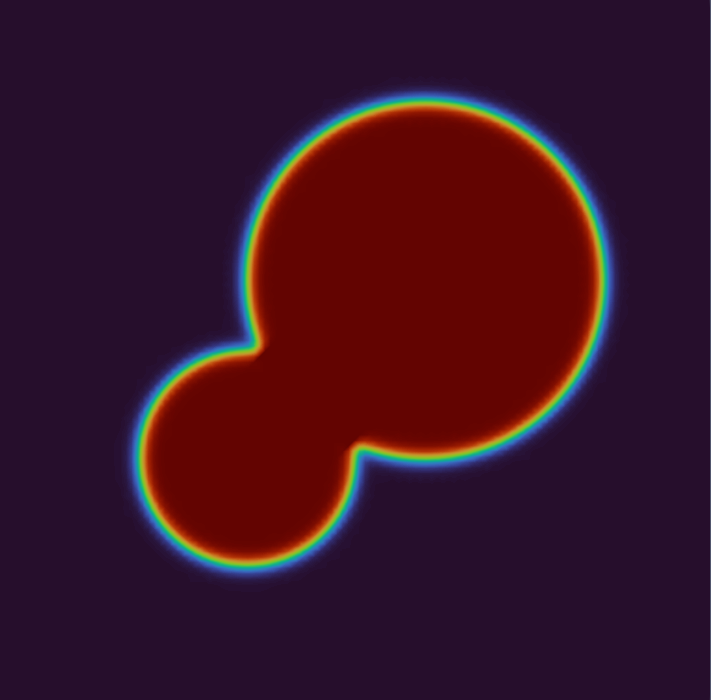}}
\subfloat[$t = \frac{T}{4}$\label{fig:sol021}]{\includegraphics[width=0.22\textwidth]{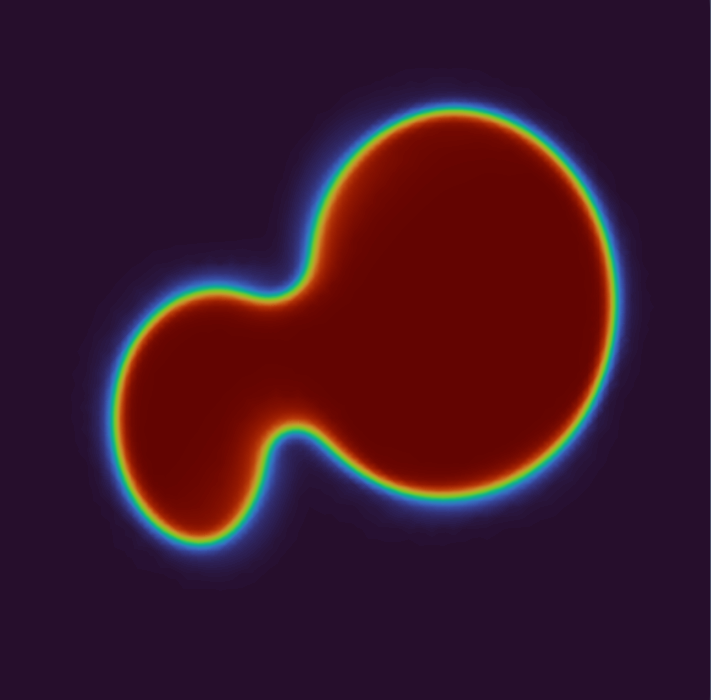}}
\subfloat[$t = \frac{T}{2}$\label{fig:sol022}]{\includegraphics[width=0.22\textwidth]{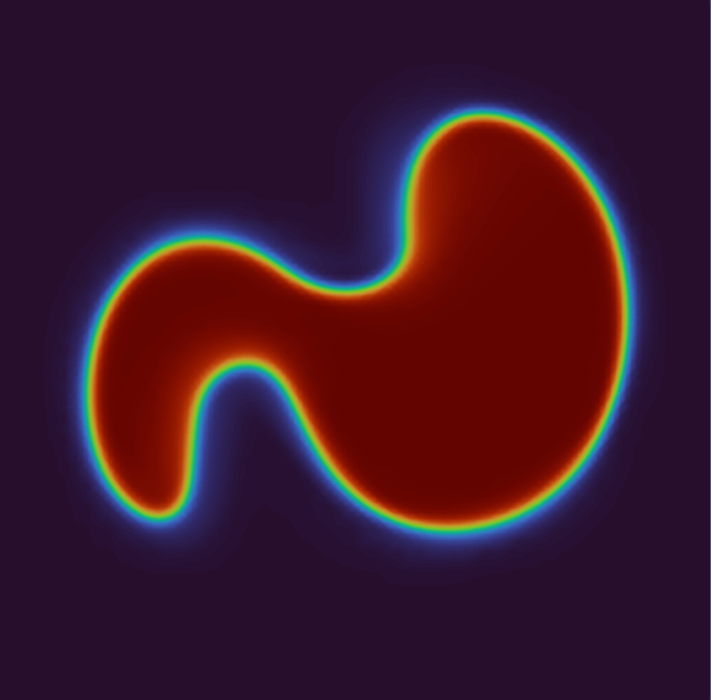}}
\subfloat[$t = T$\label{fig:sol023}]{\includegraphics[width=0.22\textwidth]{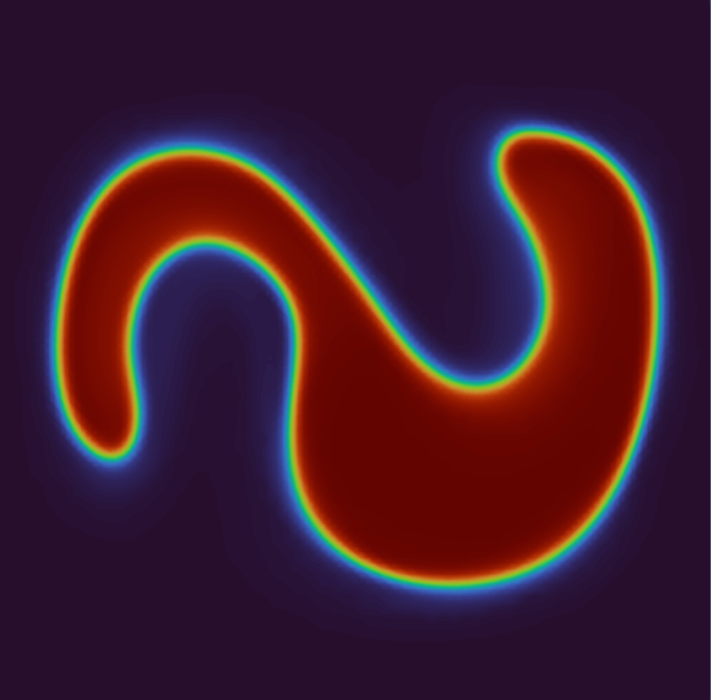}}
\caption{\asu: Evolution of the phase-field $\pf_h$ at different time steps.}
\label{fig:sol2asu}
\end{figure}

\begin{figure}[H]
\centering
\subfloat[$t = 0$\label{fig:sol20}]{\includegraphics[width=0.22\textwidth]{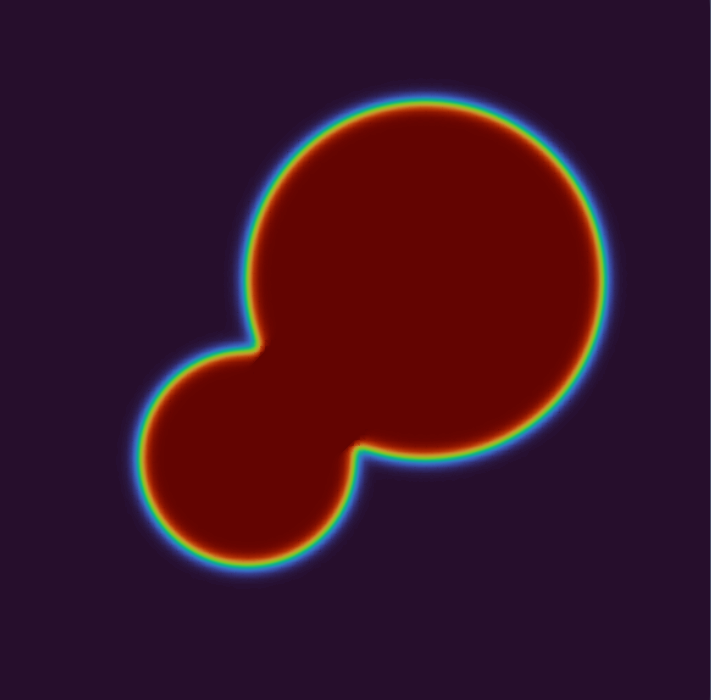}}
\subfloat[$t = \frac{T}{4}$\label{fig:sol21}]{\includegraphics[width=0.22\textwidth]{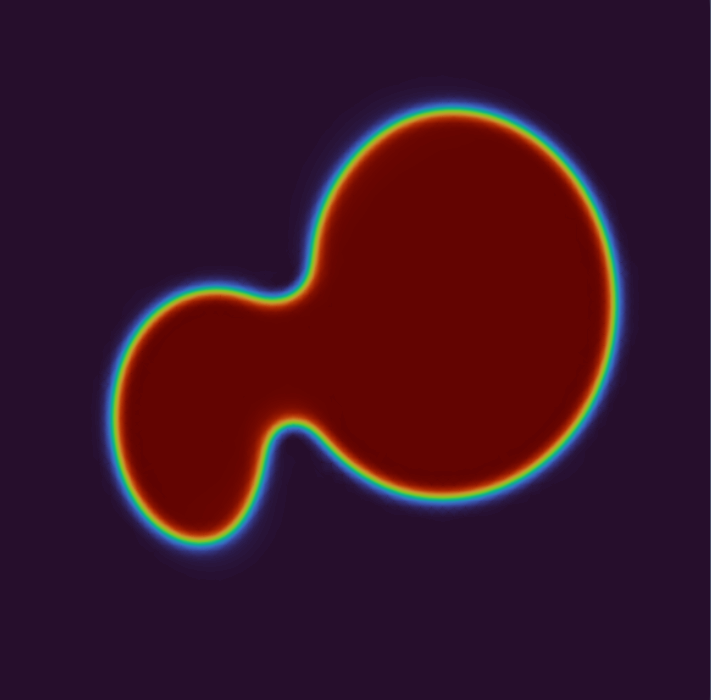}}
\subfloat[$t = \frac{T}{2}$\label{fig:sol22}]{\includegraphics[width=0.22\textwidth]{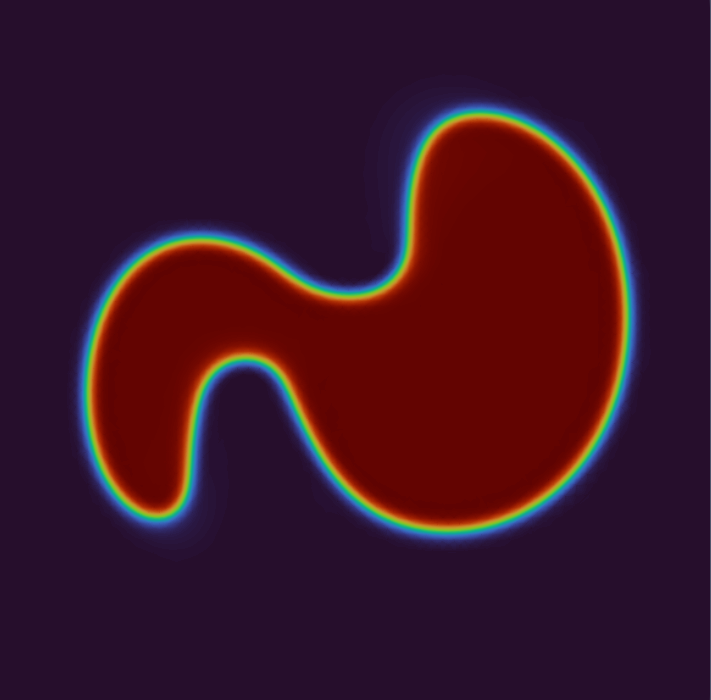}}
\subfloat[$t = T$\label{fig:sol23}]{\includegraphics[width=0.22\textwidth]{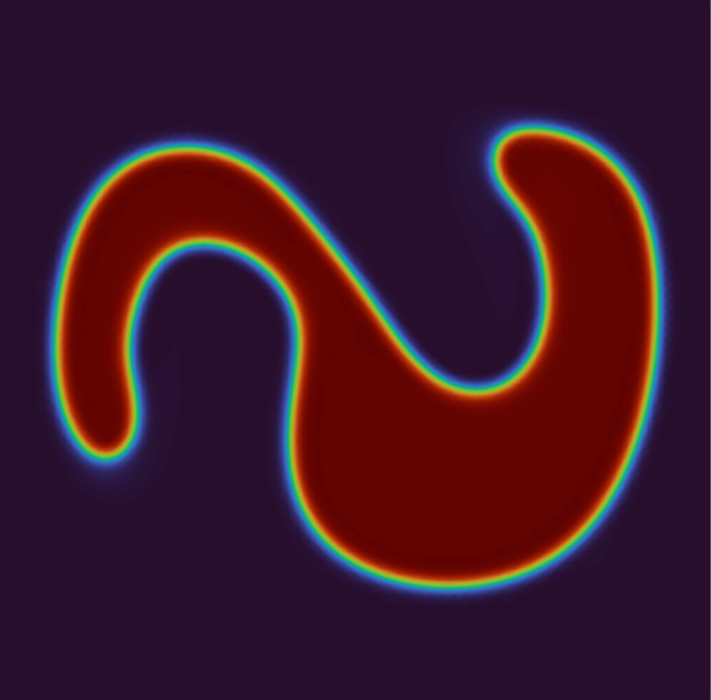}}
  \caption{\swipL: Evolution of the phase-field $\pf_h$ at different time steps.
  Compared to Fig.~\ref{fig:sol2asu} a sharper resolution of the fluid interface is
  observed.}\label{fig:sol2}
\end{figure}

\begin{figure}[H]
\centering
\subfloat[Mass difference\label{fig:av2mass}]{\includegraphics[width=0.49\textwidth]{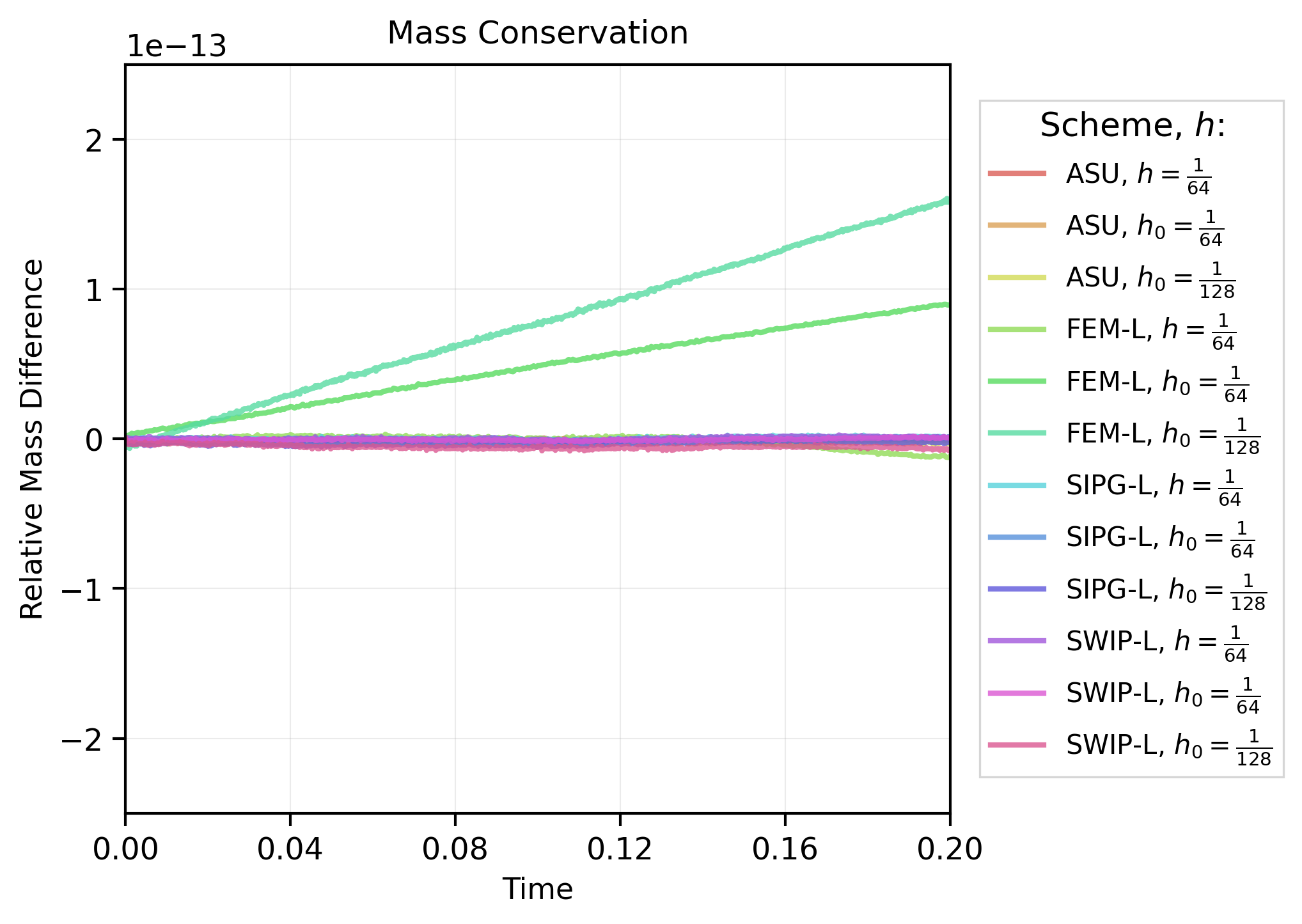}}
\subfloat[Fractional energy\label{fig:ac2eneg}]{\includegraphics[width=0.49\textwidth]{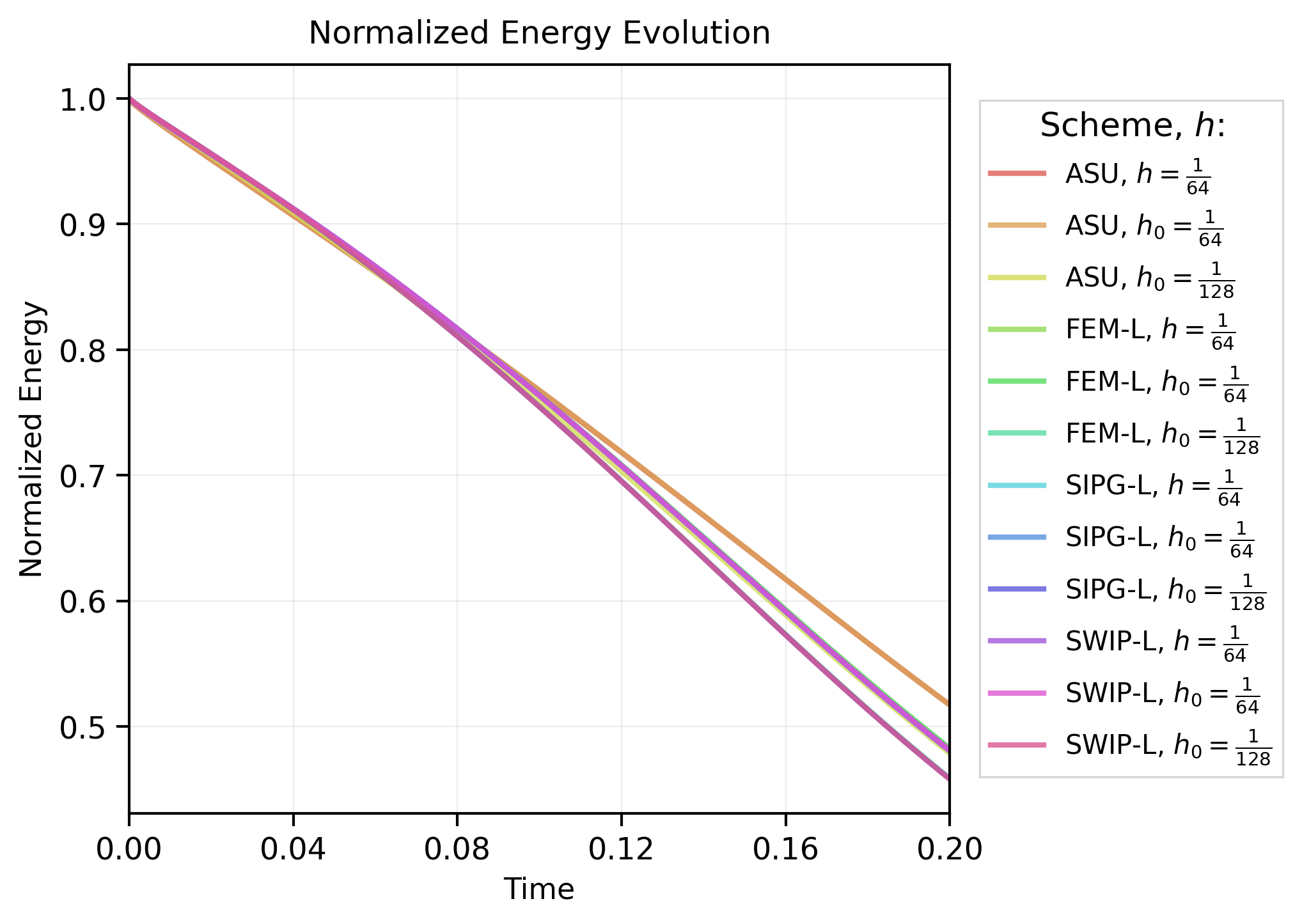}}
\hfill
\subfloat[Energy difference\label{fig:ac2eneg2}]{\includegraphics[width=0.5\textwidth]{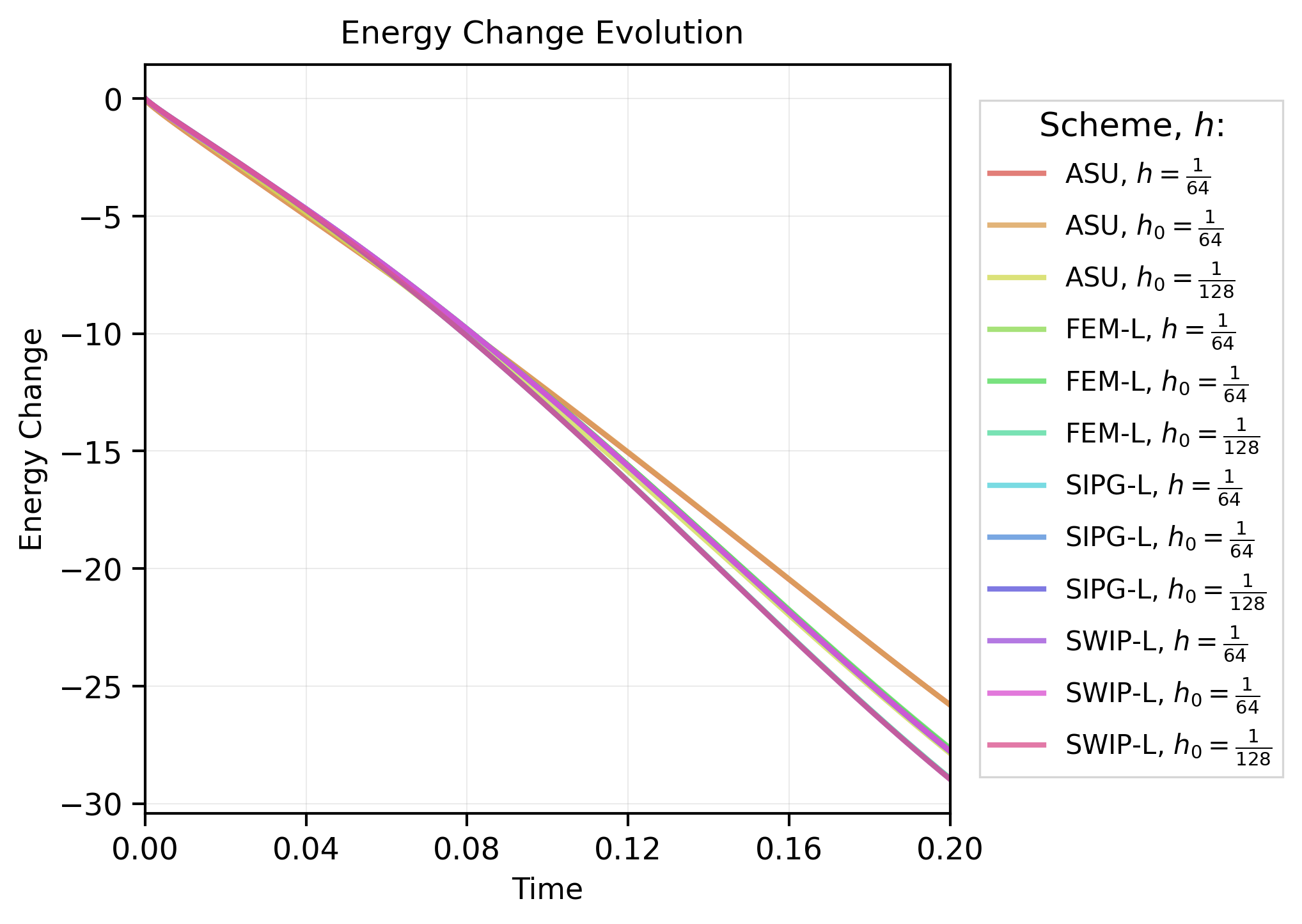}}
\caption{Physical properties: mass conservation and energy dissipation}\label{fig:mass22}
\end{figure}

\begin{figure}[H]
\centering
\subfloat[Minima\label{fig:2min}]{\includegraphics[width=0.49\textwidth]{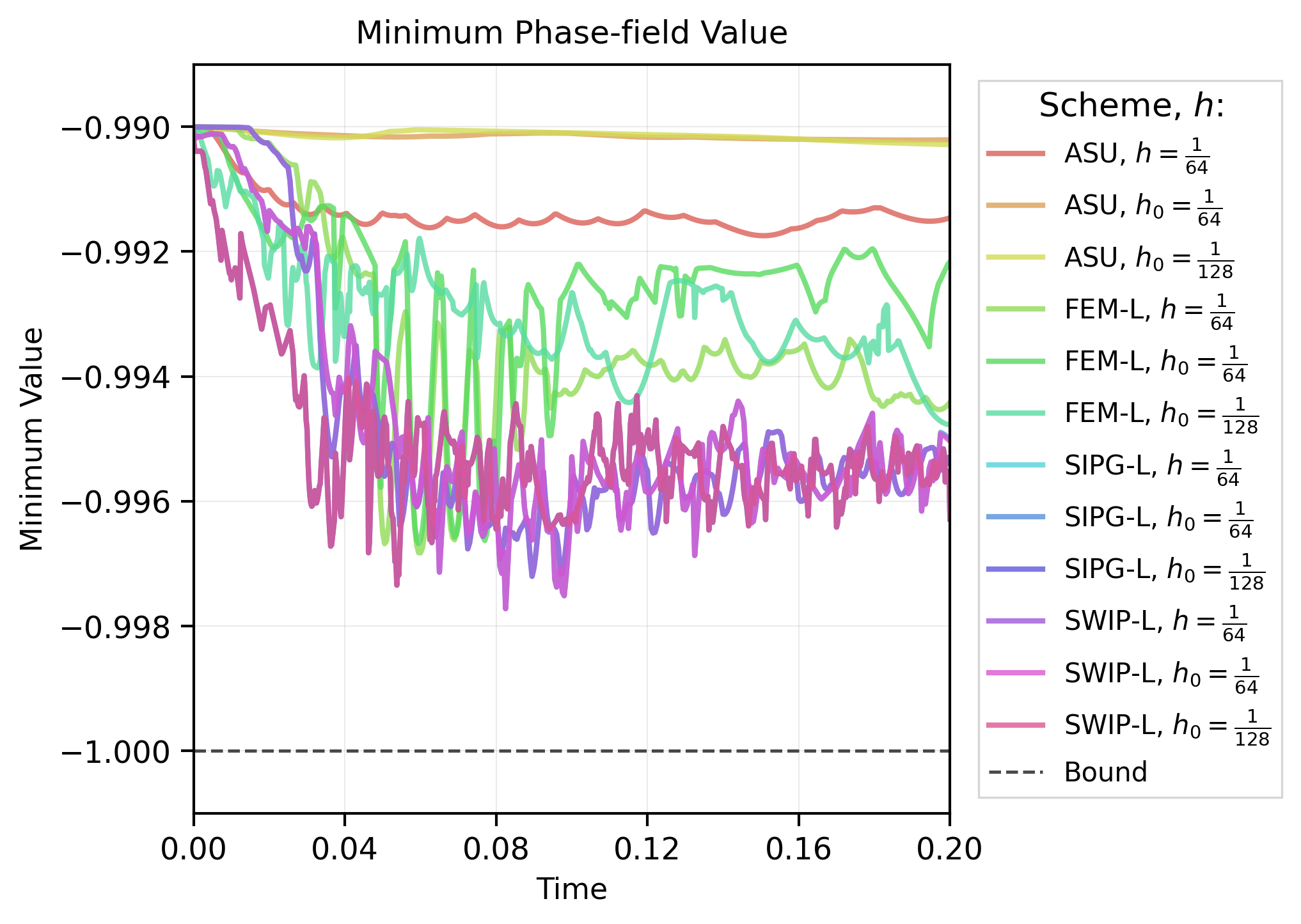}}
\subfloat[Maxima\label{fig:2max}]{\includegraphics[width=0.49\textwidth]{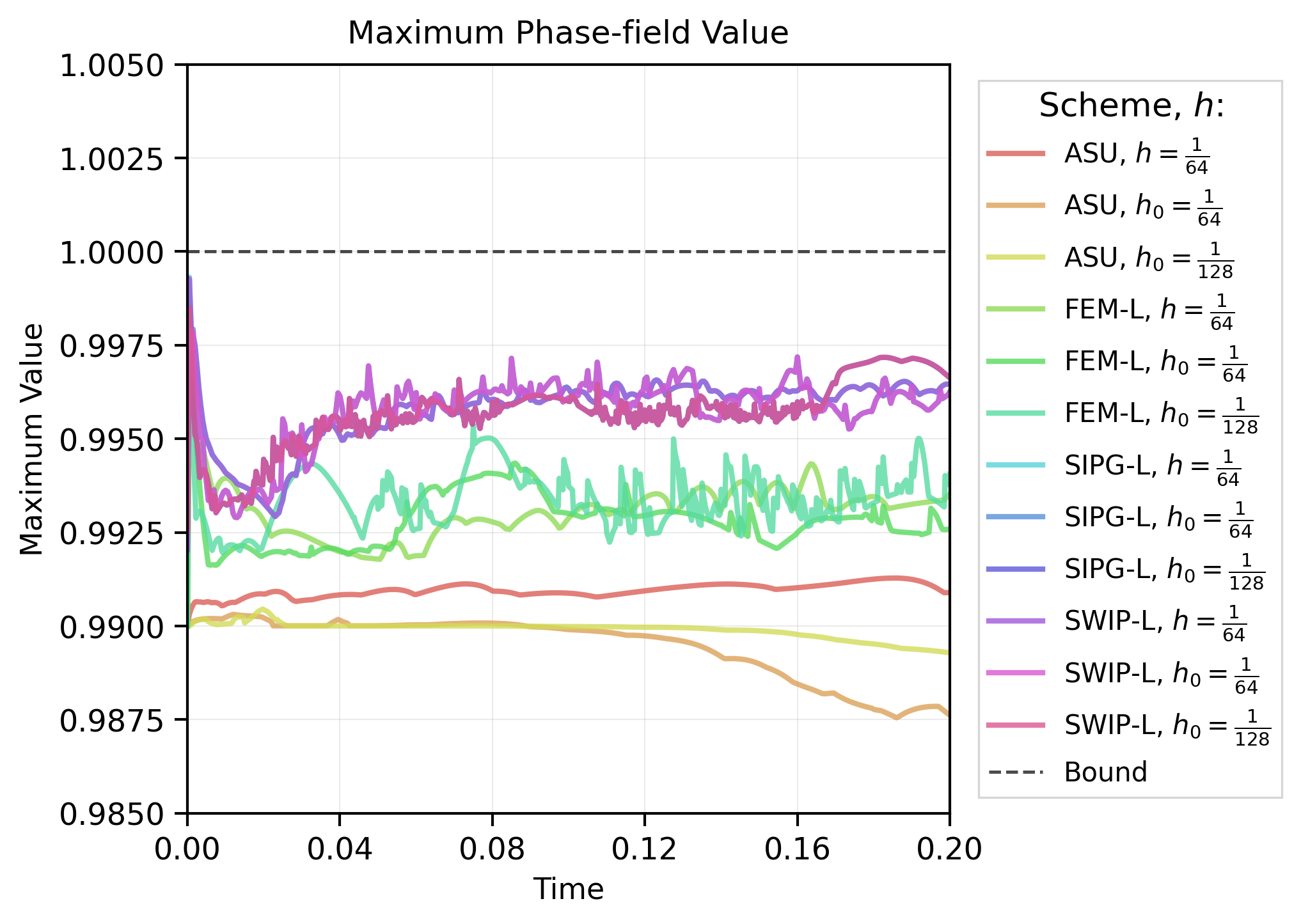}}
\caption{Minimal and maximal values of $\pf_h$ over time}\label{sol2:minmax2}
\end{figure}
\begin{table}[ht]
\centering
\caption{Comparison of mass conservation between \femL and \fem schemes for Ex.~\ref{ex:stationary}
with $\epsilon = 10^{-16}$.}
\label{tab:feml-fem-comparison}
  {
  \small
\begin{tabular}{lccc}
\toprule
  $h$ & \femL $||\Delta m_{\pf_h}||_\infty$ & \fem $||\Delta m_{\pf_h}||_\infty$ & $\frac{T \epsilon}{\dt|m_\pf(0)|}$ \\
\midrule
$h_{\min} = \frac{1}{64}$ & $8.9803 \times 10^{-14}$ & $1.7699 \times 10^{-14}$ & $8.5809 \times 10^{-14}$ \\
  $h_{\phantom{\min}} = \frac{1}{64}$ & $1.2472 \times 10^{-14}$ & $1.7817 \times 10^{-14}$ & $8.5802 \times 10^{-14}$ \\
$h_{\min} = \frac{1}{128}$ & $1.6046 \times 10^{-13}$ & $1.6141 \times 10^{-14}$ & $1.7000 \times 10^{-13}$ \\
\bottomrule
\end{tabular}
}
\end{table}
Figs.~\ref{fig:mass22} and~\ref{sol2:minmax2} correspond to phase-field properties
and solution snapshots are illustrated in Fig.~\ref{fig:sol2asu} and Fig.~\ref{fig:sol2}
for the time evolution of Ex.~\ref{ex:stationary}. The snapshots for \sipgL and \femL
are absent since they are similar to the ones shown for \swipL in Fig.~\ref{fig:sol2}.
In particular we notice minor differences in the boundary formed during the rotation
of the two bubbles. It is unclear if these differences are due to the different scheme
structures used in the schemes or other numerical artifacts due to the different
orders. Regardless, the overall shape is similar.\par
It is of interest to observe that all schemes preserve mass well except for \femL.
To cross-validate, we also ran the \fem scheme which did not show such large deviations.
This is particularly evident as scheme specific in Fig.~\ref{fig:av2mass} and also
in the mass deviation presented in Tab.~\ref{tab:phase-field-metrics}, from our previous
numerical experiment, for the different mesh configurations. Regardless, reasonable
tolerance-related bounds are still respected as is seen in Tab.~\ref{tab:feml-fem-comparison}.
\par

All presented schemes preserve boundedness as shown in Fig.~\ref{sol2:minmax2}. The
\femL scheme preserves the bounds due to the limiter, but has an issue with increased
mass deviation as previously noted. A cause for this could be the use of the Taylor-Hood
elements and \fem which are not necessarily divergence-free as highlighted in Rem.~\ref{rem:RT}.
Moreover, since the governing equation is solved using \fem we therefore do not have
a flux-treatment for the velocity field $\mathbf{u}$ over the boundaries, which is
present for the other studied schemes \sipgL, \swipL, and \asu.  \par
 Finally, energy dissipation is obtained for all schemes as shown in Figs.~\ref{fig:ac2eneg}
and \ref{fig:ac2eneg2}. Furthermore, we note that even though the energy curves in
Fig. \ref{fig:ac2eneg2} are different,
we still obtain similar simulation results as previously discussed when we compared
the graphical simulations in Fig.~\ref{fig:sol2asu} and~\ref{fig:sol2}.

\subsubsection{Non-dimensional example: Rising bubble benchmark}
\label{sec:numerics-rising}
\begin{table}[h!]
\centering
\caption{Physical parameters and dimensionless numbers for Case 1 and Case 2
of the rising bubble problems}
\begin{tabular}{lccccccccccc}
\hline
\textbf{Test case} & $\rho_1$ & $\rho_2$ & $\mu_1$ & $\mu_2$ & $g$ & $\sigma$
& $Re$ & Fr & $\rho_1/\rho_2$ & $\mu_1/\mu_2$ \\
\hline
Case 1 & 1000 & 100 & 10 & 1 & 0.98 & 24.5 & 35 & 1  & 10 & 10 \\
Case 2 & 1000 & 1 & 10 & 0.1 & 0.98 & 1.96 & 35 & 1 & 1000 & 100 \\
\hline
\end{tabular}
\label{tab:tc_params}
\end{table}

\begin{example}\label{ex:time-dependent}
We consider the CHNS Eqs. \eqref{eq:ch1nd}-\eqref{eq:ns2nd} in the domain $\Omega
= [0, 2L] \times [0, 4L]$, where $L = 0.5$ is the droplet diameter. The initial
conditions are given by $\mathbf{u}(\mbx,0) = \mb{0}$ and
\begin{equation}
\pf(\mbx,0) = -0.99 \tanh\left( \frac{L - 2 || \mathbf{x} - \mathbf{c} ||}{2\sqrt{2}Cn}
\right),
\end{equation}
where $\mathbf{c} = (L, L)^T$ is the droplet's center, alongside Cahn number
$Cn = h$ and Peclet number $Pe^{-1} = 3Cn$. Additionally, we include a gravitational
force with magnitude $g
= 0.98$, direction $\hat{\mathbf{g}} = (0,-1)^T$, and set the characteristic
velocity $U = \sqrt{gL}$. The simulation is run for $ t \leq T = 3$.
\end{example}

Table \ref{tab:tc_params} summarizes the physical parameters and dimensionless
numbers for two test cases, Case 1 and Case 2, which use the initial values
provided in Example \ref{ex:time-dependent}. These are standard test cases for
CHNS equations (see, for instance, \cite{Khanwale:2022}). Case 1 is a rising
bubble problem with a density ratio of 10, a viscosity ratio of 10, and a Reynolds
number of 35, leading to bubble deformation. Case 2 features a density ratio
of 1000, a viscosity ratio of 100, and a Reynolds number of 35. The Cahn number
is set to $Cn = h = \mathcal{O}(0.01)$ for $h_{\min} \in \{\frac{1}{64}, \frac{1}{128}\}$,
with $h_{\max} = \frac{1}{8}$. Due to the varying cell sizes in this problem, we
stress the importance of using the harmonic average for the grid-width $h_H$ in the
simulations as is defined in Eq.~\eqref{eq:harmonicGW}
which is an harmonic expression of the local grid width $h_\elem$.
Moreover, the Peclet number is set to $Pe^{-1} = 3Cn$ for both test cases, as
suggested in~\cite{Magaletti2013, Khanwale:2022}. Following~\cite{Abels:2012},
the surface tension is given by the Korteweg formulation
of Eq.~\eqref{eq:korteweg}.
We picked the non-linear tolerance as $\epsilon = 5 \cdot 10^{-16}$, time increment $\dt = 128 \cdot 10^{-3} h_{\min}$ and perform grid-adaptivity
every $5^{\text{th}}$ time-step.
\begin{figure}[H]
\centering
\subfloat[\femL, $h_{\min} = \frac{1}{64}$\label{fig:sol31}]{\includegraphics[width=0.495\textwidth]{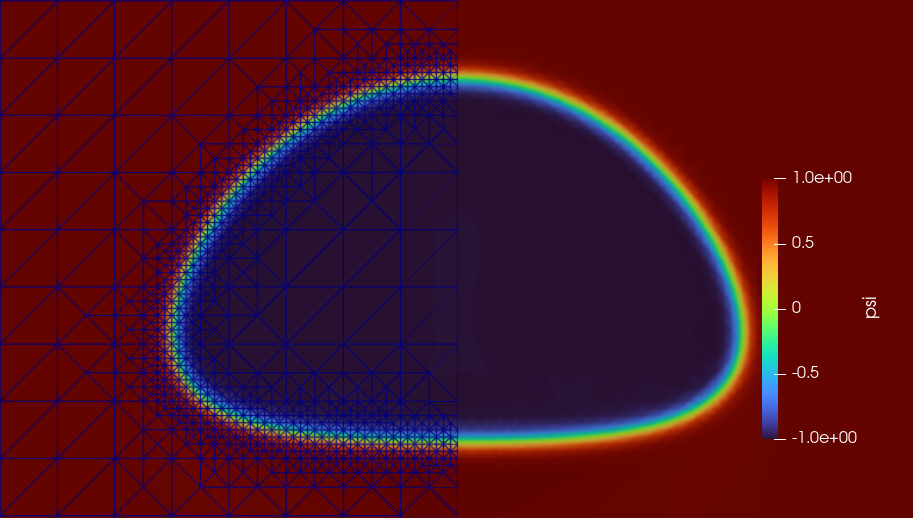}}
\subfloat[\femL $h_{\min} = \frac{1}{128}$\label{fig:sol30}]{\includegraphics[width=0.495\textwidth]{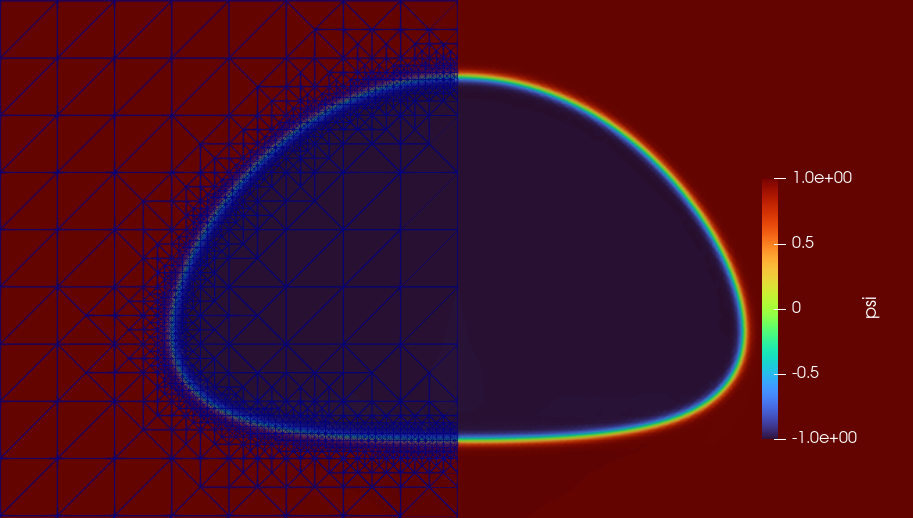}}

\vspace{1em}

\subfloat[\swipL, $h_{\min} = \frac{1}{64}$\label{fig:sol33}]{\includegraphics[width=0.495\textwidth]{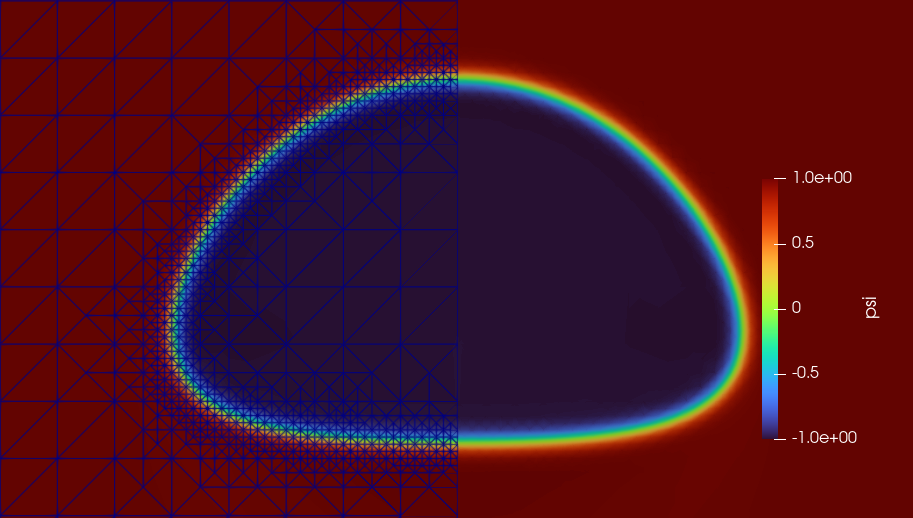}}
\subfloat[\swipL, $h_{\min} = \frac{1}{128}$\label{fig:sol32}]{\includegraphics[width=0.495\textwidth]{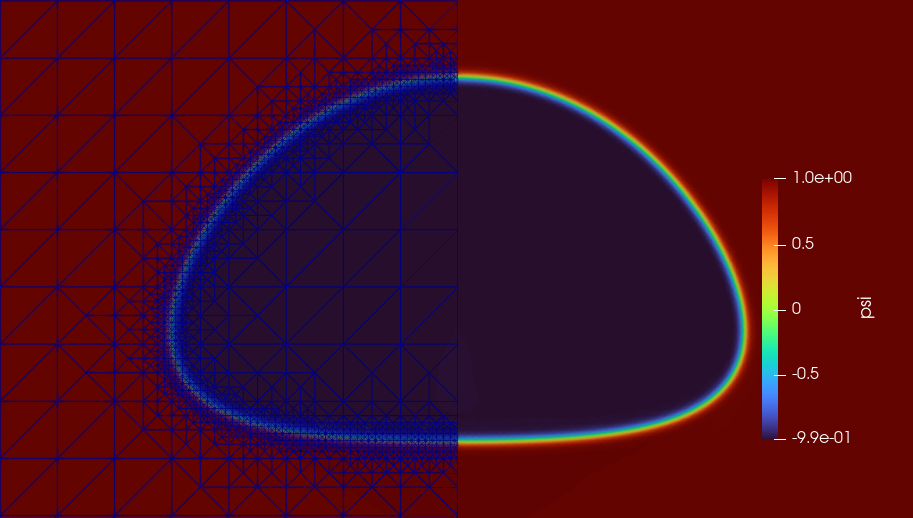}}
\caption{Final solution at $t = 3$ of the phase-field $\pf_h$ for Case 1 and the
underlying grid.} \label{fig:sol3}
\end{figure}

\begin{figure}[H]
\centering
\subfloat[Mass difference\label{fig:3mass}]{\includegraphics[width=0.50\textwidth]{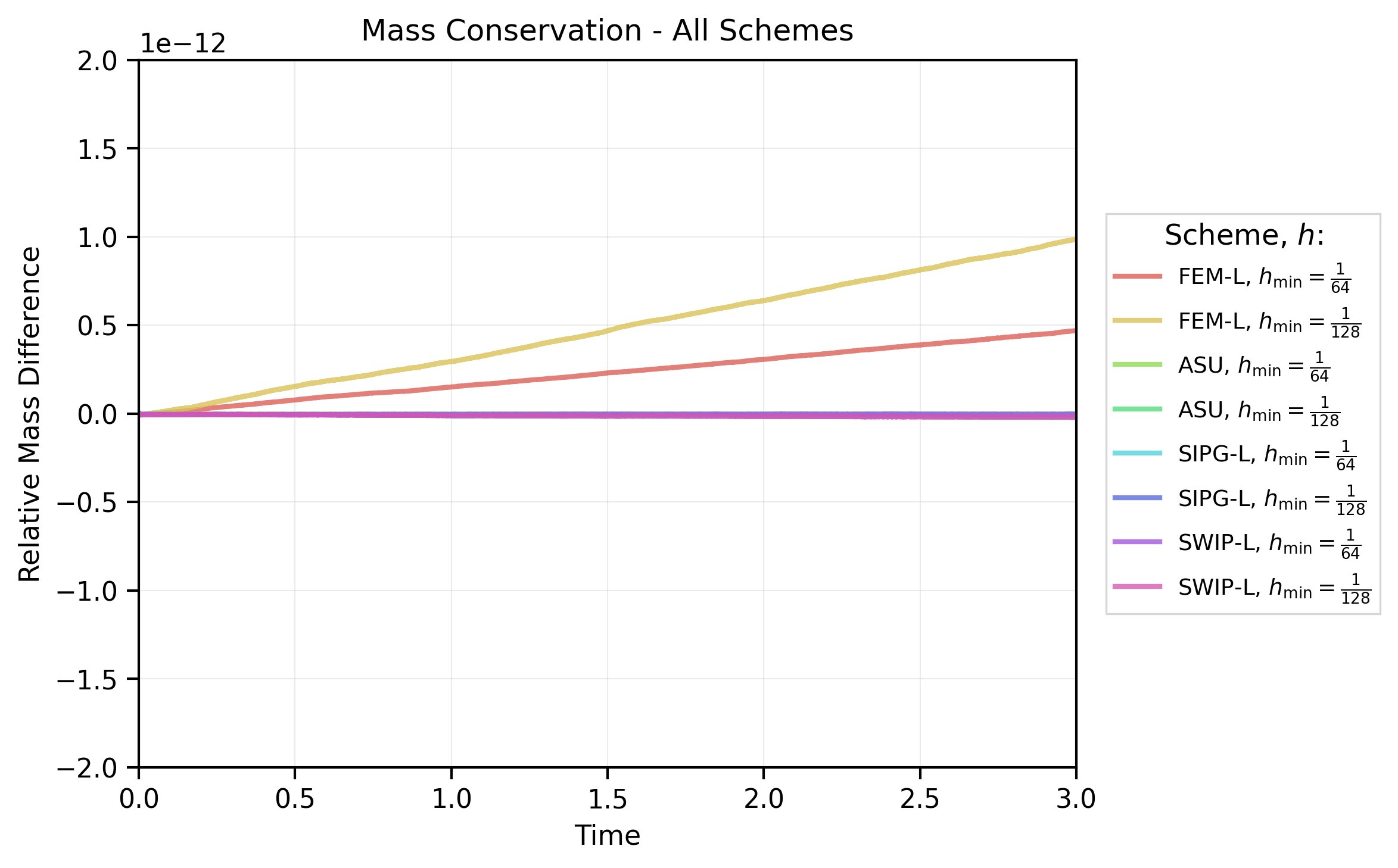}}
\subfloat[Fractional energy\label{fig:3eneg}]{\includegraphics[width=0.49\textwidth]{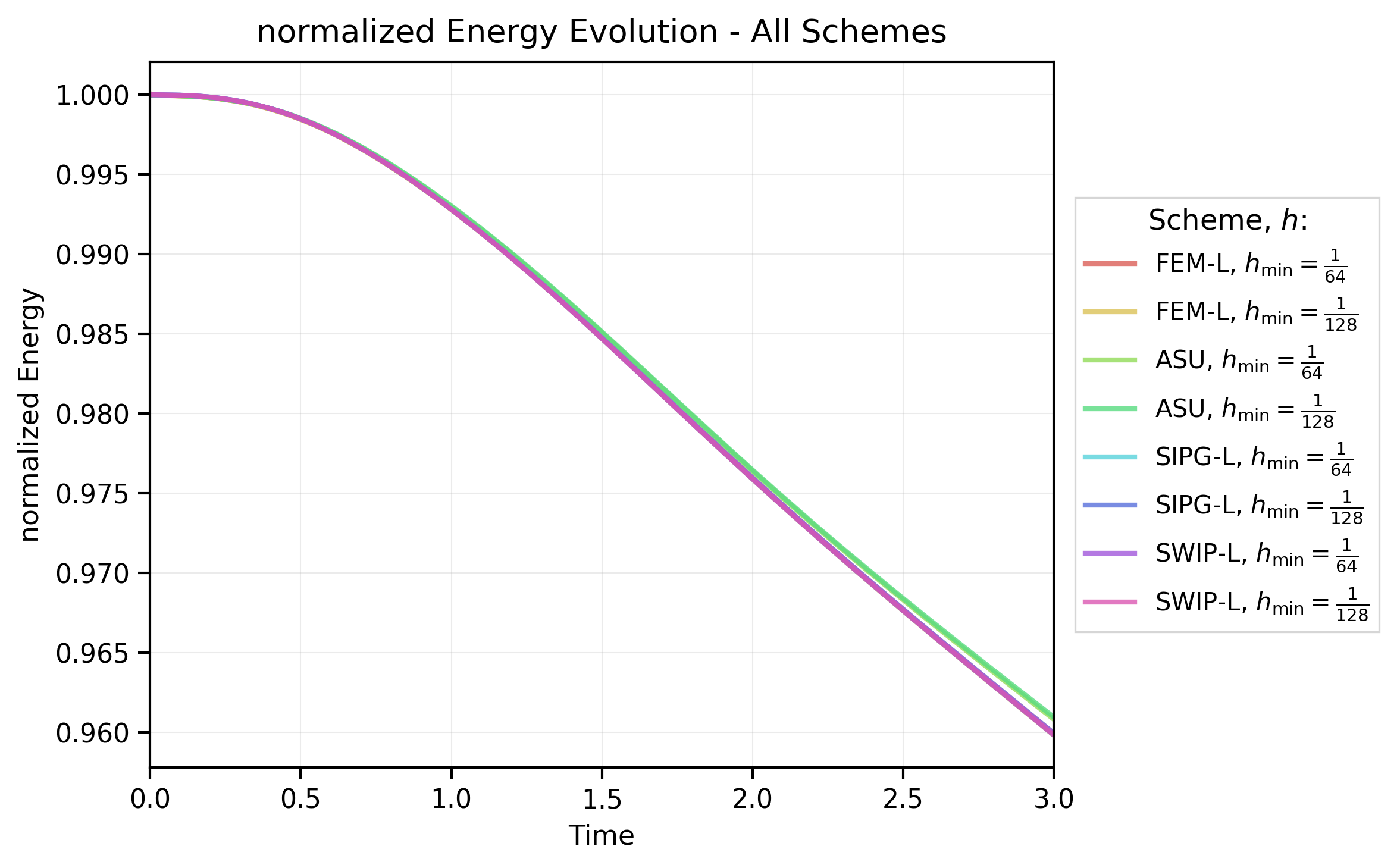}}
\hfill
\subfloat[Energy difference\label{fig:3eneg2}]{\includegraphics[width=0.49\textwidth]{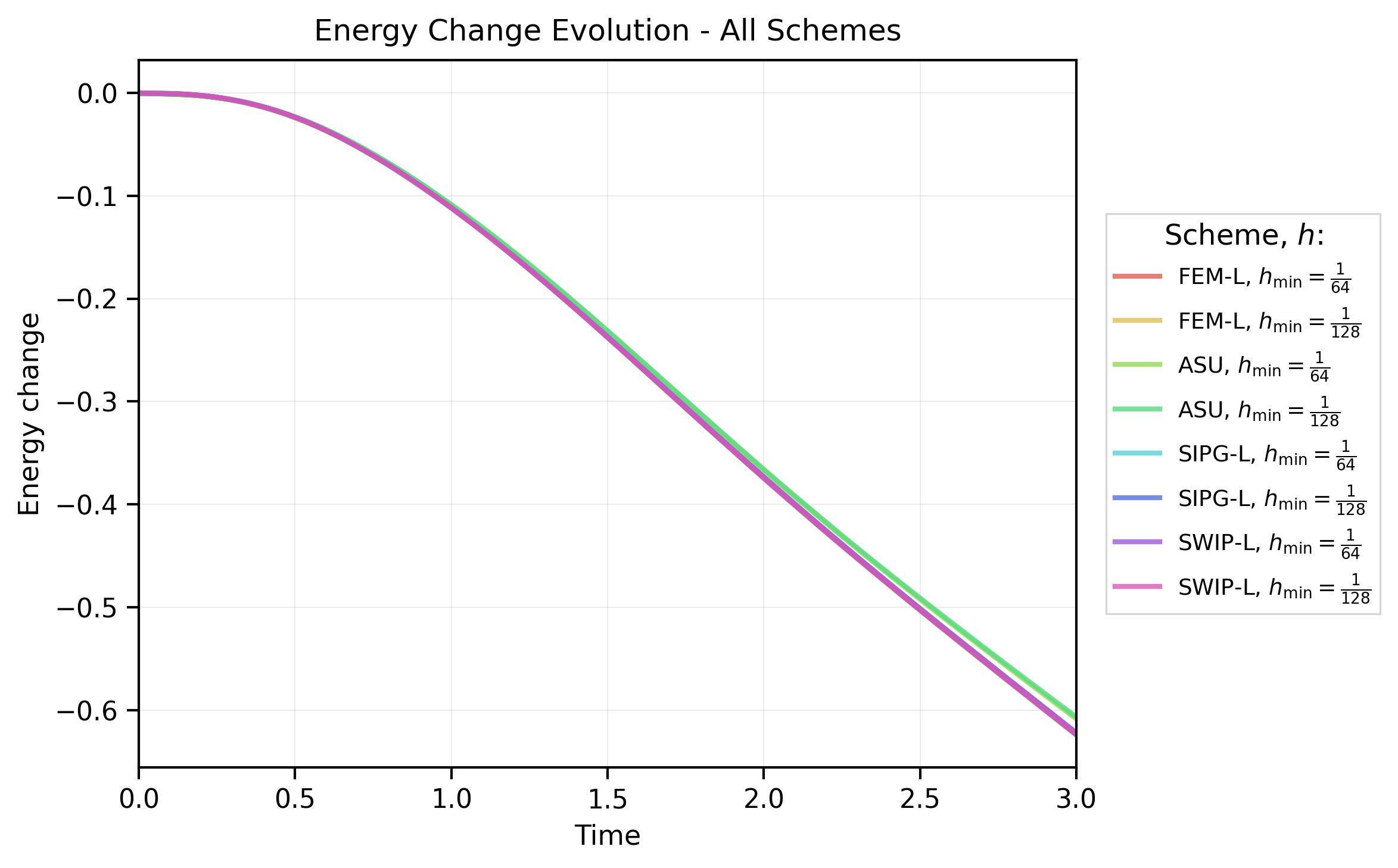}}
\caption{Physical properties for Case 1: mass conservation and energy evolution.}
\label{fig:3all}
\end{figure}

\begin{figure}[H]
\centering
\subfloat[Minima\label{fig:3min}]{\includegraphics[width=0.49999\textwidth]{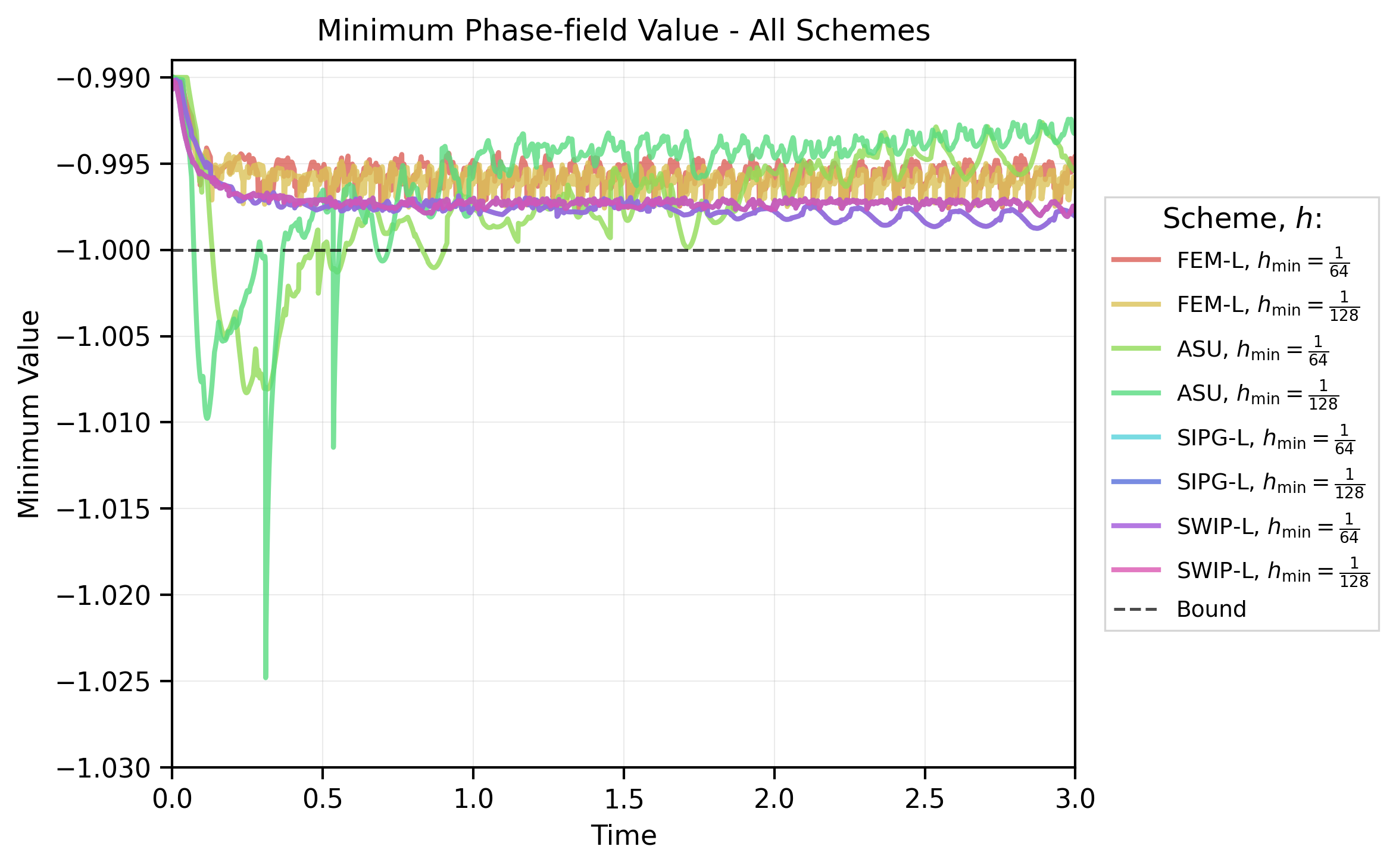}}
\subfloat[Maxima\label{fig:3max}]{\includegraphics[width=0.49999\textwidth]{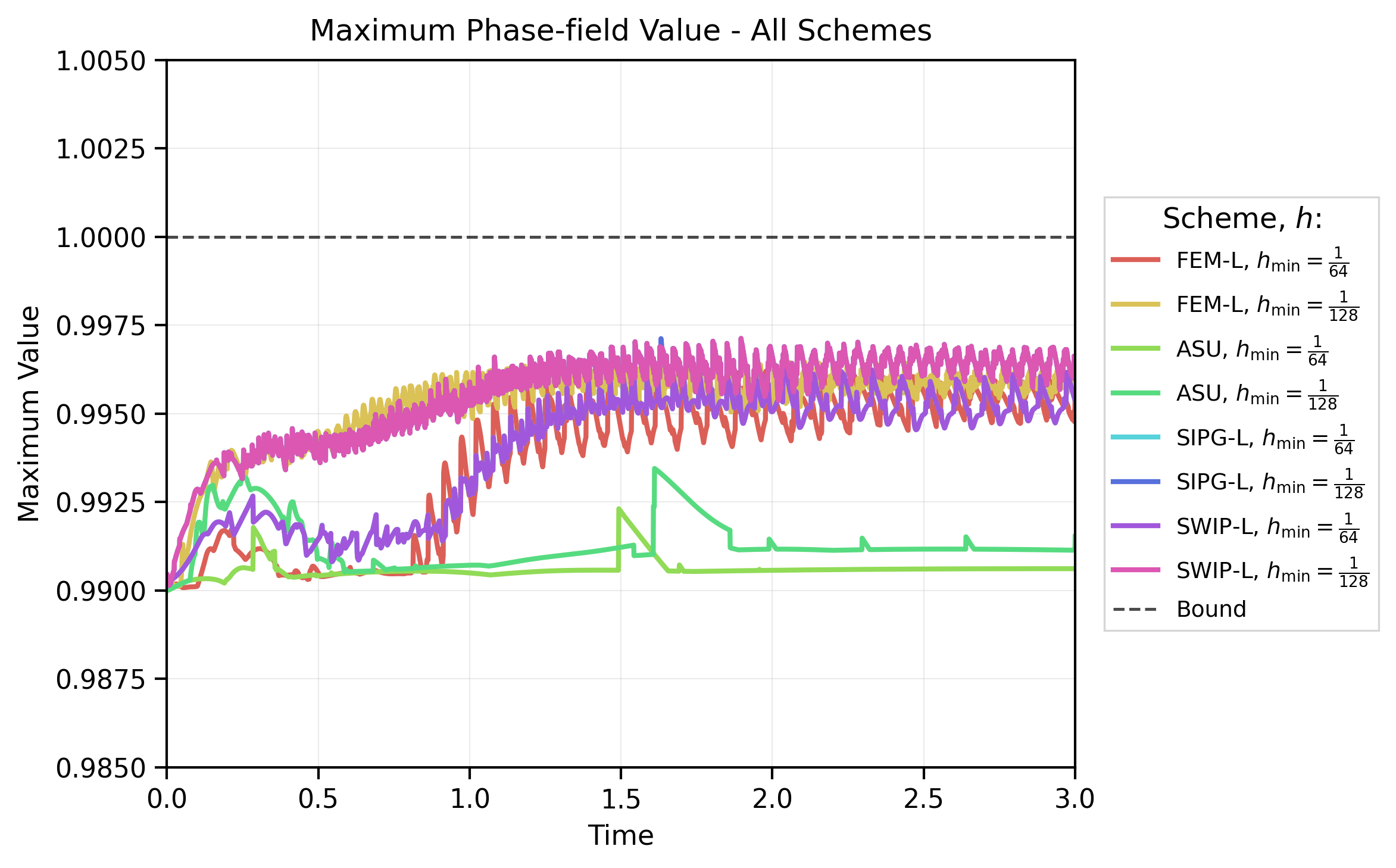}}
\caption{Minimal and maximal values of $\pf_h$ over time for Case 1.} \label{fig:bound3}
\end{figure}
\begin{figure}[H]
\centering
\subfloat[\femL, $h_{\min} = \frac{1}{64}$\label{fig:sol41}]{\includegraphics[width=0.495\textwidth]{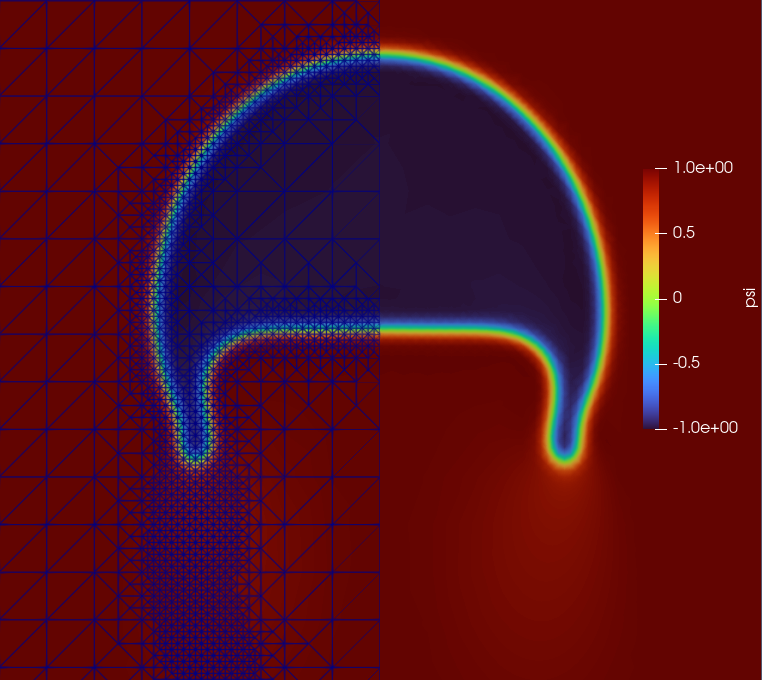}}
\subfloat[\femL, $h_{\min} = \frac{1}{128}$\label{fig:sol40}]{\includegraphics[width=0.495\textwidth]{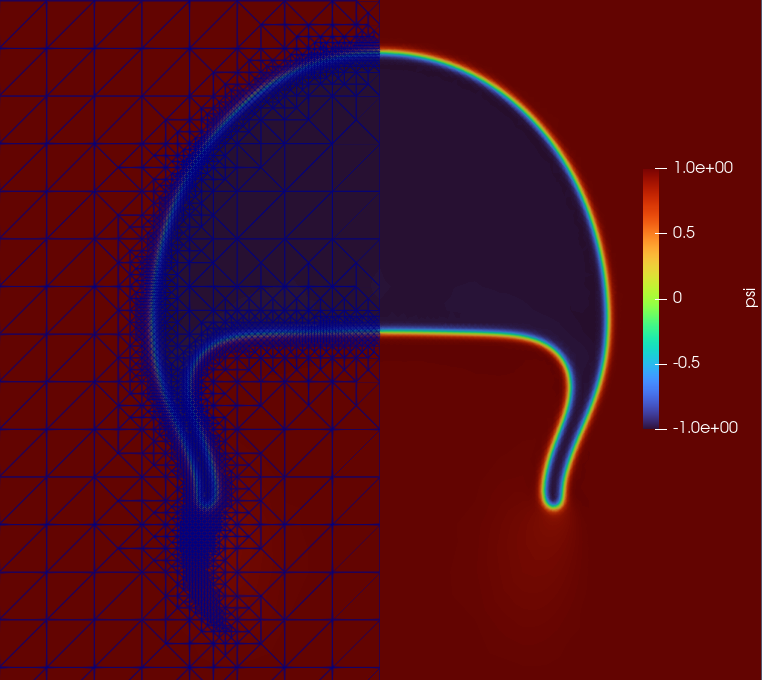}}
\vspace{1em}
\subfloat[\swipL, $h_{\min} = \frac{1}{64}$\label{fig:sol43}]{\includegraphics[width=0.495\textwidth]{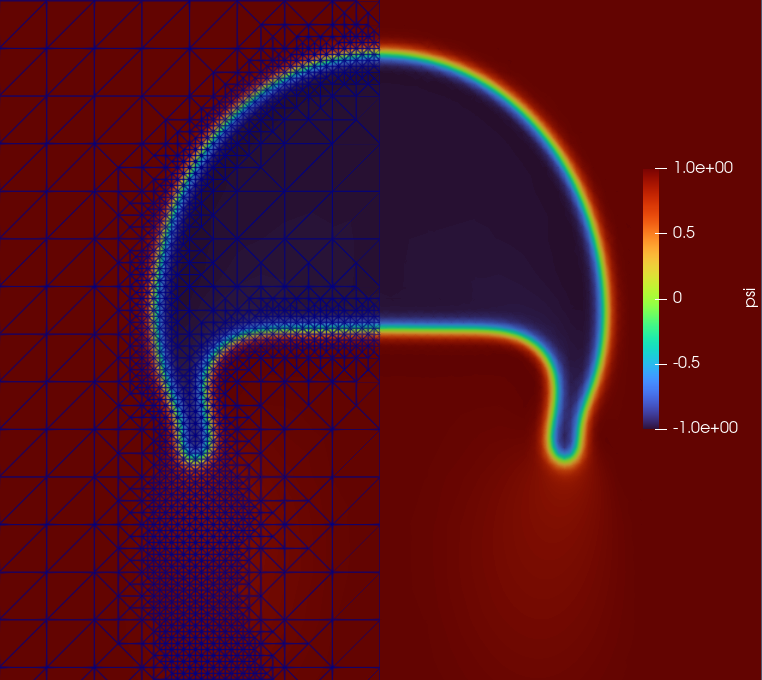}}
\subfloat[\swipL, $h_{\min} = \frac{1}{128}$\label{fig:sol42}]{\includegraphics[width=0.495\textwidth]{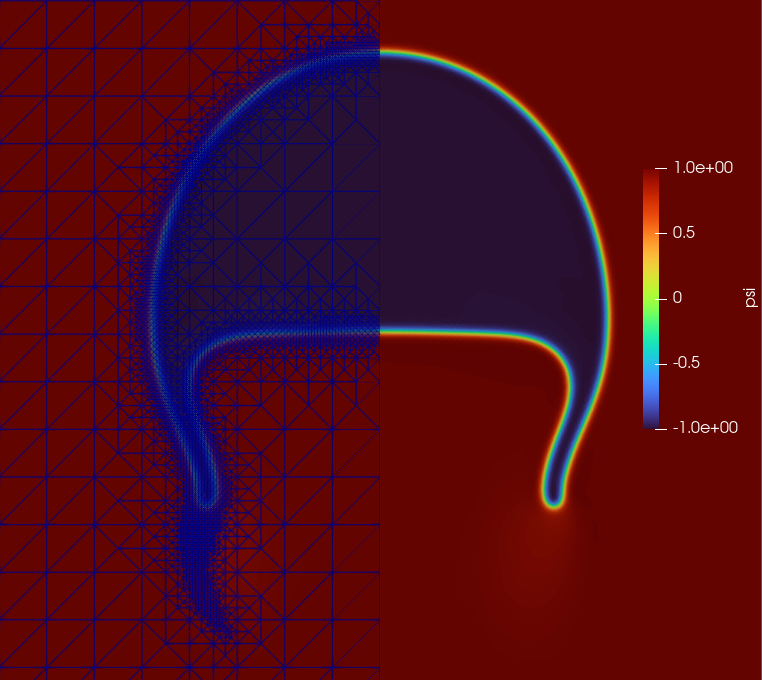}}
\caption{Final solution at $t = 3$ of the phase-field $\pf_h$ for Case 2 and the
underlying grid.} \label{fig:sol4}
\end{figure}
\begin{figure}[H]
\centering
\subfloat[Mass difference\label{fig:4mass}]{\includegraphics[width=0.49\textwidth]{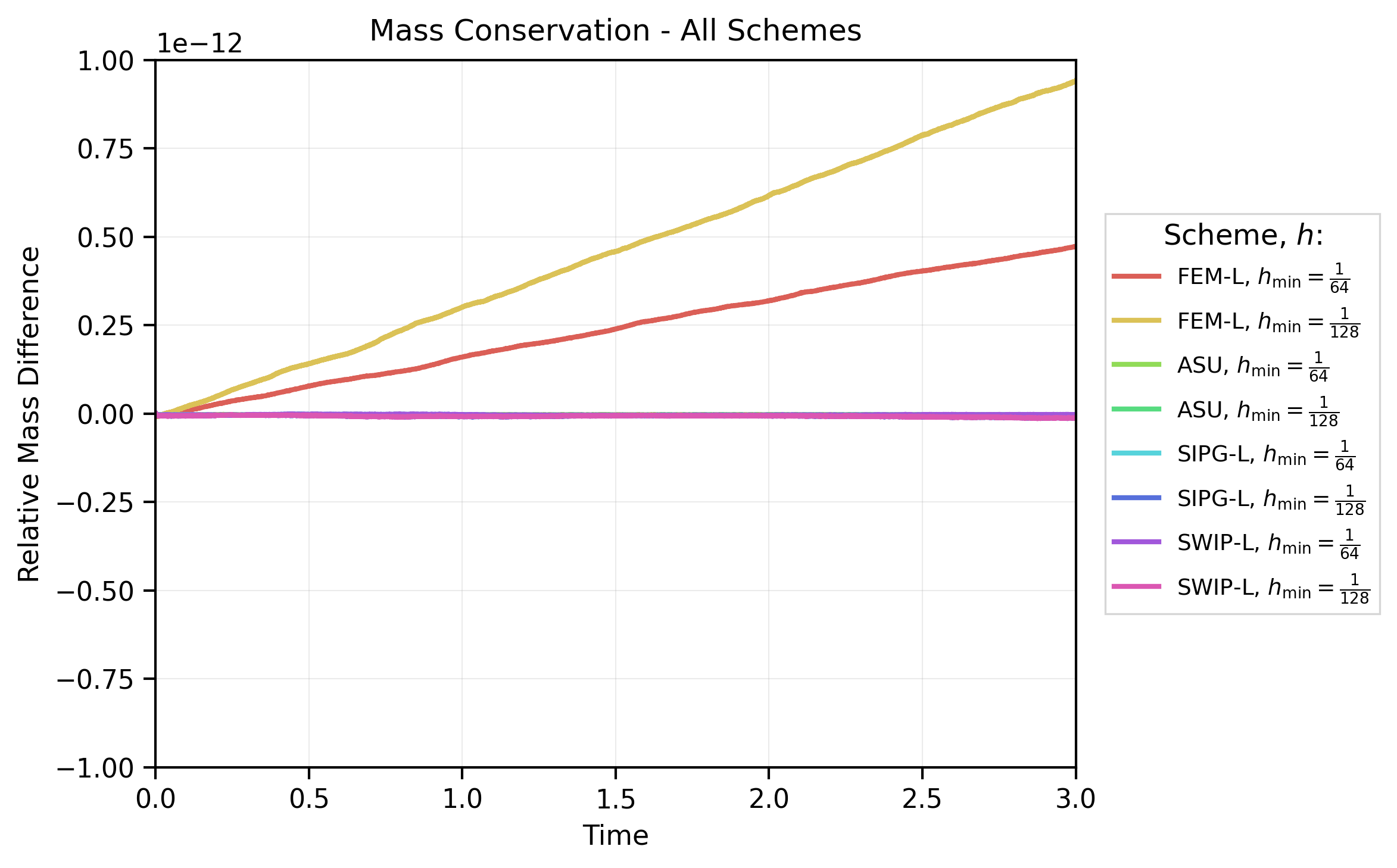}}
\subfloat[Fractional energy\label{fig:4eneg}]{\includegraphics[width=0.49\textwidth]{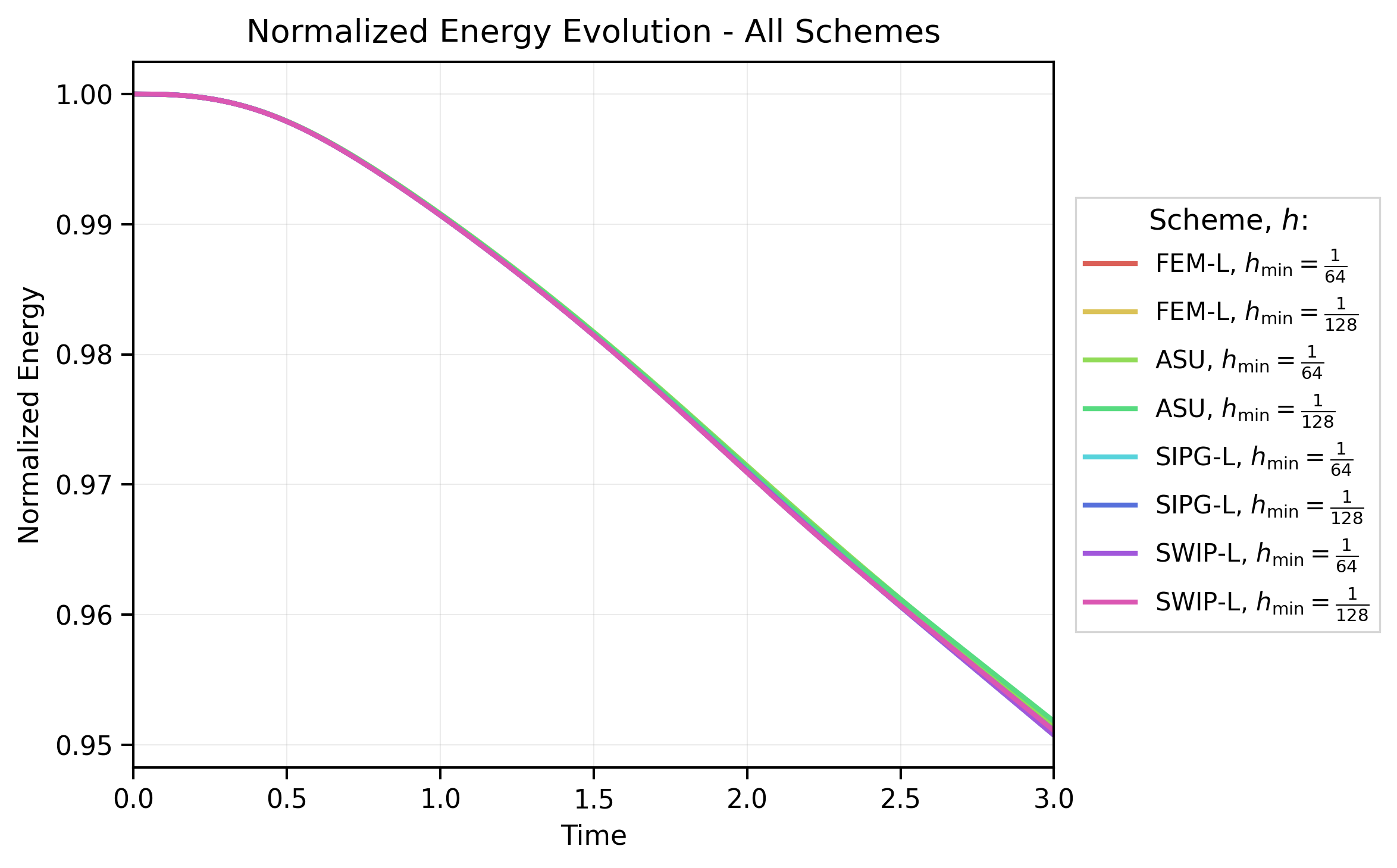}}
\hfill
\subfloat[Energy difference\label{fig:4eneg2}]{\includegraphics[width=0.49\textwidth]{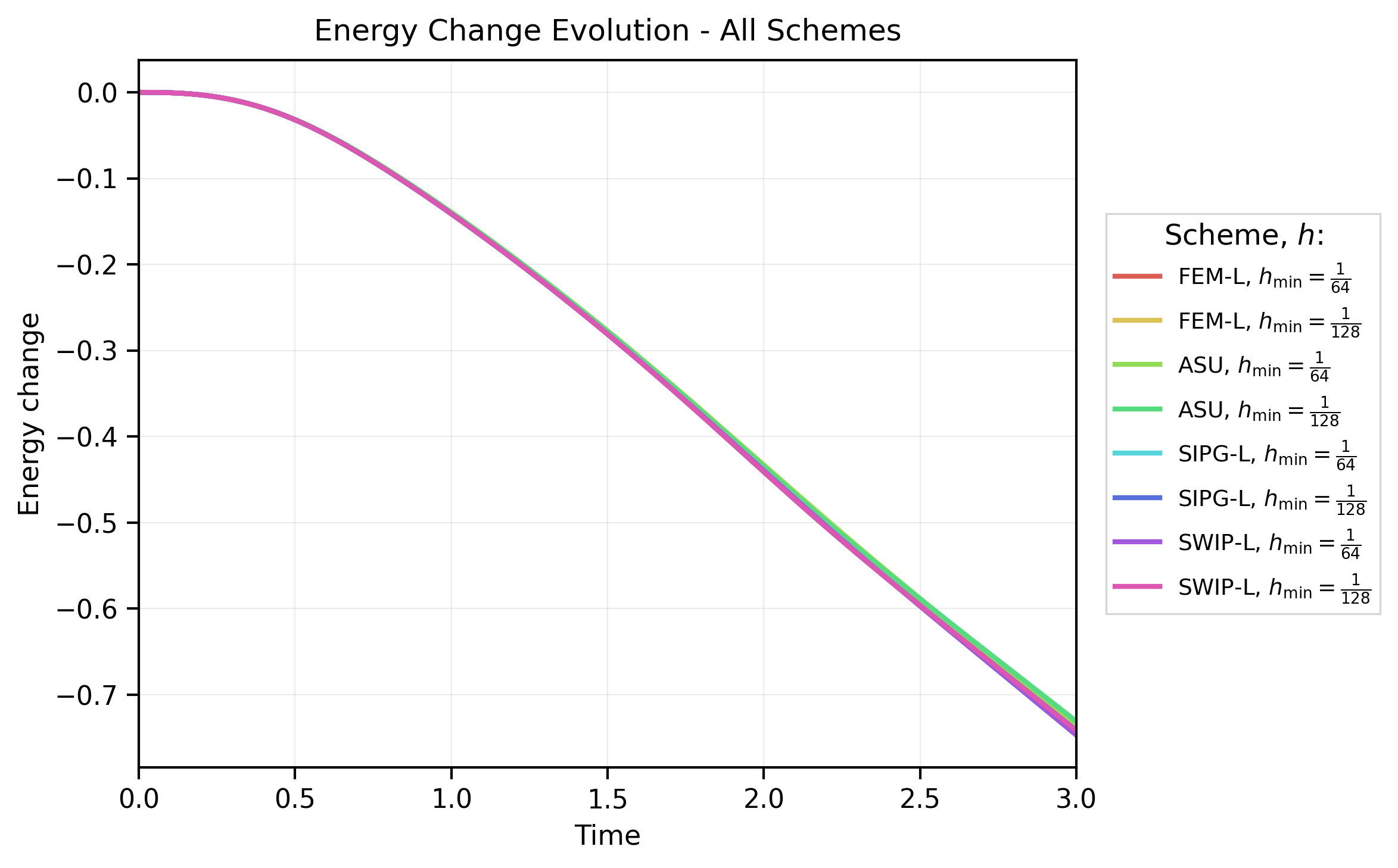}}
\caption{Physical properties for Case 2: mass conservation and energy evolution.}
\label{fig:4all}
\end{figure}

\begin{figure}[H]
\centering
\subfloat[Minima\label{fig:4min}]{\includegraphics[width=0.45\textwidth]{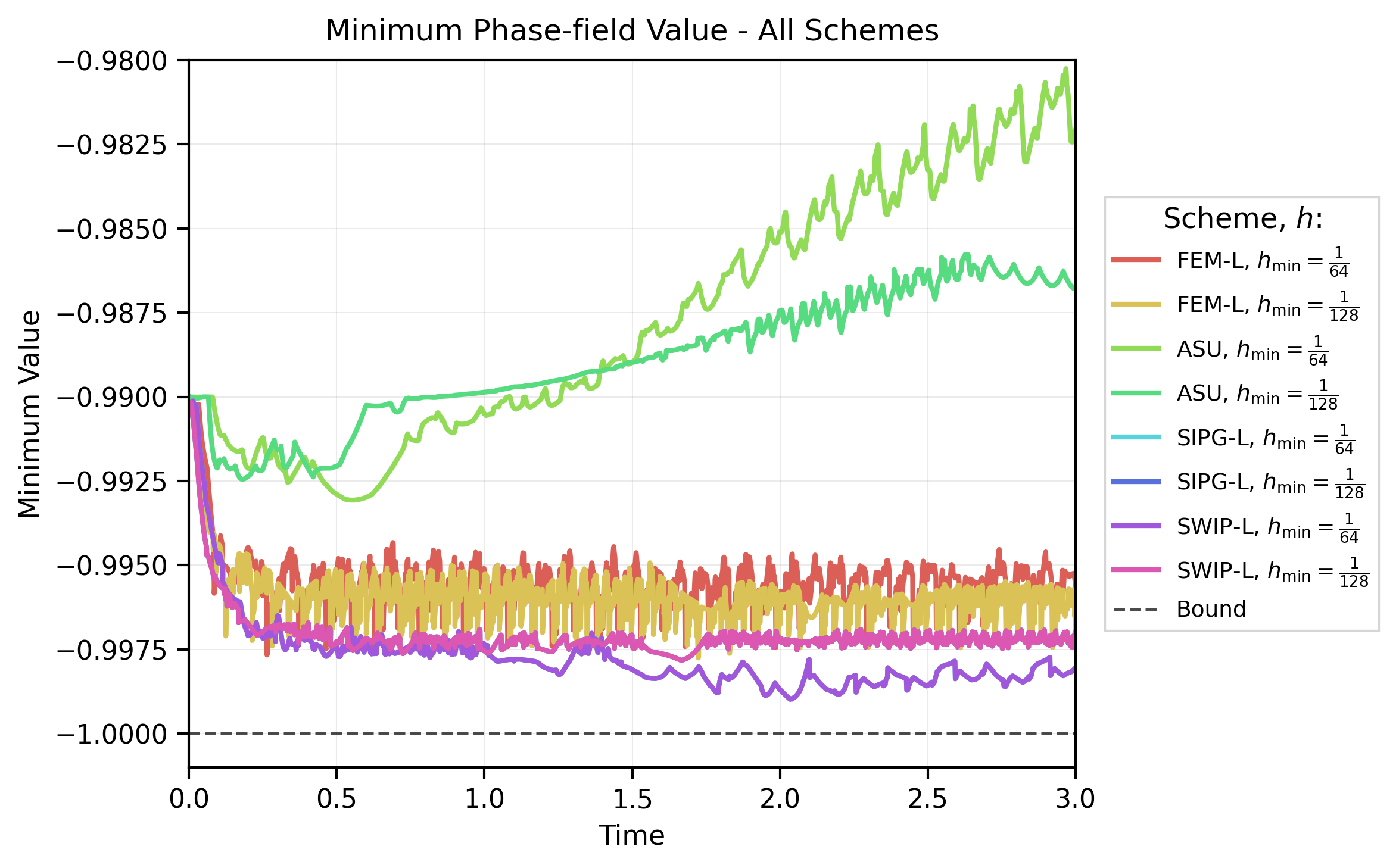}}
\subfloat[Maxima\label{fig:4max}]{\includegraphics[width=0.45\textwidth]{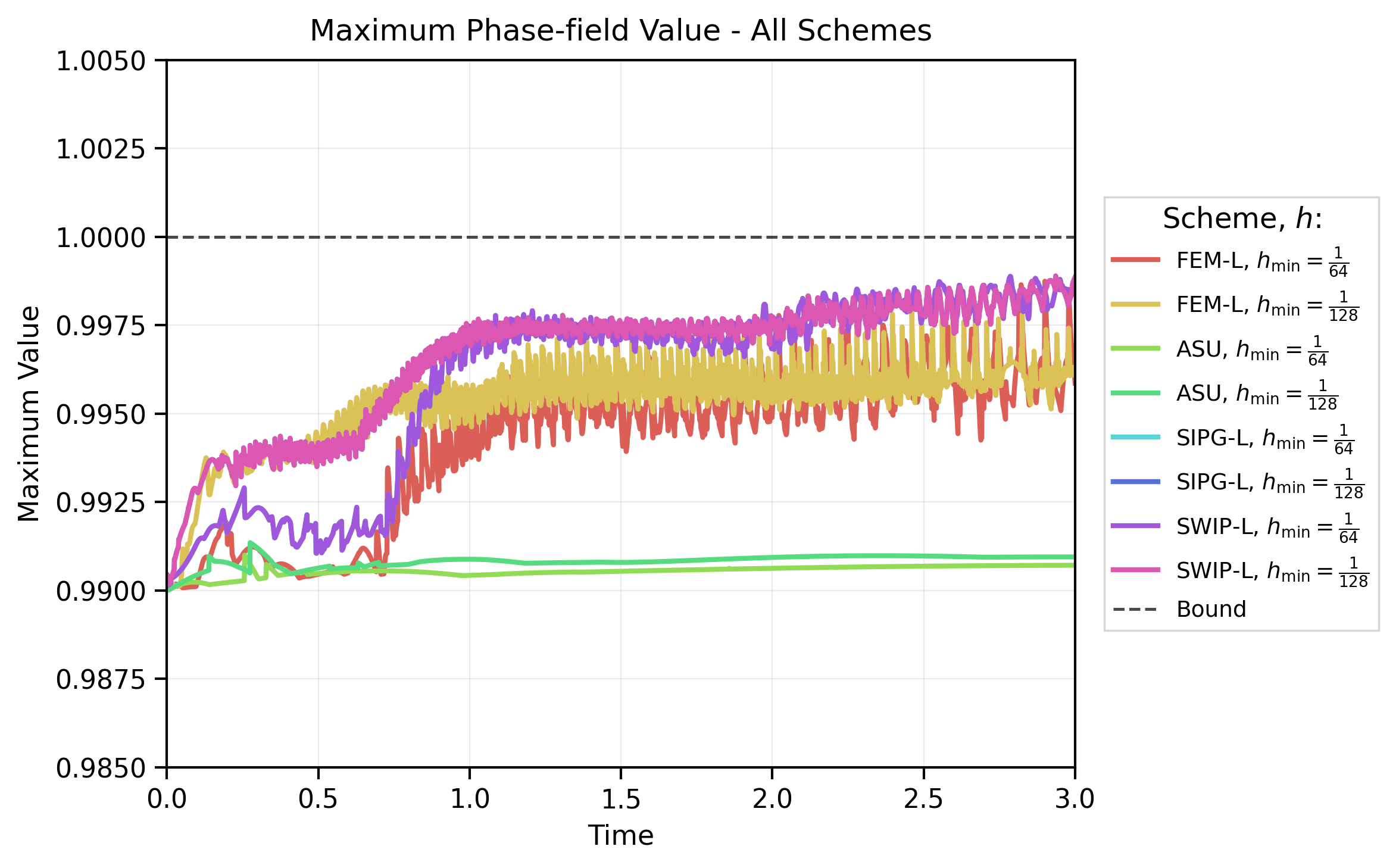}}
\caption{Minimal and maximal values of $\pf_h$ over time for Case 2.} \label{fig:bound4}
\end{figure}
\noindent The simulations using Ex.~\ref{ex:time-dependent} did not give the desireable results for the \asu scheme due to the lower bound being violated as can be seen in Fig.~\ref{fig:3min}. However, this is a modelling error found in Rem.~\ref{rem:RT}, since $ \mbu_h \not \in H_0^1(\text{div}, \grid)$ due to our use of Taylor-Hood \fem basis functions, and thus, does not satisfy the required conditions in Thm.~\ref{thm:asubd} for boundedness. This was previously discussed in Sec.~\ref{sec:discreteLevel}. \par
For both the \sipgL and \swipL schemes we obtain similar
results
as can be further supported by the final shapes in previous analysis and also based
on the results in Fig.~\ref{fig:3all} and~\ref{fig:bound3} as well as Fig.~\ref{fig:4all} and~\ref{fig:bound4}
and the very similar metrics previously reported between these schemes. We therefore
only report on the \swipL and \femL schemes in Figs.~\ref{fig:sol3}
and~\ref{fig:sol4}, both of which are very similar as can also be supported by the
energy curves presented in Figs.~\ref{fig:3eneg} and~\ref{fig:4eneg}. Moreover,
we obtained mass conservation for both test cases as illustrated in Figs.~\ref{fig:3mass}
and~\ref{fig:4mass}. Similarly to before, we see drifting mass for the \femL scheme
for both cases. This was also reported for the previous experiment and can be seen
in Fig.~\ref{fig:av2mass} and our previous analysis. \par
The bounds in Figs.~\ref{fig:bound3} and~\ref{fig:bound4} further illustrate that
the \femL, \sipgL, and \swipL schemes preserve the maximum principle due to the use
of limiters. In particular, Fig.~\ref{fig:bound3}
and~\ref{fig:bound4} shows that the simulation runs are bounded-preserving. Thus,
ensuring
that one does not necessarily require post-processing to artificially obtain
bounds for the phase field, as has been done in, for instance,~\cite{Khanwale:2022}
and other works.
\par Finally, we compare our results to the benchmark presented in~\cite{Hysing:2009}
and subsequent studies in, for instance,~\cite{Khanwale:2022,Aland:2012}. For Case
1 we find a similar result to what we found in Fig.~\ref{fig:sol3}.
As in~\cite{Brunk:2026} we do not observe the satellite droplets for Case 2
as reported in~\cite{Khanwale:2022} or results from TP2D in~\cite[Fig.1]{Hysing:2009} in this
simulation. We still obtain agreement with some of the benchmarks presented in~\cite{Hysing:2009}
and~\cite{Aland:2012} for the finer grids. \par

\section{Summary and Outlook}\label{sec:conclusions}

In this paper we presented a comparison of structure preserving numerical
schemes for the Cahn-Hilliard equations together with novel improvements for
existing Discontinuous Galerkin (DG) schemes alongside with theoretical results.

A comprehensive comparison of the considered schemes was done
with respect to energy dissipation, mass
conservation, and boundedness for different test cases ranging from pure
Cahn-Hilliard examples to coupled Cahn-Hilliard-Navier-Stokes examples.
In Tab.~\ref{tab:schemecomp} we list schemes and their behavior with
respect to the mentioned criteria and for the unknowns in Tab.~\ref{tab:scheme_expect} we highlight that Tab.~\ref{tab:schemecomp} provides numerical evidence. While energy dissipation is provided by all
schemes tested, mass conservation together with boundedness is only provided by
\asu and the limited \fem or DG schemes. Here, \asu has no clear extension to
higher order basis functions and also showed slightly worse performance than the DG
schemes. On the other hand, \asu has some clear advantages in fully coupled
settings, since no post-processing has to be done for this scheme. The \femL scheme might not be optimal
for cases with strong advective fields but certainly is a good improvement
for projects already considering a \fem based scheme for Cahn-Hilliard.
Among the DG schemes the \swipL scheme is more robust and overall
faster due to better conditioning of the resulting system matrices.
One downside of the DG schemes is the typical need for good preconditioning
methods which do not really surface in the test cases studied
in this work. All structure preserving schemes are fairly easy to implement in
frameworks based the Unified Form Language (UFL) and therefore also in other FEM
based software frameworks. Most of the presented test cases were utilizing grid
adaptation demonstrating the capabilities of the schemes in this regard.

\begin{table}[!ht]
  \renewcommand{\arraystretch}{1.5}
  \begin{center}
  \caption{Comparison of schemes with respect to energy dissipation, mass conservation,
  boundedness based on numerical evidence and extensibility to higher order approximation}
    \label{tab:schemecomp}
  \begin{tabular}{lcccc}
    Scheme &    energy dissipative & mass conservation &  boundedness  & $k>1$ \\ \hline\hline
    \fem   &          \yes         &        \yes       &     \no       &  \yes    \\
    \femC  &          \yes         &        \no        &     \yes      &  \yes    \\
    \sipg &           \yes         &        \yes       &     \no       &  \yes    \\
    \swip &           \yes         &        \yes       &     \no       &  \yes    \\\hline
    \asu   &          \yes         &        \yes       &     \maybe      &  \no     \\
    \femL  &          \yes         &        \maybe     &     \yes      &  \yes    \\
    \sipgL &          \yes         &        \yes       &     \yes      &  \yes    \\
    \swipL &          \yes         &        \yes       &     \yes      &  \yes    \\
  \end{tabular}

  \end{center}
\end{table}
\addtocounter{footnote}{-1}
\footnotetext{We did not strictly satisfy the conditions in Thm.~\ref
{thm:asubd}, and thus, the violation of boundedness present in Fig.~\ref
{fig:3min} are not from a theoretical flaw. However, as a comparison, it
is worthwhile to note that the limited schemes have numerically observed
boundedness, even though $\mbu_h \not \in H_0^1(\text{div}, \grid)$.}
\stepcounter{footnote}
\footnotetext
{Instead of the
typical oscillation we noticed a steady increase per timestep on the order of
$\mathcal{O}(10^{-16})$ which we attribute to the accumulation of floating point errors.
This will need further investigation.}

Based on the findings presented in this paper, we will utilize the \swipL or
whenever only moderate advection takes place the \femL scheme
for further studies of multiphase fluid flow.

An natural continuation of this work is the extension of \swipL to a grid- and space-adaptive approach
which should allow low order approximations in areas where the solution is constant
and high order ($k>1$) approximations where fluid interface is present.


\section*{Acknowledgements}

This work was supported by the Swedish Research Council via grant AI-Twin (2024-04904).
We thank Dr. Marco ten Eikelder from TU Darmstadt for constructive feedback
which helped to improve the manuscript. In addition we
also thank the two anonymous reviewers for their constructive feedback.

\section*{Conflict of interest}
On behalf of all authors, the corresponding author states that there is no conflict of interest.

\bibliographystyle{elsarticle-num}
\bibliography{refs.bib}

\begin{appendix}

\section{Installation}
\label{sec:installation}

  The presented software is based on the \dune release version \pyth{2.12.0.2} found at \url{https://pypi.org/project/dune-fem-dg/}.
The basic packages are installed using the Package Installer for Python (pip).
This method of installing the software has been tested on different Linux systems and latest MAC OS systems.
Installations on Windows systems require to make use of the \textit{Windows Subsystem for Linux} and,
for example, Ubuntu as an operating system.

Prerequisites for the installation are a working compiler suite (C++, C) that
  supports C++ standard 17 (i.e. \code{g++} version 10 or later or \code{clang} version 14 or later),
  \code{pkg-config}, \code{cmake}, and a working Python 3 installation of version 3.11 or later.
The code should be installed in a Python virtual environment which will
contain all the installed software and for later removal one only has to remove
the folder containing the virtual environment.
\begin{bash}
  python3 -m venv dune-venv
  source dune-venv/bin/activate

  pip install mpi4py
  pip install dune-fem-dg
\end{bash}

The implementation of the experiments can be found in a separate git repository
\begin{bash}
  git clone https://gitlab.maths.lu.se/dune/cahn-hilliard-comparison.git
\end{bash}
  The \pyth{README.md} file in that repository contains an explanation on how to run the examples presented in
  Section \ref{sec:numerics}.

\end{appendix}

\end{document}